\title{On cocycle superrigidity for Gaussian actions}
\author{Jesse Peterson}
\address{Jesse Peterson, Vanderbilt University, 1326 Stevenson Center, Nashville, TN 37240}
\email{jesse.d.peterson@vanderbilt.edu}
\author{Thomas Sinclair}
\address{Thomas Sinclair, Vanderbilt University, 1326 Stevenson Center, Nashville, TN 37240}
\email{thomas.sinclair@vanderbilt.edu}
\thanks{The first author's research is partially supported by NSF Grant 0901510 and a grant from the Alfred P. Sloan Foundation}
\subjclass{}
\keywords{}
\dedicatory{}
\newtheorem{thm}{Theorem}[section]
\newtheorem{cor}[thm]{Corollary}
\newtheorem{lem}[thm]{Lemma}
\theoremstyle{definition}
\newtheorem{prop}[thm]{Proposition}
\newtheorem{defn}[thm]{Definition}
\newtheorem{rem}[thm]{Remark}
\newcommand{\car}{\curvearrowright}
\newcommand{\dd}{\delta}
\newcommand{\G}{\Gamma}
\newcommand{\g}{\gamma}
\newcommand{\e}{\varepsilon}
\newcommand{\A}{{\mathbb A}}
\newcommand{\C}{{\mathbb C}}
\newcommand{\F}{{\mathbb F}}
\newcommand{\N}{{\mathbb N}}
\newcommand{\Z}{{\mathbb Z}}
\newcommand{\R}{{\mathbb R}}
\newcommand{\T}{{\mathbb T}}
\newcommand{\GG}{{\mathcal G}}
\newcommand{\HH}{{\mathcal H}}
\newcommand{\JJ}{{\mathcal J}}
\newcommand{\KK}{{\mathcal K}}
\newcommand{\LL}{{\mathcal L}}
\newcommand{\OO}{{\mathcal O}}
\newcommand{\UU}{{\mathcal U}}
\newcommand{\SSS}{{\mathfrak S}}
\newcommand{\Ad}{\operatorname{Ad}}
\newcommand{\Aut}{\operatorname{Aut}}
\newcommand{\id}{\operatorname{id}}
\newcommand{\ip}[2]{\langle #1, #2 \rangle}
\begin{document}
\begin{abstract}  We present a general setting to investigate $\UU_{\rm fin}$-cocycle superrigidity for Gaussian actions in terms of closable derivations on von Neumann algebras.  In this setting we give new proofs to some $\UU_{\rm fin}$-cocycle superrigidity results of S. Popa and we produce new examples of this phenomenon.  We also use a result of K. R. Parthasarathy and K. Schmidt to give a necessary cohomological condition on a group representation in order for the resulting Gaussian action to be $\UU_{\rm fin}$-cocycle superrigid.
\end{abstract}

\maketitle

\section*{Introduction}

A central motivating problem in the theory of measure-preserving actions of countable groups on probability spaces is to classify certain actions 
up to orbit equivalence, \textit{i.e.}, isomorphism of the underlying probability spaces such that the orbits of one group are carried onto the orbits of the other.  When the 
groups are amenable this problem was completely settled in the early '80s (cf. \cite{dyeI, dye, ornsteinweiss, connesfeldmanweiss}): 
all free ergodic actions of countable, discrete, amenable groups are orbit equivalent.  The nonamenable case, however, is much more complex and has recently seen 
a flourish of activity including a number of striking results.  We direct the reader to the survey articles \cite{popadefrigid, shalom} for a summary of these recent developments.

One breakthrough which we highlight here is Popa's use of his deformation/rigidity techniques in von Neumann algebras to produce 
rigidity results for orbit equivalence (cf. \cite{popabernoulli, popacohomology, popamalleableI, popamalleableII, popasuperrigid, popaspectralgap, popasasyk}).  
One of the seminal results using these techniques is Popa's Cocycle Superrigidity Theorem \cite{popasuperrigid, popaspectralgap} (see also \cite{furmancsr} 
and \cite{vaes} for more on this) which obtains orbit equivalence superrigidity results by means of untwisting cocycles into a finite von Neumann algebra.  
In order to state this result we recall a few notions regarding groups.

A subgroup $\G_0 < \G$ is \textbf{w-normal} if there is a chain of subgroups $\G_0 = \G_1 < \G_2 < \dots < \G_\beta = \G$ for some ordinal $\beta$ such that $(\bigcup_{\alpha' < \alpha} \G_{\alpha'}) \lhd \G_\alpha$ for all $\alpha \leq \beta$.  A group $\G$ is \textbf{w-rigid} if there exists an infinite w-normal subgroup $\G_0$ such that the pair $(\G, \G_0)$ has relative property (T).
If $\UU$ is a class of Polish groups then a free, ergodic, measure-preserving action of a countable discrete group $\G$ on a standard probability space $(X, \mu)$
is said to be \textbf{$\UU$-cocycle superrigid} if any cocycle for the action $\G \car (X, \mu)$ which is valued in a group contained in the class $\UU$ must 
be cohomologous to a homomorphism.
$\UU_{\mathrm {fin}}$ is used to denote the class of Polish groups which arise as closed subgroups of the unitary groups of $\mathrm {II}_1$ factors. In particular, the class of compact Polish groups and the class of countable discrete groups are both contained in $\UU_{\mathrm {fin}}$. The notions of w-normality, w-rigidity, and the class $\UU_{\mathrm {fin}}$ are due to Popa (cf. \cite{popacohomology, popasuperrigid}).  
\\

\noindent
{\bf Popa's Cocycle Superrigidity Theorem} (\cite{popasuperrigid}, \cite{popaspectralgap}) (for Bernoulli shift actions). Let $\G$ be a group which is either w-rigid or contains a w-normal subgroup which is the direct product of an infinite group and a nonamenable group, and let $(X_0, \mu_0)$ be a standard probability space.  Then the Bernoulli shift action 
$\G \car \Pi_{g \in \G} (X_0,\mu_0)$ is $\UU_{\mathrm {fin}}$-cocycle superrigid. \\

The proof of this theorem uses a combination of deformation/rigidity and intertwining techniques that were initiated in \cite{popabetti}.
Roughly, if we are given a cocycle into a unitary group of a $\mathrm {II}_1$ factor, we may consider the ``twisted''
group algebra sitting inside of the group-measure space construction.  The existence of rigidity can then be contrasted against natural malleable deformations
from the Bernoulli shift in order to locate the ``twisted'' algebra inside of the group-measure space construction.  Locating the ``twisted'' algebra allows us 
to ``untwist'' it, and, in so doing, untwist the cocycle in the process.

The existence of such s-malleable deformations (introduced by Popa in \cite{popamalleableI, popamalleableII}) actually occurs in a broader setting than the (generalized) Bernoulli shifts with diffuse core, but it was Furman \cite{furmancsr} who first noticed that the even larger class of Gaussian actions are also s-malleable.  The class of Gaussian actions has a rich structure, owing to the fact the every Gaussian action of a group $\G$ arises functorially from an orthogonal representation of $\G$.  The interplay between the representation theory and the ergodic theory of a group via the Gaussian action has been fruitfully exploited in the literature (cf. the seminal works of Connes \& Weiss and of Schmidt, 
\cite{connesweiss, schmidtamen, schmidtcohomology}, \textit{inter alios}).

In this paper, we will explore $\UU_{\mathrm {fin}}$-cocycle superrigidity within the class of Gaussian actions.  An advantage to our approach is that we develop a general framework for investigating cocycle superrigidity of such actions by using derivations on von Neumann algebras.  The first theme we take up is the relation between the cohomology of group representations and the cohomology of their respective Gaussian actions.
Under general assumptions, we show that cohomological information coming from the representation can be faithfully transferred to the cohomology group of the action with coefficients in the circle group $\T$.  As a consequence, we obtain our first result, that the cohomology of the representation provides an obstruction to the $\UU_{\mathrm {fin}}$-cocycle superrigidity of the associated Gaussian action.

\begin{thm}
Let $\G$ be a countable discrete group and $\pi: \G \rightarrow \OO(\KK)$ a weakly mixing orthogonal representation.
A necessary condition for the corresponding Gaussian action to be $\{ \T \}$-cocycle superrigid is for   
$H^1(\G, \pi) = \{ 0 \}$.
\label{thm:1}
\end{thm}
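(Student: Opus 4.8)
The plan is to argue by contraposition: assuming $H^1(\G,\pi)\neq\{0\}$, I will produce a $\T$-valued cocycle for the Gaussian action that is not cohomologous to a homomorphism. First, fix a cocycle $b\in Z^1(\G,\pi)$, so $b(gh)=b(g)+\pi(g)b(h)$, whose class is nonzero, i.e. $b(g)\neq\eta-\pi(g)\eta$ for every $\eta\in\KK$. Let $(X,\mu)$ be the Gaussian probability space of $\KK$, with $\omega\colon\KK\to L^2_{\R}(X,\mu)$ the canonical real-linear Gaussian embedding onto the first chaos, and let $\sigma$ be the associated Gaussian action, characterized by $\sigma_g(\omega(\xi))=\omega(\pi(g)\xi)$; note that $\sigma$ is ergodic, indeed weakly mixing, because $\pi$ is. Set $w_g:=\exp(i\,\omega(b(g)))\in L^\infty(X,\mu;\T)$. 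Since $\omega$ is additive and $\sigma_g\circ\omega=\omega\circ\pi(g)$,
\[
w_g\,\sigma_g(w_h)=\exp\bigl(i\,\omega(b(g))\bigr)\,\exp\bigl(i\,\omega(\pi(g)b(h))\bigr)=\exp\bigl(i\,\omega(b(gh))\bigr)=w_{gh},
\]
so $w$ is a $\T$-valued cocycle for $\sigma\car(X,\mu)$. It then suffices to show that $w$ is \emph{not} cohomologous to any homomorphism $\G\to\T$.

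Here I would invoke the Parthasarathy--Schmidt analysis of cocycles of Gaussian actions; concretely, the mechanism is the following. Suppose for contradiction that $w_g=\rho(g)\,\varphi\,\overline{\sigma_g(\varphi)}$ for a homomorphism $\rho\colon\G\to\T$ and some $\varphi\in L^\infty(X,\mu;\T)$. The extra structure to exploit on a Gaussian space is the group of Cameron--Martin translations $T_h$ ($h\in\KK$): each $T_h$ is a $*$-automorphism of $L^\infty(X,\mu)$ preserving the measure class, it acts on first-chaos exponentials by the scalar $T_h(\exp(i\,\omega(\xi)))=e^{i\ip{\xi}{h}}\exp(i\,\omega(\xi))$, and it normalizes the Gaussian action via $T_h\circ\sigma_g=\sigma_g\circ T_{\pi(g^{-1})h}$. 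Applying $T_h$ to the hypothesised identity and dividing by the original equation produces the scalar $e^{i\ip{b(g)}{h}}$ on one side, giving, after rearranging, $\varphi_h=e^{i\ip{b(g)}{h}}\,\sigma_g(\varphi_{\pi(g^{-1})h})$ for all $g,h$, where $\varphi_h:=T_h(\varphi)\overline{\varphi}$. Integrating against $\mu$ — which $\sigma_g$ preserves — the continuous function $F(h):=\ip{T_h\varphi}{\varphi}_{L^2(X,\mu)}$, with $F(0)=1$ and $|F|\le1$, satisfies
\[
F(h)=e^{i\ip{b(g)}{h}}\,F\bigl(\pi(g^{-1})h\bigr)\qquad(g\in\G,\ h\in\KK).
\]
In particular $|F|$ is $\pi$-invariant, and — this is the step that uses weak mixing of $\pi$, to kill the finite-dimensional pieces on which $F$ could degenerate — the Parthasarathy--Schmidt structure theory forces $F$ to be zero-free with $\log F=i\ip{\eta}{\cdot}+r$ for some $\eta\in\KK$ and some $\pi$-invariant $r$. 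Comparing phases in the displayed equation then yields $\ip{b(g)}{h}=\ip{\eta-\pi(g)\eta}{h}$ for all $h$, so $b(g)=\eta-\pi(g)\eta$, contradicting the choice of $b$ (and one reads off $\rho\equiv1$ as well).

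The hard part is exactly this structural step: extracting from an essentially arbitrary measurable $\T$-valued transfer function $\varphi$ a \emph{genuine} coboundary $\eta\in\KK$ — an element of the (un-reduced) $B^1$, not merely a limit of coboundaries — which amounts to the rigidity of the first Wiener chaos inside $L^\infty(X,\mu)$ that the Parthasarathy--Schmidt result provides, with weak mixing forcing the residual $\pi$-invariant contributions to $r$ to drop out of $b$ and pinning down $\rho$; an added technical nuisance is that the Cameron--Martin translations are only quasi-measure-preserving, so $T_h$ is not unitary on $L^2(X,\mu)$ and one must stay inside $L^\infty$ when manipulating $\varphi$. An alternative route, closer to the derivation-theoretic framework of this paper, is to apply the closable derivation on $L^\infty(X,\mu)$ canonically attached to $\pi$ to the relation $\varphi\,\overline{\sigma_g(\varphi)}=\rho(g)^{-1}\exp(i\,\omega(b(g)))$, rewrite the outcome as $1\otimes b(g)=\Xi-(\sigma_g\otimes\pi(g))\,\Xi$ for a suitable $\Xi$, and project onto the zeroth chaos $\C\otimes\KK\cong\KK$ to present $b$ as the coboundary of $\int\Xi\,d\mu\in\KK$; there the delicate point is instead showing that $\varphi$ belongs to the domain of the closed derivation, which is precisely what the closable-derivations setup is designed to control.
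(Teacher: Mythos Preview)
Your approach is genuinely different from the paper's, and the step you yourself flag as ``the hard part'' is a real gap.

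The paper does not analyze the transfer function $\varphi$ at all, neither via Cameron--Martin translations nor via derivations. Instead it argues as follows: if $w$ untwists, so that $u\,\omega_\gamma\,\sigma_\gamma(u^*)\in\T$ for some unitary $u\in A$, then any $\G$-equivariant completely positive deformation of $A$ must converge uniformly in $\|\cdot\|_2$ on the set $\{\omega_\gamma:\gamma\in\G\}$; this is a soft consequence of asymptotic $A$-bimodularity and needs nothing about the internal structure of $\varphi$. One then plugs in the s-malleable Gaussian deformation $\alpha_t$ and computes directly
\[
\langle\alpha_{2t/\pi}(\omega_\gamma\otimes 1),\,\omega_\gamma\otimes 1\rangle=\exp\bigl(-(1-\cos t)\,\|b(\gamma)\|^2\bigr),
\]
which converges uniformly in $\gamma$ if and only if $b$ is bounded, i.e.\ a coboundary. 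That is the entire proof.

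By contrast, your route tries to \emph{recover} a specific $\eta\in\KK$ from $\varphi$, and the structural claim about $F(h)=\int T_h(\varphi)\overline\varphi\,d\mu$ is not justified. You assert that weak mixing forces $F$ to be zero-free with $\log F=i\langle\eta,\cdot\rangle+r$ for $\pi$-invariant $r$, but weak mixing of $\pi$ concerns finite-dimensional subrepresentations on $\KK$; it says nothing about $\pi$-invariant continuous functions on $\KK$ (e.g.\ $\|\cdot\|^2$ is always $\pi$-invariant), so it does not by itself pin down $|F|$ or force $F$ to be nonvanishing. For a general $\varphi\in L^\infty(X;\T)$ the function $F$ need not be differentiable at $0$, so one cannot simply read $\eta$ off from $\nabla F(0)$ either. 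In the paper the name Parthasarathy--Schmidt is attached only to the \emph{construction} of $w$ from $b$, not to a structure theorem of the kind you invoke; if you mean a specific result you should state it precisely and check it delivers an honest $\eta\in\KK$ rather than an approximate coboundary. Your alternative derivation-theoretic sketch has exactly the obstacle you name --- there is no reason an arbitrary measurable $\varphi$ lies in the domain of the closed derivation --- and the virtue of the paper's argument is precisely that it works with the integrated deformation $\alpha_t$, where domain issues disappear, rather than with its generator.
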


The Bernoulli shift action of a group is precisely the Gaussian action corresponding to the 
left-regular representation, and the circle group $\T$ is contained in the class $\UU_{\rm fin}$.  When combined with Corollary 2.4 in \cite{petersonthom} which states that for a nonamenable group vanishing of the first $\ell^2$-Betti number is equivalent to $H^1(\G, \lambda) = \{ 0 \}$ we obtain the following corollary.

\begin{cor}
Let $\G$ be a countable discrete group.  If $\beta_1^{(2)}(\G) \not= 0$ then the Bernoulli shift action is not
$\UU_{\rm fin}$-cocycle superrigid.
\end{cor}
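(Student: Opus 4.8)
The plan is to deduce the corollary from Theorem~\ref{thm:1} applied to the left-regular representation $\lambda\colon\G\to\OO(\ell^2(\G,\R))$. As recalled just before the statement, the Bernoulli shift action of $\G$ is, as a measure-preserving action, the Gaussian action associated with $\lambda$; and the circle group $\T$ --- realized, say, as the scalar unitaries inside any $\mathrm{II}_1$ factor --- lies in $\UU_{\rm fin}$, so that $\UU_{\rm fin}$-cocycle superrigidity of an action implies its $\{\T\}$-cocycle superrigidity. Hence it suffices to prove that the Gaussian action of $\lambda$ is \emph{not} $\{\T\}$-cocycle superrigid, and by the contrapositive of Theorem~\ref{thm:1} this reduces to checking its two hypotheses: that $\lambda$ is weakly mixing and that $H^1(\G,\lambda)\ne\{0\}$.

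First I would observe that $\beta_1^{(2)}(\G)\ne 0$ forces $\G$ to be nonamenable (in particular infinite): a finite group has $\beta_1^{(2)}=0$, while every infinite amenable group has all $\ell^2$-Betti numbers equal to $0$ by the Cheeger--Gromov vanishing theorem. Since $\G$ is infinite, the matrix coefficients $g\mapsto\ip{\lambda_g\xi}{\eta}$ lie in $c_0(\G)$ for all $\xi,\eta\in\ell^2(\G)$, so $\lambda$ is mixing, hence weakly mixing. Since $\G$ is nonamenable, Corollary~2.4 of \cite{petersonthom} applies and identifies the vanishing of $H^1(\G,\lambda)$ with the vanishing of $\beta_1^{(2)}(\G)$; as the latter is nonzero by hypothesis, $H^1(\G,\lambda)\ne\{0\}$. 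Theorem~\ref{thm:1} now yields that the Gaussian action of $\lambda$, that is, the Bernoulli shift action, is not $\{\T\}$-cocycle superrigid, and therefore not $\UU_{\rm fin}$-cocycle superrigid.

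I do not expect a genuine obstacle here: the corollary is a formal consequence of Theorem~\ref{thm:1} together with \cite{petersonthom}. The only points needing a word of justification are the elementary facts that $\beta_1^{(2)}(\G)\ne 0$ implies nonamenability (so that \cite[Cor.~2.4]{petersonthom} is applicable), that $\lambda$ is weakly mixing when $\G$ is infinite, and the classical identification of the Bernoulli shift with the Gaussian action of $\lambda$ --- all of which are standard.
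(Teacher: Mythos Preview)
Your argument is correct and follows essentially the same route as the paper: identify the Bernoulli shift with the Gaussian action of the left-regular representation, note $\T\in\UU_{\rm fin}$, and invoke \cite[Cor.~2.4]{petersonthom} together with Theorem~\ref{thm:1}. You are simply more explicit than the paper in verifying the side conditions (nonamenability via Cheeger--Gromov, weak mixing of $\lambda$), which is fine.
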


The second theme explored is the deformation/derivation duality developed by the first author in \cite{petersonl2}.  The flexibility inherent at the infinitesimal level allows us to offer a unified treatment of Popa's theorem in the case of generalized Bernoulli actions and expand the class of groups whose Bernoulli actions are known to be $\UU_{\mathrm {fin}}$-cocycle superrigid. As a partial converse to the above results, we have that an \textit{a priori} stronger property than having $\beta_1^{(2)}(\G) = 0$, $L^2$-rigidity (see Definition \ref{defn:l2rigid}), is sufficient to guarantee $\UU_{\mathrm {fin}}$-cocycle superrigidity of the Bernoulli shift.

\begin{thm} 
Let $\G$ be a countable discrete group.  If $L\G$ is $L^2$-rigid then the Bernoulli shift action of $\G$ is $\UU_{\rm fin}$-cocycle superrigid.
\end{thm}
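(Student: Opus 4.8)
The plan is to run the deformation/rigidity argument underlying Popa's Cocycle Superrigidity Theorem, with the $L^2$-rigidity of $L\G$ playing the role of the relative property (T) (or the product structure) used in the original proof. First I would reduce and set up the twisted algebra. Since $\UU_{\rm fin}$ consists of closed subgroups of unitary groups of $\mathrm{II}_1$ factors and a cocycle of a countable group on a standard space is separably valued, it suffices to prove that every cocycle $w\colon\G\times X\to\UU(N)$ for the Bernoulli action $\G\car(X,\mu)$, with $N$ a separable $\mathrm{II}_1$ factor, is cohomologous to a homomorphism $\G\to\UU(N)$. Put $\mathcal M:=(L^\infty(X)\rtimes\G)\bar\otimes N=(L^\infty(X)\bar\otimes N)\rtimes_{\sigma\otimes\id}\G$, where $\sigma$ is the Bernoulli action and $(u_g)$ the implementing unitaries, and form the twisted unitaries $v_g:=w(g,\cdot)(u_g\otimes 1)\in\mathcal M$. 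The cocycle relation makes $g\mapsto v_g$ a homomorphism, so $P:=\{v_g:g\in\G\}''$ is a copy of $L\G$ inside $\mathcal M$, and $\mathcal M$ is generated by $L^\infty(X)\bar\otimes N$ together with $P$. A routine unwinding shows that $w$ is cohomologous to a homomorphism precisely when $P$ can be conjugated, by a unitary of $L^\infty(X)\bar\otimes N$, onto a copy $\{\rho(g)(u_g\otimes 1):g\in\G\}''$ with $\rho\colon\G\to\UU(N)$ a homomorphism; this is what I would aim to produce.

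Next I would bring in the deformation coming from the Gaussian structure. The Bernoulli action is the Gaussian action of the left-regular representation, hence is s-malleable: doubling the base gives an inclusion $\mathcal M\subseteq\widetilde{\mathcal M}:=(L^\infty(\widetilde X)\rtimes\G)\bar\otimes N$ together with automorphisms $(\alpha_t)$ of $\widetilde{\mathcal M}$ and a period-two automorphism $\beta$ satisfying the usual s-malleability relations, with $\alpha_t\to\id$ pointwise in $\|\cdot\|_2$ and $\alpha_t$ fixing $1\otimes N$ and the untwisted group algebra $L\G\otimes 1$; equivalently (the deformation/derivation duality of \cite{petersonl2}) one has an associated closable real derivation into $\mathcal H:=L^2(\widetilde{\mathcal M})\ominus L^2(\mathcal M)$. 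The key point to verify is that, restricted to $P$, this is a deformation with coarse coefficients: the bimodule $\mathcal H$ is assembled from the symmetric Fock space of $\ell^2\G$, and the Fell absorption principle exhibits it as a multiple of the coarse $L\G$-bimodule; since $P$ is generated by $L^\infty(X)\bar\otimes N$ and the $v_g$, the same computation goes through over $P$ rather than over $L\G\otimes 1$, the twist by the unitary cocycle $w$ being harmless. One checks likewise that $L^2(\mathcal M)\ominus L^2(P)$ is coarse over $P$, so the derivation of the deformation takes values in a $P$-bimodule coarse over $P$.

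With this in place, $L^2$-rigidity does the work: since $L\G\cong P$ is $L^2$-rigid and the deformation has coarse coefficients, it must converge uniformly on the unit ball, that is $\sup_{x\in(P)_1}\|\alpha_t(x)-x\|_2\to 0$ as $t\to0$ (passing from uniform convergence of the heat semigroup $e^{-t\delta^{*}\delta}$ to that of $\alpha_t$ being part of the deformation/derivation package). From here the argument is Popa's untwisting endgame: uniform convergence on $(P)_1$, combined with the transversality inequality for the s-malleable deformation, forces $P\prec_{\widetilde{\mathcal M}}\mathcal M$, which — the deformation being by automorphisms — upgrades to a unitary conjugacy of $P$ into $\mathcal M$; analyzing the position of this conjugate copy of $L\G$ inside $\mathcal M=(L^\infty(X)\bar\otimes N)\rtimes\G$ by means of the weak mixing of the Bernoulli action then yields a unitary $\phi\in\UU(L^\infty(X)\bar\otimes N)$ with $\phi v_g\phi^*\in\UU(N)(u_g\otimes 1)$ for all $g$. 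Setting $\rho(g):=\phi v_g\phi^*(u_g\otimes 1)^*$ gives a homomorphism $\G\to\UU(N)$ to which $w$ is cohomologous, and since $w$ was valued in a fixed closed subgroup of $\UU(N)$ so is $\rho$, completing the proof.

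The step I expect to be the main obstacle is the second one: verifying that the concrete deformation of the cocycle-twisted algebra $\mathcal M$, restricted to $P$, genuinely falls under the hypothesis of $L^2$-rigidity — i.e. that its coefficient bimodule is (quasi-contained in) the coarse bimodule of $P$ even after the twist by $w$ — and then feeding the resulting uniform convergence of $\alpha_t$ on $(P)_1$ into Popa's machinery exactly as in the rigid case. Granting this, the remaining ingredients are either routine s-malleable deformation bookkeeping or Popa's untwisting argument \cite{popasuperrigid}, which requires no essential change once uniform convergence is available.
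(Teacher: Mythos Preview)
Your proposal follows the same overall strategy as the paper: form the twisted copy $P\cong L\G$ inside the crossed product, check via Fell absorption that the coefficient bimodule of the relevant derivation is coarse as a $P$-$P$ bimodule, apply $L^2$-rigidity to get uniform convergence of the associated deformation on $(P)_1$, and then feed this into Popa's untwisting machinery. The only substantive difference is in the choice of derivation. You work with the infinitesimal generator of the Gaussian s-malleable deformation itself, with target $L^2(\widetilde{\mathcal M})\ominus L^2(\mathcal M)$; the paper instead constructs a bespoke derivation $\delta_\beta$ (a tensor product of difference-quotient derivations, Section~\ref{sec:derivations}) whose target bimodule $\KK=\HH\otimes\SSS(\HH')\otimes L^2M$ has the left $A$-action acting \emph{trivially} on the $\HH$-factor. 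This is what makes the Fell absorption step clean: since $\omega_\g\in A\overline\otimes M$, the twist leaves the $\HH$-factor alone and one reads off $\Ad(\omega_\g)\circ\tilde\pi_\g=\pi_\g\otimes(\Ad(\omega_\g)\circ\pi^\SSS_\g)$ with $\pi$ the left-regular representation. Uniform convergence of the resolvent $\zeta_\alpha$ is then transferred to the s-malleable deformation via Lemma~\ref{lem:deformation}. Your route should also work, but the decomposition needed to isolate an untwisted $\ell^2\G$-factor inside $L^2(\widetilde{\mathcal M})\ominus L^2(\mathcal M)$ is a bit less transparent.

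One point to clean up: your description of the endgame is muddled. The assertion $P\prec_{\widetilde{\mathcal M}}\mathcal M$ is vacuous since $P\subset\mathcal M$ already, and neither transversality nor intertwining-by-bimodules enters here. Once uniform convergence of the s-malleable deformation on the twisted unitaries $\{v_g\}$ is in hand, Theorem~3.2 of \cite{popasuperrigid} produces the untwisting unitary directly from the s-malleable structure and weak mixing --- this is exactly what the paper invokes, and it is what you should cite in place of the intertwining detour. Your extra claim that $L^2(\mathcal M)\ominus L^2(P)$ is coarse over $P$ is likewise not needed.
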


Examples of groups for which this holds are groups which contain an infinite normal subgroup which has relative property (T) or is the direct product of an infinite group
and a nonamenable group, recovering Popa's Cocycle Superrigidity Theorem for Bernoulli actions of these groups. 

We also obtain new groups for which Popa's theorem holds.  For example, we show that the theorem holds for any
generalized wreath product $A_0 \wr_X \G_0$, where $A_0$ is a non-trivial abelian group and $\G_0$ does not have the Haagerup property.
Also, if $L\Lambda$ is nonamenable and has property $(\G)$ of Murray and von Neumann \cite{mvn4} then the theorem also holds for $\Lambda$.

We remark that it is still an open question whether vanishing of the first $\ell^2$-Betti number characterizes groups whose Bernoulli actions are $\UU_{\rm fin}$-cocycle superrigid.  For instance, it is still unknown for the group $\Z \wr \F_2$, which contains an infinite normal abelian subgroup and hence has vanishing first $\ell^2$-Betti number by \cite{cheegergromov}.

The authors would like to thank Sorin Popa for useful discussions regarding this work.

\section{Preliminaries}  We begin by reviewing of the basic notions of Gaussian actions, cohomology of representations and actions, and closable derivations. Though our treatment of the last two topics is standard, our approach to Gaussian actions is somewhat non-standard, where we take a more operator-algebraic approach by viewing the algebra of bounded functions on the probability space as a von Neumann algebra acting on a symmetric Fock space.  In the noncommutative setting of free probability this is the same as Voiculescu's approach in \cite{voiculescusymmetries}.
But first, let us recall a few basic definitions and concepts which constitute the basic language in which this paper is written.  Throughout, all Hilbert spaces are assumed to be separable.

\begin{defn}  Let $\pi: \G \rightarrow \UU(\HH)$ be a unitary representation, and denote by $\pi^{op}$ the associated contragradient representation on the contragradient Hilbert space $\HH^{op}$ of $\HH$.  We say that $\pi$:
\begin{enumerate}
	\item is \textbf{ergodic} if $\pi$ has no non-zero invariant vectors;
	
	\item is \textbf{weakly mixing} if $\pi \otimes \pi^{op}$ is ergodic (equivalently, $\pi \otimes \rho^{op}$ is ergodic for any unitary $\G$-representation $\rho$);
	
	\item is \textbf{mixing} if $\ip{\pi_\g(\xi)}{\eta} \rightarrow 0$ as $\g \rightarrow \infty$, for all $\xi, \eta \in \HH$;
	
	\item has \textbf{spectral gap} if there exists $K \subset G$, finite, and $C >0$ such that $\|\xi - P(\xi)\| \leq C \sum_{k \in K}\|\pi_k(\xi) -\xi\|$, for all $\xi \in \HH$, where $P$ is the projection onto the invariant vectors;

	\item has \textbf{stable spectral gap} if $\pi \otimes \pi^{op}$ has spectral gap (equivalently, $\pi \otimes \rho^{op}$ has spectral gap for any unitary $\G$-representation $\rho$);
	
	\item is \textbf{amenable} if $\pi$ is either not weakly mixing or does not have stable spectral gap.
	
\end{enumerate}
\label{def:ergodic}
\end{defn}

Note that for an orthogonal representation $\pi$ of $\G$ into a real Hilbert space $\KK$, the associated unitary representation into $\KK \otimes \C$ is canonically isomorphic to its contragradient. Hence, in this situation we may replace in the above definition ``$\pi \otimes \pi^{op}$'' and ``$\pi \otimes \rho^{op}$'' with ``$\pi \otimes \pi$'' and ``$\pi \otimes \rho$'', respectively.

Let $\G \car^\sigma (X,\mu)$ be an action of the countable discrete group $\G$ by $\mu$-preserving automorphisms of a standard probability space $(X,\mu)$.  This yields a unitary representation $\pi^\sigma : \G \rightarrow \UU(L_0^2(X,\mu))$ called the \textbf{Koopman representation} associated to $\sigma$. (Here $L_0^2(X,\mu)$ denotes the orthogonal complement in $L^2(X,\mu)$ to the subspace of the constant functions on $X$.)  Note that the Koopman representation is the unitary representation associated to the orthogonal representation of $\G$ acting on the real-valued $L^2$-functions.  We say that the action $\sigma$ is ergodic (or weakly mixing, mixing, etc.) if the Koopman representation $\pi^\sigma$ is in the sense of the above definition.  An action $\G \car^\sigma (X,\mu)$ is \textbf{(essentially) free} if, for all $\g \in \G$,
$\g \not= e$, $\mu \{ x \in X : \sigma_\g(x) = x \} = 0$.

Given unitary representations $\pi : \G \rightarrow \UU(\HH)$ and $\rho : \G \rightarrow \UU(\KK)$, we say that $\pi$ is \textbf{contained} in $\rho$ if there is a linear isometry $V: \HH \rightarrow \KK$ such that $\pi_\g = V^* \rho_\g V$, for all $\g \in \G$.  We say that $\pi$ is \textbf{weakly contained} $\rho$ if for any $\xi, \eta \in \HH$, there are $\xi_n', \eta_n' \in \KK$ such that $\ip{\pi_\g(\xi)}{\eta} = \Sigma_{n \in \N}\ip{\rho_\g(\xi_n')}{\eta_n'}$, for all $\g \in \G$.  Note that amenability of a representation $\pi$ is equivalent to $\pi \otimes \pi^{op}$ weakly containing the trivial representation, which is equivalent with Bekka's definition by Theorem 5.1 in \cite{bekka}.\\

The ``representation theory" of a $\mathrm {II}_1$ factor is captured in the structure of its bimodules, also called correspondences (cf. \cite{popacorrespondence}).  The theory of correspondences was first developed by A. Connes \cite{connescorrespondence}.

\begin{defn} Let $M$ be a $\mathrm {II}_1$ factor.  An \textbf{$M$-$M$ Hilbert bimodule} is a Hilbert space $\HH$ equipped with a representation $\pi: M \otimes_{\mathrm{alg}} M^{op} \rightarrow B(\HH)$ which is normal when restricted to $M$ and $M^{op}$.  We write $\pi(x \otimes y^{op})\xi$ as $x \xi y$.
\end{defn}

An $M$-$M$ Hilbert bimodule $\HH$ is \textbf{contained} in an $M$-$M$ Hilbert bimodule $\KK$ if there is a linear isometry $V: \HH \rightarrow \KK$ such that $V(x \xi y) = x V(\xi) y$, for all $\xi \in \HH$, $x,y \in M$; $\HH$ is \textbf{weakly contained} in $\KK$ if for any $\xi, \eta \in \HH$, $F \subset M$ finite, there exist $\xi_n', \eta_n' \in \KK$ such that $\ip{x \xi y}{\eta} = \Sigma_{n \in \N} \ip{x \xi_n' y}{\eta_n'}$, for all $x,y \in F$.  The \textbf{trivial bimodule} is the space $L^2(M,\tau)$ with the obvious bimodule structure induced by left and right multiplication; the \textbf{coarse bimodule} is the space $L^2(M,\tau)\otimes L^2(M,\tau)$ with bimodule structure induced by left multiplication on the first factor and right multiplication on the second.  The trivial and coarse bimodules play analogous roles in the theory of $M$-$M$ Hilbert bimodules to the roles played, respectively, by the trivial and left-regular representations in the theory of unitary representations of locally-compact groups.  Note that an $M$-$M$ correspondence $\HH$ contains the trivial correspondence if and only if $\HH$ has non-zero $M$-central vectors (a vector $\xi$ is $M$-central if $x \xi = \xi x$, for all $x \in M$). 

Given $\xi, \eta \in \HH$, note the maps $M \ni x \mapsto \ip{x \xi}{\eta}, \ip{\xi x}{\eta}$ are normal linear functionals on $M$.  A vector $\xi \in \HH$ is called \textbf{left (respectively, right) bounded} if there exists $C >0$ such that for every $x \in M$, $\|x \xi\| \leq C \|x\|_2$, (resp., $\|\xi x\| \leq C \|x\|_2$).  The set of vectors which are both left and right-bounded forms a dense subspace of $\HH$.  By \cite{popacorrespondence}, to $\xi$, a left-bounded vector, we can associate a completely-positive map $\phi_\xi: M \rightarrow M$ such that for all $x,y \in M$, $\|x \xi y\| = \tau(x^*x\phi_\xi(yy^*))^{1/2}$.  If $\xi$ is also right-bounded then this map is seen to naturally extend to a bounded operator $\hat \phi_\xi : L^2(M,\tau) \rightarrow L^2(M,\tau)$.

Given two $M$-$M$ Hilbert bimodules $\HH$ and $\KK$, there is a well-defined tensor product $\HH \otimes_M \KK$ in the category of $M$-$M$ Hilbert bimodules: see \cite{popacorrespondence} for details.

\begin{defn}[Compare with Definition \ref{def:ergodic}.]  Let $\HH$ be an $M$-$M$ Hilbert bimodule.  $\HH$ is said to:
\begin{enumerate}
	\item be \textbf{weakly mixing} if $\HH \otimes_M \HH^{op}$ does not contain the trivial $M$-$M$ Hilbert bimodule;

	\item be \textbf{compact (or mixing)} if for every sequence $u_i \in \UU(M)$ such that $u_i \rightarrow 0$, weakly, we have that $$\lim_{i \rightarrow \infty}\sup_{\|x\| \leq 1}\ip{u_i \xi x}{\eta} = \lim_{i \rightarrow \infty}\sup_{\|x\| \leq 1}\ip{x \xi u_i}{\eta} = 0,$$ for all $\xi, \eta \in \HH$ (equivalently, $\hat \phi_\xi$ is a compact operator on $L^2(M,\tau)$ for every left-bounded vector $\xi \in \HH$);
	
	\item have \textbf{spectral gap} if there exist $x_1, \dots, x_n \in M$ such that $\|\xi - P(\xi)\| \leq \sum_{i=1}^n\|x_i \xi - \xi x_i\|$, for all $\xi \in \HH$, where $P$ is the projection onto the central vectors;
	
	\item have \textbf{stable spectral gap} if  $\HH \otimes_M \HH^{op}$ has spectral gap;
	
	\item be \textbf{amenable} if it is either not weakly mixing or does not have stable spectral gap.
\end{enumerate}
\end{defn}

If $\HH$ is a compact $M$-correspondence and $\KK$ an arbitary $M$-correspondence, then $\HH \otimes_M \KK$ (and also $\KK \otimes_M \HH$) is compact, since $\hat \phi_{\xi \otimes_M \eta} = \hat \phi_\eta \circ \hat \phi_\xi$ if $\xi$ and $\eta$ are both left and right-bounded.\\

Let $\HH$ and $\KK$ be $M$-$M$ correspondences, and denote by $\HH_0$ and $\KK_0$ the set of right-bounded vectors in $\HH$ and $\KK$, respectively.  For $\xi, \eta \in \HH_0$, denote by $(\xi|\eta)$ the element of $M$ such that $\ip{\xi x}{\eta y} = \tau(y^*(\xi|\eta)x)$, for all $x, y \in X$ (by normality of the map $xy^* \mapsto \ip{\xi xy^*}{\eta}$, there exists such a $(\xi|\eta) \in L^1(M,\tau)$; right-boundedness of $\xi$ and $\eta$ implies $(\xi|\eta) \in M$).  It is clear that $( \cdot | \cdot )$ is a bilinear map $\HH_0 \times \HH_0 \rightarrow M$ such that $(\xi | \xi) \geq 0$ and $(\xi | \xi) = 0$ if and only if $\xi = 0$, for all $\xi \in \HH_0$.  For $\xi \in \HH_0$ and $\eta \in \KK_0$, define the linear map $T_{\xi,\eta} : \HH_0 \rightarrow \KK_0^{op}$ by $T_{\xi,\eta}(\cdot) = (\cdot|\xi)\eta^{op}$.  It is easy to check that $T_{\xi,\eta}$ is a bounded with $\|T_{\xi,\eta}\| = \|\xi\|^{-1/2}\|(\xi|\xi)\eta\|$; hence, $T_{\xi,\eta}$ extends to a bounded operator $\HH \rightarrow \KK^{op}$.  Let $\LL_M^2(\HH,\KK)$ be the subspace of $B(\HH,\KK^{op})$ which is the closed span of all operators of the form $T_{\xi,\eta}$ under the Hilbert norm $\|T_{\xi,\eta}\|_{\LL_M^2} = \tau((\xi|\xi)(\eta|\eta))^{1/2}$.  Moreover, $\LL_M^2(\HH,\KK)$ is equipped with a natural $M$-$M$ Hilbert bimodule structure given by $(x \otimes y^{op})(T_{\xi,\eta}) = T_{x \xi, y \eta}$ identifying it with $\HH \otimes_M \KK^{op}$.  Note that if $T \in \LL_M^2(\HH,\KK)$, then $(T^*T)^{1/2} \in \LL_M^2(\HH,\HH)$.

\begin{prop} An $M$-$M$ correspondence $\HH$ is weakly mixing if and only if for any $M$-$M$ correspondence $\KK$, $\HH \otimes_M \KK^{op}$ does not contain the trivial correspondence.
\end{prop}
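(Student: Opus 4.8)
The plan is to prove the two implications separately. The reverse implication is immediate; the forward one is the substance of the statement, and I would obtain it by transporting to the $\mathrm{II}_1$ factor setting the classical fact that a weakly mixing unitary representation remains ergodic after tensoring with the conjugate of any representation (the parenthetical in Definition \ref{def:ergodic}(2)), using the operator model $\HH \otimes_M \KK^{op} \cong \LL_M^2(\HH,\KK)$ recalled above together with the $\LL_M^2$-valued ``absolute value'' $(T^*T)^{1/2}$.

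For the ``if'' direction I would simply take $\KK = \HH$ in the hypothesis: the assertion that $\HH \otimes_M \KK^{op}$ fails to contain the trivial correspondence for \emph{every} $M$-$M$ correspondence $\KK$ in particular says that $\HH \otimes_M \HH^{op}$ does not contain it, which is precisely the definition of $\HH$ being weakly mixing.

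For the ``only if'' direction I would argue the contrapositive. Assume $\HH \otimes_M \KK^{op}$ contains the trivial correspondence for some $M$-$M$ correspondence $\KK$, i.e. it admits a nonzero $M$-central vector. Under the identification $\HH \otimes_M \KK^{op} \cong \LL_M^2(\HH,\KK)$ this vector corresponds to a nonzero $T \in \LL_M^2(\HH,\KK)$, and I would unwind the explicit bimodule structure $(x \otimes y^{op})T_{\xi,\eta} = T_{x\xi,y\eta}$ to see that centrality of the vector is exactly $M$-$M$-bimodularity of $T$. Next I would pass to $T^*T$: it is nonzero since $T \neq 0$, and it is $M$-$M$-bimodular because the adjoint of a bimodular operator is bimodular and bimodularity survives composition. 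By the remark preceding the proposition, $(T^*T)^{1/2} \in \LL_M^2(\HH,\HH)$; and since $(T^*T)^{1/2}$ is an operator-norm limit of polynomials in $T^*T$ with vanishing constant term, each of which is bimodular, and since bimodularity passes to norm limits, $(T^*T)^{1/2}$ is a nonzero $M$-$M$-bimodular element of $\LL_M^2(\HH,\HH) \cong \HH \otimes_M \HH^{op}$, hence a nonzero $M$-central vector there. Therefore $\HH \otimes_M \HH^{op}$ contains the trivial correspondence, so $\HH$ is not weakly mixing.

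The one step where care is genuinely required is the dictionary ``nonzero $M$-central vector of $\HH \otimes_M \KK^{op}$'' $\longleftrightarrow$ ``nonzero $M$-$M$-bimodular operator in $\LL_M^2(\HH,\KK)$'': making it precise means unwinding the definitions of the $M$-valued pairing $(\cdot\,|\,\cdot)$, of the operators $T_{\xi,\eta}$, and of the conjugate bimodule $\KK^{op}$, keeping track of which actions get adjointed along the way. Everything after that is the routine move of replacing a Hilbert--Schmidt-type intertwiner of $\HH$ with $\KK$ by the square root of its modulus to manufacture an intertwiner of $\HH$ with itself — which is exactly what the $\LL_M^2$ machinery and the cited remark are designed to make available.
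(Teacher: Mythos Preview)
Your proposal is correct and follows essentially the same route as the paper: both argue the contrapositive of the forward implication by identifying $\HH \otimes_M \KK^{op}$ with $\LL_M^2(\HH,\KK)$, taking a nonzero $M$-central $T$, and passing to $(T^*T)^{1/2} \in \LL_M^2(\HH,\HH)$. You simply spell out the two steps the paper leaves implicit --- the dictionary between centrality and bimodularity, and why the square root inherits centrality --- so the argument is the same but more fully justified.
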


\begin{proof}  The reverse implication is trivial.  Conversely, suppose there exists $\KK$ such that $\HH \otimes \KK^{op}$ contains an $M$-central vector.  Identifying $\HH \otimes_M \KK^{op}$ with $\LL_M^2(\HH,\KK)$, let $T \in \LL_M^2(\HH,\KK)$ be an $M$-central vector. Then $(T^*T)^{1/2} \in \LL_M^2(\HH,\HH)$ is an $M$-central vector; hence, $\HH$ is not weakly mixing.
\end{proof}


\subsection{Gaussian actions} \label{sec:gaussian}

Let $\pi:\G \rightarrow \OO(\HH)$ be an orthogonal representation of a countable discrete group $\G$. The aim of this section is to describe the construction of a measure-preserving action of $\G$ on a non-atomic standard probability space $(X,\mu)$ such that $\HH$ is realized as a subspace of $L_\R^2(X,\mu)$ and $\pi$ is contained in the Koopman representation $\G \car L_0^2(X,\mu)$.  The action $\G \car (X,\mu)$ is referred to as the \textbf{Gaussian action} associated to $\pi$.  We give an operator-algebraic alternative construction of the Gaussian action similar to Voiculescu's construction of free semi-circular random variables.\\

Given a real Hilbert space $\HH$, the \textbf{n-symmetric tensor} $\HH^{\odot n}$ is the subspace of $\HH^{\otimes n}$ fixed by the action of the symmetric group $S_n$ by permuting the indices.  For $\xi_1, \dots, \xi_n \in \HH$, we define their symmetric tensor product $\xi_1 \odot \dots \odot \xi_n \in \HH^{\odot n}$ to be $\frac{1}{n!} \sum_{\sigma \in S_n} \xi_{\sigma(1)} \otimes \dots \otimes \xi_{\sigma(n)}$.
Denote $$\SSS(\HH) = \C \Omega \oplus \bigoplus_{n=1}^\infty (\HH \otimes \C)^{\odot n},$$ with renormalized inner product such that $\|\xi\|_{\SSS(\HH)}^2 = n!\|\xi\|^2$, for $\xi \in \HH^{\odot n}$.\\

For $\xi \in \HH$ let $x_\xi$ be the \textbf{symmetric creation operator}, $$x_\xi(\Omega) = \xi, \ x_\xi(\eta_1 \odot \dots \odot \eta_k) = \xi \odot \eta_1 \odot \dots \odot \eta_k,$$ and its adjoint, $\frac{\partial}{\partial \xi} = (x_\xi)^*$
$$\frac{\partial}{\partial \xi}(\Omega) = 0, \ \frac{\partial}{\partial \xi}(\eta_1 \odot \dots \odot \eta_k) = \sum_{i=1}^k \ip{\xi}{\eta_i}\eta_1 \odot \dots \odot \widehat \eta_i \odot \dots \odot \eta_k.$$
Let $$s(\xi) = \frac{1}{2} (x_\xi + \frac{\partial}{\partial \xi}),$$ and note that it is an unbounded, self-adjoint operator on $\SSS(\HH)$.\\

The \textbf{moment generating function} $M(t)$ for the Gaussian distribution is defined to be $$M(t) = \frac{1}{\sqrt{2 \pi}}\int_{-\infty}^\infty \exp(tx) \exp(-x^2/2)dx = \exp(t^2/2).$$ It is easy to check that if $\|\xi\| =1$ then $$\langle s(\xi)^n \, \Omega, \Omega \rangle = M^{(n)}(0) = \frac{(2k)!}{2^k k!},$$ if $n = 2k$ and $0$ if $n$ is odd. Hence, $s(\xi)$ may be regarded as a Gaussian random variable.  Note that if $\xi, \eta \in \HH$ then $s(\xi)$ and $s(\eta)$ commute, moreover, if $\xi \perp \eta$, then $\ip{s(\xi)^m s(\eta)^n \Omega}{\Omega} = 0$, for all $m,n \in \N$; thus, $s(\xi)$ and $s(\eta)$ are independent random variables.\\

From now on we will use the convention $\xi_1 \xi_2 \dots \xi_k$ to denote the symmetric tensor $\xi_1 \odot \xi_2 \odot \dots \odot \xi_k$.  Let $\Xi$ be a basis for $\HH$ and $$\mathcal S(\Xi) = \{\Omega\} \cup \{s(\xi_1)s(\xi_2) \dots s(\xi_k)\Omega : \xi_1, \xi_2, \dots, \xi_k \in \Xi \}.$$

\begin{lem} The set $\mathcal S(\Xi)$ is a (non-orthonormal) basis of $\SSS(\HH)$.
\label{lem:sbasis}
\end{lem}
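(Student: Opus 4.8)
The plan is to show that $\mathcal S(\Xi)$ spans $\SSS(\HH)$ and is linearly independent, by an induction on the tensor degree that tracks the "leading term" of a product $s(\xi_1)\cdots s(\xi_k)\Omega$. The key computational observation is that for any $\xi_1,\dots,\xi_k \in \HH$,
\[
s(\xi_1)s(\xi_2)\cdots s(\xi_k)\Omega = \tfrac{1}{2^k}\,\xi_1 \odot \xi_2 \odot \cdots \odot \xi_k \ + \ (\text{lower-degree terms}),
\]
where "lower-degree terms" means a vector lying in $\bigoplus_{j<k}(\HH\otimes\C)^{\odot j}$, in fact in the span of symmetric products of at most $k-2$ of the $\xi_i$'s with scalar coefficients built from the inner products $\ip{\xi_i}{\xi_j}$. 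This follows by unwinding the definition $s(\xi)=\tfrac12(x_\xi + \partial/\partial\xi)$: applying the operators right-to-left, the unique way to never invoke an annihilation operator $\partial/\partial\xi_i$ is to always create, which produces the top-degree term $\tfrac{1}{2^k}\xi_1\odot\cdots\odot\xi_k$; every other choice contracts at least one pair and drops the degree by (a multiple of) two. I would formalize this with a clean induction: assuming the claim for $k-1$, write $s(\xi_1)(s(\xi_2)\cdots s(\xi_k)\Omega)$ and apply $x_{\xi_1}$ and $\tfrac{\partial}{\partial\xi_1}$ separately to the inductive expansion, noting $x_{\xi_1}$ raises degree by one and $\tfrac{\partial}{\partial\xi_1}$ lowers it by one.

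For spanning, I would induct on $n$ to show that $(\HH\otimes\C)^{\odot n} \subset \overline{\operatorname{span}}\,\mathcal S(\Xi)$. Since pure symmetric tensors $\xi_{i_1}\odot\cdots\odot\xi_{i_n}$ with $\xi_{i_j}\in\Xi$ span $\HH^{\odot n}$ (and then one extends scalars to $\HH\otimes\C$), it suffices to hit each such tensor. By the leading-term formula, $2^n\, s(\xi_{i_1})\cdots s(\xi_{i_n})\Omega$ equals $\xi_{i_1}\odot\cdots\odot\xi_{i_n}$ plus a vector of strictly smaller degree; by the inductive hypothesis the latter already lies in $\operatorname{span}\,\mathcal S(\Xi)$, hence so does $\xi_{i_1}\odot\cdots\odot\xi_{i_n}$. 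The base case $n=0$ is just $\Omega\in\mathcal S(\Xi)$, and $n=1$ gives $s(\xi)\Omega=\xi$.

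For linear independence, suppose a finite linear combination $c_0\Omega + \sum_{j} c_j\, s(\xi_1^{(j)})\cdots s(\xi_{k_j}^{(j)})\Omega = 0$. Let $d$ be the maximal degree $k_j$ appearing. Projecting onto $(\HH\otimes\C)^{\odot d}$ and using the leading-term formula, only the terms with $k_j = d$ survive, contributing $\tfrac{1}{2^d}\sum_{k_j=d} c_j\, \xi_1^{(j)}\odot\cdots\odot\xi_d^{(j)} = 0$. But distinct (unordered) $d$-tuples of basis vectors give distinct, linearly independent pure symmetric tensors in $\HH^{\odot d}$, so all those $c_j$ vanish; peeling off the top degree and iterating downward kills every coefficient. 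The main obstacle — really the only thing requiring care — is pinning down the leading-term formula precisely enough (both that the top-degree coefficient is exactly $2^{-k}$ and nonzero, and that nothing of degree $k$ other than $\xi_1\odot\cdots\odot\xi_k$ appears), since everything else is a two-step degree-filtration argument resting on it; one should also note that elements of $\mathcal S(\Xi)$ indexed by the same multiset of basis vectors in different orders coincide as vectors (the $s(\xi)$ commute), so "basis" is understood with $\mathcal S(\Xi)$ regarded as a set indexed by finite multisets from $\Xi$.
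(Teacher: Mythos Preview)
Your argument is correct and follows the same route as the paper: both observe that $s(\xi_1)\cdots s(\xi_k)\Omega$ equals a nonzero scalar multiple of $\xi_1\odot\cdots\odot\xi_k$ plus terms of strictly lower degree, and then run an induction on $k$ to get spanning. The paper's proof is terser and stops at spanning, omitting linear independence entirely; you additionally supply that half via the natural top-degree projection argument, and you are right to note that $\mathcal S(\Xi)$ should be read as indexed by multisets since the $s(\xi)$ commute. One small remark: your leading coefficient $2^{-k}$ is what the stated definition $s(\xi)=\tfrac12(x_\xi+\partial/\partial\xi)$ gives, while the paper writes ``top term $\xi_1\cdots\xi_k$'' and ``$s(\xi)\Omega=\xi$''---an apparent normalization slip in the paper, immaterial to the proof since the coefficient is nonzero either way.
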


\begin{proof}  We will show that $\xi_1 \dots \xi_k \in \mathrm{span}(\mathcal S(\Xi))$, for all $\xi_1, \dots, \xi_k \in \HH$.  We have $\Omega \in \mathrm{span}(\mathcal S(\Xi))$.  Also, since $s(\xi)\Omega = \xi$, $\HH \subset \mathrm{span}(\mathcal S(\Xi))$.  Now as $s(\xi_1) \dots s(\xi_k)\Omega = P(\xi_1, \dots, \xi_k)$ is a polynomial in $\xi_1, \dots, \xi_k$ of degree $k$ with top term $\xi_1 \dots \xi_k$, the result follows by induction on $k$.
\end{proof}

Let $u(\xi_1, \dots, \xi_k) = \exp(\pi i s(\xi_1) \dots s(\xi_k))$ and $u(\xi_1, \dots, \xi_k)^t = \exp(\pi i t s(\xi_1) \dots s(\xi_k))$. Denote by $A$ the von Neumann algebra generated by all such $u(\xi_1, \dots, \xi_k)$, which is the same as the von Neumann algebra generated by the spectral projections of the unbounded operators $s(\xi_1) \dots s(\xi_k)$.

\begin{thm} We have that $L^2(A,\tau) \cong \SSS(\HH)$, and $A$ is a maximal abelian $*$-subalgebra of $B(\SSS(\HH))$ with faithful trace $\tau = \ip{\cdot \, \Omega}{\Omega}$.  In particular, $A$ is a diffuse abelian von Neumann algebra.
\end{thm}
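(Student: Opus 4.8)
The plan is to realize $(\SSS(\HH),\Omega)$ as $(L^2(X,\mu),1)$ for a non-atomic standard probability space $(X,\mu)$, with $A$ becoming $L^\infty(X,\mu)$ acting by multiplication; once this is done, every assertion of the theorem is a standard fact about $L^\infty(X,\mu)\subseteq B(L^2(X,\mu))$, whose one-line proofs I would recall.

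To build the realization, fix an orthonormal basis $\{e_n\}$ of $\HH$; since $\xi\mapsto s(\xi)$ is $\R$-linear, every $s(\xi)$ is a (possibly unbounded) function of the $s(e_n)$, so let $N$ be the von Neumann algebra generated by the spectral projections of the $s(e_n)$. The operators $s(e_n)$ pairwise strongly commute (their spectral projections commute — this is implicit already in forming $\exp(\pi i\, s(\xi_1)\cdots s(\xi_k))$, and in any case follows from Nelson's analytic vector theorem since the finite-particle vectors are analytic for each $s(e_n)$), so they admit a joint spectral measure. By Lemma \ref{lem:sbasis} applied to $\Xi=\{e_n\}$ the monomials $s(e_{i_1})\cdots s(e_{i_k})\Omega$ span a dense subspace, and each such monomial lies in $\overline{N\Omega}$ (truncate the joint spectral integral representing it), so $\Omega$ is cyclic for $N$. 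The joint spectral theorem for the commuting family $\{s(e_n)\}$ with cyclic vector $\Omega$ then produces a standard probability space $(X,\mu)$, the joint distribution of the $s(e_n)$, and a unitary $U:\SSS(\HH)\to L^2(X,\mu)$ with $U\Omega=1$, with $Us(e_n)U^*$ the operator of multiplication by the $n$-th coordinate function, and with $UNU^*=L^\infty(X,\mu)$ acting by multiplication. Since orthogonal $e_n$'s yield independent $s(e_n)$'s, each a standard Gaussian by the moment computation with $M(t)=\exp(t^2/2)$ recorded above, $\mu$ is a countable product of standard Gaussian measures and hence non-atomic; if $\dim\HH<\infty$ the index set is finite and $\mu$ is a non-atomic finite-dimensional Gaussian.

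Next I would check $A=N$. The inclusion $N\subseteq A$ is clear, since the spectral projections of each $s(e_n)$ are among the generators of $A$ (the case $k=1$, $\xi_1=e_n$). For $A\subseteq N$, note that $s(\xi_i)=\sum_n\ip{\xi_i}{e_n}s(e_n)$ corresponds under $U$ to multiplication by a function $\phi_i\in L^2(X,\mu)$, and the product $s(\xi_1)\cdots s(\xi_k)$ — essentially self-adjoint on the finite-particle subspace — corresponds to multiplication by $\phi_1\cdots\phi_k$, whose spectral projections lie in $L^\infty(X,\mu)=UNU^*$. Hence all generators of $A$ lie in $N$, so $A=N$ and $UAU^*=L^\infty(X,\mu)$.

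Finally, all conclusions are read off from the model. The algebra $L^\infty(X,\mu)$ is maximal abelian in $B(L^2(X,\mu))$: if $T$ commutes with every multiplication operator, then $h:=T1\in L^2(X,\mu)$ satisfies $Tf=hf$ for $f\in L^\infty(X,\mu)$, and $\|hf\|_2\le\|T\|\,\|f\|_2$ forces $h\in L^\infty(X,\mu)$, so $T$ is multiplication by $h$; thus $A$ is maximal abelian in $B(\SSS(\HH))$. The vector $1\leftrightarrow\Omega$ is cyclic and separating for $A$, and $\tau=\ip{\cdot\,\Omega}{\Omega}$ corresponds to $f\mapsto\int f\,d\mu$, which is faithful, normal, and tracial (the last because $A$ is abelian); the GNS map $a\mapsto a\Omega$ then implements the unitary $L^2(A,\tau)\cong\overline{A\Omega}=\SSS(\HH)$, and non-atomicity of $\mu$ gives that $A$ is diffuse. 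The one point genuinely needing care, and which I would treat in full, is the identification invoked in the third paragraph: that the self-adjoint operator $s(\xi_1)\cdots s(\xi_k)$ whose spectral projections define $A$ is essentially self-adjoint on the finite-particle domain with closure equal to multiplication by $\phi_1\cdots\phi_k$ — the classical Segal correspondence in this normalization — together with the strong commutativity of $\{s(e_n)\}$ used for the joint spectral theorem.
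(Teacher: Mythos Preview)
Your argument is correct and yields all the stated conclusions, but it takes a more elaborate route than the paper. The paper's proof is essentially two lines: Stone's theorem gives $\lim_{t\to 0}\frac{u(\xi_1,\ldots,\xi_k)^t-1}{\pi it}\,\Omega = s(\xi_1)\cdots s(\xi_k)\Omega$, so by Lemma~\ref{lem:sbasis} the vector $\Omega$ is cyclic for $A$; since an abelian von Neumann algebra with a cyclic vector is automatically maximal abelian, everything follows. No probability space is built, and the identification $A\cong L^\infty(X,\mu)$ is left implicit.

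Your approach, by contrast, explicitly constructs the Gaussian space $(X,\mu)$ via the joint spectral theorem for the strongly commuting family $\{s(e_n)\}$, and then imports the standard facts about multiplication algebras. This buys you a concrete model --- useful since the paper later wants to think of $A$ as functions on a probability space anyway --- and it makes diffuseness and faithfulness of $\tau$ completely transparent rather than implicit. The cost is the extra verification that $A=N$; you do this by chasing generators, but note there is a shortcut: once $UNU^*=L^\infty(X,\mu)$ is known to be maximal abelian and $N\subseteq A$ with $A$ abelian, one immediately gets $A\subseteq N'=N$, avoiding the essential self-adjointness issue you flagged for the products $s(\xi_1)\cdots s(\xi_k)$. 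With that simplification your argument becomes entirely routine, whereas the paper's argument is shorter but relies on the reader supplying the ``cyclic $\Rightarrow$ MASA'' step and the diffuseness.
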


\begin{proof} By Lemma \ref{lem:sbasis}, $A \mapsto A \Omega$ is an embedding of $A$ into $\SSS(\HH)$.  By Stone's Theorem $$\lim_{t\rightarrow 0} \frac{u(\xi_1, \dots, \xi_k)^t - 1}{\pi i t} \Omega = s(\xi_1) \dots s(\xi_k) \Omega;$$ hence, $A \Omega$ is dense in $\SSS(\HH)$.  This implies that $A$ is maximal abelian in $B(\SSS(\HH))$.
\end{proof}

There is a natural strongly-continuous embedding $\OO(\HH) \hookrightarrow \UU(\SSS(\HH))$ given by $$T \mapsto T^\SSS = 1 \oplus \bigoplus_{n=1}^\infty T^{\odot n}.$$  It follows that there is an embedding $\OO(\HH) \rightarrow \Aut(A,\tau)$, $T \mapsto \sigma_T$, which can be identified on the unitaries $u(\xi_1, \dots, \xi_k)$ by $$\sigma_T(u(\xi_1, \dots, \xi_k)) = \Ad(T^\SSS)(u(\xi_1, \dots, \xi_k)) = u(T(\xi_1), \dots, T(\xi_k)).$$  Thus for an orthogonal representation $\pi: \G \rightarrow \OO(\HH)$, there is a natural action $\sigma^\pi : \G \rightarrow \Aut(A,\tau)$ given by $\sigma_\g^\pi(u(\xi_1, \dots, \xi_k)) = u(\pi_\g(\xi_1), \dots, \pi_\g(\xi_k)) = \Ad(\pi_\g^\SSS)(u(\xi_1, \dots, \xi_k))$.  The action $\sigma^\pi$ is the \textbf{Gaussian action} associated to $\pi$.\\

We have that ergodic properties which remain stable with respect to tensor products tranfer from $\pi$ to $\sigma^\pi$.

\begin{prop}  In particular, for a subgroup $H \leq \G$, $\sigma^\pi|_H$ possesses any of the following properties if and only if $\pi|_H$ does:
\begin{enumerate}
	\item weak mixing;
	
	\item mixing;
	
	\item stable spectral gap;
	
	\item being contained in a direct sum of copies of the left-regular representation;
	
	\item being weakly contained in the left-regular representation.
	
\end{enumerate}
\label{prop:ergodic}
\end{prop}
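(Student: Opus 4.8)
The plan is to reduce everything to the structure of the symmetric Fock space. First, the Gaussian construction is functorial: for a subgroup $H \leq \G$ the restricted action $\sigma^\pi|_H$ is itself a Gaussian action, namely $\sigma^{\pi|_H}$ on the same algebra $A$, since $\sigma^\pi_\g = \Ad(\pi_\g^\SSS)$ depends only on $\pi_\g$; it therefore suffices to prove the equivalence for $H = \G$ and then apply it to $\pi|_H$. Second, because $\pi_\g^\SSS$ fixes $\Omega$, the Koopman representation of $\sigma^\pi$ on $L_0^2(A,\tau) = \SSS(\HH) \ominus \C \Omega$ is precisely
$$\kappa := \bigoplus_{n=1}^\infty (\pi \otimes \C)^{\odot n},$$
the direct sum of the symmetric tensor powers of the complexification $\pi \otimes \C$. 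By definition $\sigma^\pi$ has any one of the five listed properties if and only if $\kappa$ does, while $\pi$ has it if and only if $\pi \otimes \C$ does. Note that $\pi \otimes \C = (\pi \otimes \C)^{\odot 1}$ occurs as a direct summand of $\kappa$, and that each $(\pi \otimes \C)^{\odot n}$ is a subrepresentation of the full power $(\pi \otimes \C)^{\otimes n}$.

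The ``only if'' direction is immediate once one observes that all five properties pass to subrepresentations. This is clear for weak mixing and mixing; it is standard for being weakly contained in, or contained in a multiple of, the left-regular representation; and spectral gap passes to subrepresentations because a unit vector lying in a $\G$-invariant subspace and orthogonal to the invariant vectors of that subspace is automatically orthogonal to all invariant vectors of the ambient space (the orthogonal complement of a $\G$-invariant subspace being $\G$-invariant). For stable spectral gap one further notes that $(\pi \otimes \C) \otimes (\pi \otimes \C)^{op}$ is a direct summand of $\kappa \otimes \kappa^{op}$, so that spectral gap of the latter forces spectral gap of the former. Since $\pi \otimes \C \hookrightarrow \kappa$, this disposes of the ``only if'' direction in every case.

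For the ``if'' direction I would assemble three ingredients. (a) Each property is inherited by a tensor product $\alpha \otimes \beta$ from its factor $\alpha$: for weak mixing and stable spectral gap this is essentially the definition (weak mixing of $\alpha$ says $\alpha \otimes \gamma$ is ergodic for every $\gamma$, stable spectral gap says $\alpha \otimes \gamma$ has spectral gap for every $\gamma$); for mixing it holds because matrix coefficients of $\alpha \otimes \beta$ are limits of sums of products of matrix coefficients of $\alpha$ and $\beta$; and for ``$\alpha \prec \lambda$'' and ``$\alpha$ contained in a multiple of $\lambda$'' it follows from Fell's absorption principle $\lambda \otimes \beta \cong \lambda^{\oplus \dim \beta}$. (b) Each property descends from $(\pi \otimes \C)^{\otimes n}$ to its subrepresentation $(\pi \otimes \C)^{\odot n}$. (c) A countable direct sum of representations with the property again has it — valid for weak mixing, for mixing (via the usual truncation: split a pair of vectors into a finite head, which mixes, and a tail of small norm), and for being (weakly) contained in a multiple of $\lambda$. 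Writing $(\pi \otimes \C)^{\otimes n} = (\pi \otimes \C) \otimes (\pi \otimes \C)^{\otimes (n-1)}$ and combining (a)--(c) shows that $\kappa$ inherits from $\pi \otimes \C$ any of the first, second, fourth, or fifth property.

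Stable spectral gap is the one case where step (c) fails — a countable direct sum of representations with spectral gap need not have spectral gap — and this is the main obstacle. To circumvent it I would split off a single copy of $\pi \otimes \C$ from the whole Koopman bimodule at once. Using $(\pi \otimes \C)^{\odot n} \leq (\pi \otimes \C) \otimes (\pi \otimes \C)^{\otimes (n-1)}$ for each $n$ (and the analogous inclusion for $\kappa^{op}$), one obtains an isometric $\G$-embedding
$$\kappa \otimes \kappa^{op} \ \hookrightarrow \ (\pi \otimes \C) \otimes \Big( \bigoplus_{n, m \geq 1} (\pi \otimes \C)^{\otimes (n-1)} \otimes \big( (\pi \otimes \C)^{\otimes m} \big)^{op} \Big) = (\pi \otimes \C) \otimes \tau.$$
Since $\pi$ has stable spectral gap, $(\pi \otimes \C) \otimes \tau$ has spectral gap for this (indeed any) representation $\tau$; and as spectral gap passes to subrepresentations, $\kappa \otimes \kappa^{op}$ has spectral gap, that is, $\kappa$ has stable spectral gap. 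I expect the verifications in (a)--(c) to be routine; the genuine content of the argument is the identification of the Koopman representation with $\bigoplus_n (\pi \otimes \C)^{\odot n}$, together with the factoring trick above, which is precisely what is needed to sidestep the failure of spectral gap under infinite direct sums.
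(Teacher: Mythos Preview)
Your proposal is correct and is precisely the argument the paper has in mind: the paper does not give a proof at all, offering only the one-sentence justification ``ergodic properties which remain stable with respect to tensor products transfer from $\pi$ to $\sigma^\pi$,'' and your write-up is a careful unpacking of exactly that remark via the identification of the Koopman representation with $\bigoplus_{n\geq 1}(\pi\otimes\C)^{\odot n}$. Your handling of the one genuinely non-routine point---that stable spectral gap is not preserved under infinite direct sums, circumvented by factoring a single copy of $\pi\otimes\C$ out of the whole of $\kappa$ at once and invoking the ``$\pi\otimes\rho$ has spectral gap for every $\rho$'' formulation---is the right maneuver and goes beyond what the paper spells out.
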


For Gaussian actions, stable properties are equivalent to their ``non-stable" counterparts.  The following proposition serves as a prototype of such a result, showing that ergodicity implies stable ergodicity, \textit{i.e.}, weak mixing.

\begin{thm} $\G \car^{\sigma^\pi}(A,\tau)$ is ergodic if and only if $\pi$ is weakly mixing.
\end{thm}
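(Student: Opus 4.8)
The plan is to make the Koopman representation of $\sigma^\pi$ completely explicit from the Fock-space picture, reducing ergodicity to a statement about invariant vectors in the symmetric tensor powers of $\pi$, and then to treat the two implications separately. For the identification: since $L^2(A,\tau)\cong\SSS(\HH)$ with cyclic vector $\Omega$, since $\sigma_\g^\pi=\Ad(\pi_\g^\SSS)$ on $A$, and since $\pi_\g^\SSS\Omega=\Omega$ for all $\g$, the Koopman representation of $\sigma^\pi$ on $L^2(A,\tau)$ is $\g\mapsto\pi_\g^\SSS=1\oplus\bigoplus_{n\geq 1}\pi_\g^{\odot n}$, with $L^2_0(A,\tau)$ corresponding to $\bigoplus_{n\geq 1}(\HH\otimes\C)^{\odot n}$. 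Hence, by the definition of ergodicity for an action via its Koopman representation, $\sigma^\pi$ is ergodic if and only if $\pi^{\odot n}$ has no nonzero invariant vector for every $n\geq 1$.

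For the implication ``$\pi$ weakly mixing $\Rightarrow\sigma^\pi$ ergodic'', I would use that weak mixing makes every tensor power ergodic: by Definition \ref{def:ergodic}(2), $\pi\otimes\rho$ is ergodic for any unitary $\G$-representation $\rho$, so taking $\rho=\pi^{\otimes(n-1)}$ (and $\rho$ trivial when $n=1$) shows that $\pi^{\otimes n}$ has no nonzero invariant vector; since $(\HH\otimes\C)^{\odot n}$ is a subrepresentation of $(\HH\otimes\C)^{\otimes n}$, the same holds there, and the reduction above yields ergodicity of $\sigma^\pi$. Alternatively, one may simply quote Proposition \ref{prop:ergodic}(1): $\sigma^\pi$ is then weakly mixing, a fortiori ergodic.

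For the converse I would argue by contraposition. If $\pi$ is not weakly mixing, then (a standard fact, applied to the complexification of $\pi$) it contains a nonzero finite-dimensional subrepresentation, and intersecting such a subspace with its conjugate — which is again $\pi_\G$-invariant — produces a nonzero finite-dimensional $\G$-invariant real subspace $W\subseteq\HH$. Taking an orthonormal basis $e_1,\dots,e_d$ of $W$, I would exhibit $v=\sum_{i=1}^d e_i\odot e_i\in(\HH\otimes\C)^{\odot 2}$ as a nonzero vector orthogonal to $\Omega$ which is fixed by $\pi^{\odot 2}$: indeed the matrix of $\pi_\g|_W$ in the basis $(e_i)$ is orthogonal, which is exactly what makes $\sum_i(\pi_\g e_i)\odot(\pi_\g e_i)=\sum_i e_i\odot e_i$. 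This gives a nonzero invariant vector in $L^2_0(A,\tau)$, so $\sigma^\pi$ is not ergodic. The main obstacle is precisely locating an invariant vector inside the symmetric Fock space: a generic nonzero invariant vector for $\pi\otimes\pi^{op}\cong\pi\otimes\pi$ on $(\HH\otimes\C)^{\otimes 2}$ might lie entirely in the antisymmetric part and hence not belong to $\SSS(\HH)$, so one cannot use it directly; routing through a finite-dimensional subrepresentation and using the manifestly symmetric ``quadratic form'' vector $\sum_i e_i\odot e_i$ is what circumvents this.
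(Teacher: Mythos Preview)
Your argument is essentially correct, but there is a small slip in the contrapositive direction: intersecting a finite-dimensional invariant subspace $V\subset\HH\otimes\C$ with its conjugate $\overline V$ can give $\{0\}$ (e.g.\ $V=\C(e_1+ie_2)$). What you want is $V+\overline V$, which is still finite-dimensional, invariant, and conjugation-stable, hence of the form $W\otimes\C$ for a nonzero real $\G$-invariant $W\subset\HH$; equivalently, take the real span of the real and imaginary parts of vectors in $V$. With this fix your construction of $v=\sum_i e_i\odot e_i$ goes through.

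Your route differs from the paper's in two respects. First, you package the passage from an invariant tensor in $(\HH\otimes\C)^{\otimes 2}$ to something symmetric as the ``standard fact'' that non--weak-mixing is equivalent to the existence of a finite-dimensional subrepresentation, and then write down the trace-type vector $\sum_i e_i\odot e_i$; the paper instead stays with the invariant $\xi\in\HH^{\otimes 2}$, identifies it with a Hilbert--Schmidt operator, and symmetrizes explicitly via $|\xi|=(\xi\xi^*)^{1/2}\in\HH^{\odot 2}$ before passing to a finite-rank spectral projection. These are really two faces of the same polar-decomposition idea, but your version is shorter once the black box is granted, while the paper's is self-contained. Second, you exhibit a nonzero invariant vector in $L^2_0(A,\tau)$, which is exactly what ergodicity (as defined via the Koopman representation) requires; the paper goes further and produces a non-trivial $\sigma^\pi$-invariant unitary $u=\prod_i u(\eta_{i1},\eta_{i2})\in A$. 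The paper's conclusion is nominally stronger, but for the theorem as stated your $L^2$-level argument suffices and is more direct.
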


\begin{proof}
The reverse implication follows from Proposition \ref{prop:ergodic}.  Conversely, suppose there exists $\xi \in \HH^{\otimes 2}$ such that for all $\g \in \G$, $\pi_\g^2(\xi) = \xi$.  Viewing $\xi$ as a Hilbert-Schmidt operator on $\HH$, let $|\xi| = (\xi\xi^*)^{1/2}$.  Since the map $\xi \otimes \eta \mapsto \eta \otimes \xi$ is the same as taking the adjoint of the corresponding Hilbert-Schmidt operator, we have that $|\xi| \in \HH^{\odot 2}$ and $\pi_\g(|\xi|) = |\xi|$.  By functional calculus, there exists $\lambda > 0$, such that $\eta = E_\lambda(|\xi|) \neq 0$ is a finite rank operator.  Hence, $\eta = \eta_{11} \odot \eta_{12} + \dots + \eta_{n1} \odot \eta_{n2} \in \HH^{\odot 2}$ with $\eta_{i1} \odot \eta_{i2} \perp \eta_{j1} \odot \eta_{j2}$ for $i \neq j$.  But then $u = \prod_{i=1}^n u(\eta_{i1},\eta_{i2}) \in A$, a non-trivial unitary and $\sigma_\g^\pi(u) = u$.  Hence, $\sigma^\pi$ is not ergodic.
\end{proof}


\subsection{Cohomology of a representation \& action}

Let $\KK$ be a real Hilbert space and $\pi:\G\rightarrow\OO(\KK)$ an orthogonal representation of a countable discrete group $\G$.

\begin{defn} A \textbf{cocycle} is a map $b:\G\rightarrow\KK$ satisfying the cocycle identity $b(\g_1\g_2)=\pi_{\g_1} b(\g_2) + b(\g_1)$, for all $\g_1,\g_2\in\G$.  A cocycle is a \textbf{coboundary} is there exists $\eta\in\KK$ such that $b(\g) = \pi_\g\eta -\eta$, for all $\g\in\G$.
\end{defn}

It is a well-known fact (cf. \cite{bdhv}) that a cocycle $b$ is a coboundary if and only if $\sup_{\g\in\G}\| b(\g) \|<\infty$.  Let $Z^1(\G,\pi)$ and $B^1(\G,\pi)$ denote, respectively, the vector space of all cocycles and the subspace of coboundaries.  The \textbf{first cohomology space} $H^1(\G,\pi)$ of the representation $\pi$ is then defined to be $Z^1(\G,\pi)/B^1(\G,\pi)$.\\

Let $\G\car^\sigma(X,\mu)$ be an ergodic, measure-preserving action on a standard probability space $(X,\mu)$, and let $\A$ be a Polish topological group.

\begin{defn}  A \textbf{cocycle} is a measurable map $c:\G\times X \rightarrow \A$ satisfying the cocycle identity $c(\g_1\g_2,x) = c(\g_1,\sigma_{\g_2}(x))c(\g_2,x)$, for all $\g_1,\g_2\in\G$, $\mathrm{a.e.}\ x\in X$. A pair of cocycles $c_1, c_2$ are \textbf{cohomologous} (written $c_1 \sim c_2$) if there exists a measurable map $\xi: X \rightarrow \A$ such that $\xi(\sigma_\g(x))c_1(\g,x)\xi(x)^{-1} = c_2(\g,x)$ for all $\g \in \G$, $\mathrm{a.e.}\ x \in X$.  A cocycle is a \textbf{coboundary} if it is cohomologous to the cocycle which is identically 1.
\end{defn}

Let $Z^1(\G,\sigma,\A)$ and $B^1(\G,\sigma,\A)$ denote, respectively, the space of all cocycles and the subspace of coboundaries.  The \textbf{first cohomology space} $H^1(\G,\sigma,\A)$ of the action $\sigma$ is defined to be $Z^1(\G,\sigma,\A)/ \sim$.  Note that if $\A$ is abelian, $Z^1(\G,\sigma,\A)$ is endowed with a natural abelian group structure and $H^1(\G,\sigma,\A) = Z^1(\G,\sigma,\A)/B^1(\G,\sigma,\A)$.  To any homomorphism $\rho: \G \rightarrow \A$ we can associate a cocycle $\tilde{\rho}$ by $\tilde{\rho}(\g,x) = \rho(\g)$.  Using terminology developed by Popa (cf. \cite{popasuperrigid}), a cocycle $c$ is said to \textbf{untwist} if there exists a homomorphism $\rho:\G \rightarrow \A$ such that $c$ is cohomologous to $\tilde{\rho}$.  To any cocycle $c \in Z^1(\G,\sigma,\A)$, we can associated two cocycles $c_\ell, c_r \in Z^1(\G,\sigma \times \sigma,\A)$ given by $c_\ell(\g,(x,y)) = c(\g,x)$ and $c_r(\g,(x,y)) = c(\g,y)$.  It is easy to check that $c$ untwists only if $c_\ell$ is cohomologous to $c_r$; if $\sigma$ is weakly mixing, Theorem 3.1 in \cite{popasuperrigid} establishes the converse.


\subsection{Closable derivations}

We review here briefly some general properties of closable derivations on a finite von Neumann algebra and set up some notation to be used in the sequel.  
For a more detailed discussion see \cite{davieslindsay}, \cite{petersonT}, \cite{petersonl2}, or \cite{ozawapopaII}.

\begin{defn} Let $(N,\tau)$ be a finite von Neumann algebra and $\HH$ be an $N$-$N$ correspondence. A \textbf{derivation} $\dd$ is an unbounded operator $\dd:L^2(N,\tau)\rightarrow\HH$ such that $D(\dd)$ is a $\|\cdot\|_2$-dense $*$-subalgebra of $N$, and $\dd(xy) = x\dd(y)+\dd(x)y$, for each $x,y\in D(\dd)$.  A derivation is \textbf{closable} if it is closable as an operator and \textbf{real} if $\HH$ has an antilinear involution $\JJ$ such that $\JJ (x \xi y) = y^* \JJ(\xi) x^*$, and $\JJ(\delta(z)) = \delta(z^*)$, for each $x,y \in N$, $\xi \in \HH$, $z \in D(\dd)$.
\end{defn}

If $\dd$ is a closable derivation then by \cite{davieslindsay} $D(\overline\dd) \cap N$ is again a $*$-subalgebra and $\dd_{|D(\overline\dd \cap N)}$ is again
a derivation.  We will thus use the slight abuse of notation by saying that $\overline\dd$ is a closed derivation.

To every closed real derivation $\delta:N \rightarrow \HH$, we can associate a semigroup deformation $\Phi^t = \exp(-t\delta^*\overline\delta)$, $t > 0$, 
and a resolvent deformation $\zeta_\alpha = (\alpha/(\alpha + \delta^*\overline{\delta}))^{1/2}$, $\alpha > 0$.  Both of these deformations are of unital, 
symmetric, completely-positive maps; moreover, the derivation $\delta$ can be recovered from these deformations \cite{sauvageot1, sauvageot2}.

We also have that the deformation $\Phi^t$ converges uniformly on $(N)_1$ as $t \rightarrow 0$ if and only if 
the deformation $\zeta_\alpha$ converges uniformly on $(N)_1$ as $\alpha \rightarrow \infty$.

\begin{defn}[Definition 4.1 in ~\cite{petersonl2}]\label{defn:l2rigid}
Let $(N, \tau)$ be a finite von Neumann algebra. $N$ is \textbf{$L^2$-rigid} if 
given any inclusion $(N, \tau) \subset (M, \tilde \tau)$, and any closable real derivation $\delta: M \rightarrow \HH$
such that $\HH$ when viewed as an $N$-$N$ correspondence embeds in $( L^2N \overline \otimes L^2N )^{\oplus \infty}$,
we then have that the associated deformation $\zeta_\alpha$ converges uniformly to the identity in $\| \cdot \|_2$ on the unit ball of $N$.
\end{defn}

We point out here that our definition above is formally stronger than the one given in \cite{petersonl2}.  Specifically, there it was assumed that $\HH$ embedded into the coarse bimodule as an $M$-$M$ bimodule rather than an $N$-$N$ bimodule.  However, this extra condition was not used in \cite{petersonl2}, and since the above definition has better stability properties (see Theorem \ref{thm:l2rigidityoe}) we have chosen to use the same terminology.

Examples of nonamenable groups which do not give rise to $L^2$-rigid group von Neumann algebras are groups
such that the first $\ell^2$-Betti number is positive.  These are, in fact, the only known examples, 
and $L^2$-rigidity should be viewed as a von Neumann analog of vanishing first $\ell^2$-Betti number.

Showing that a group von Neumann algebra is $L^2$-rigid can be quite difficult in general since one has to consider derivations which may
not be defined on the group algebra.  Nonetheless, there are certain situations where this can be verified. 

\begin{thm}[Corollary 4.6 in ~\cite{petersonl2}]
Let $\G$ be a nonamenable countable discrete group.  If $L\G$ is weakly rigid, non-prime, or has property $(\G)$ of Murray and von Neumann, then
$L\G$ is $L^2$-rigid.
\end{thm}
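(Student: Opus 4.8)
The plan is to verify Definition~\ref{defn:l2rigid} directly. Fix an inclusion $(N,\tau)\subseteq(M,\tilde\tau)$ with $N=L\G$ and a closable real derivation $\dd\colon M\to\HH$ such that $\HH$, as an $N$-$N$ correspondence, embeds into $(L^2N\,\overline\otimes\,L^2N)^{\oplus\infty}$; I must show that $\zeta_\alpha=(\alpha/(\alpha+\dd^*\overline{\dd}))^{1/2}$, equivalently $\Phi^t=\exp(-t\dd^*\overline{\dd})$, converges uniformly in $\|\cdot\|_2$ to the identity on $(N)_1$. The first step is to turn the derivation into a geometric object: using the real structure $\JJ$ on $\HH$ one builds---as a relative, operator-algebraic version of the Gaussian construction of Section~\ref{sec:gaussian} applied to $\HH$---an \emph{s-malleable deformation} of $M$: a tracial von Neumann algebra $\tilde M\supseteq M$, a one-parameter automorphism group $(\theta_t)_{t\in\R}$ of $\tilde M$, and a period-two automorphism $\rho$ of $\tilde M$ with $\rho|_M=\id$ and $\rho\theta_t\rho=\theta_{-t}$, satisfying $E_M\circ\theta_t|_M=\Phi^{c(t)}$ for a reparametrization $c(t)\to0$ as $t\to0$, and such that $L^2(\tilde M)\ominus L^2(M)$ is \emph{again} weakly contained in the coarse $N$-$N$ correspondence. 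Then $\Phi^t\to\id$ uniformly on $(N)_1$ iff $\theta_t\to\id$ uniformly on $(N)_1$, and the relations for $\rho$ yield Popa's transversality inequality $\|\theta_{2t}(x)-x\|_2\le 2\|\theta_t(x)-E_M\theta_t(x)\|_2$ for $x\in M$.

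Second, I use that $N$ is nonamenable. Because $\HH$ (and, by construction, $L^2(\tilde M)\ominus L^2(M)$) is weakly contained in the coarse $N$-correspondence while $N$ is nonamenable, neither weakly contains the trivial $N$-correspondence; in particular $L^2(\tilde M)\ominus L^2(M)$ has no $N$-central vectors, so $N'\cap\tilde M\subseteq M$, and the same holds with $N$ replaced by any nonamenable von Neumann subalgebra $B\subseteq N$, since the coarse $N$-correspondence restricts to $B$ as a multiple of the coarse $B$-correspondence. Via the derivation/deformation duality of \cite{petersonl2} this gives the dichotomy driving the whole argument: for a nonamenable $B\subseteq N$, the deformation $\theta_t$ converges uniformly to $\id$ on $(B)_1$ (equivalently, $B$ has spectral gap in the deformation bimodule, which it automatically does by the preceding sentence). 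The mechanism for propagating this is Popa's untwisting step: if $\theta_t\to\id$ uniformly on $(B)_1$ and $u\in\UU(N)$ normalizes $B$, then from $\theta_t(uau^*)=\theta_t(u)\theta_t(a)\theta_t(u)^*$ together with $\theta_t(a)\approx a$ and $\theta_t(uau^*)\approx uau^*$ one finds that $v_t:=u^*\theta_t(u)$ asymptotically commutes with $B$, so $\|v_t-E_{B'\cap\tilde M}(v_t)\|_2\to0$; since $B'\cap\tilde M\subseteq M$ this forces $\|\theta_t(u)-E_M\theta_t(u)\|_2\to0$, and transversality then gives $\theta_t(u)\to u$, everything uniformly. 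Hence uniform convergence propagates from $(B)_1$ to the von Neumann algebra generated by $B$ together with its normalizer in $N$.

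Now each of the three hypotheses supplies the seed subalgebra. If $N=P_1\,\overline\otimes\,P_2$ is non-prime, one of the diffuse factors, say $P_1$, is nonamenable (else $N$ would be amenable); by the dichotomy $\theta_t\to\id$ uniformly on $(P_1)_1$, and since $P_2\subseteq P_1'\cap N$ and $P_1\vee P_2=N$ the subalgebra $P_1$ is regular in $N$, so the untwisting step yields $\theta_t\to\id$ uniformly on $(N)_1$. If $N$ is weakly rigid, choose a w-normal chain $\G_0=\G_1\lhd\G_2\lhd\cdots\lhd\G_\beta=\G$ with $\G_0$ infinite and $(\G,\G_0)$ of relative property (T); then $(L\G,L\G_0)$ has relative property (T), which forces $\theta_t\to\id$ uniformly on $(L\G_0)_1$, and one induces along the chain---the successor step being the untwisting step (each $L\G_\alpha$ is infinite and normal in $L\G_{\alpha+1}$, so $(L\G_\alpha)'\cap\tilde M\subseteq M$) and limit ordinals handled by a compactness argument on increasing unions---to reach $\theta_t\to\id$ uniformly on $(L\G)_1$. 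If $N$ has property $(\G)$, one passes to the ultrapower $\tilde M^\omega\supseteq M^\omega\supseteq N^\omega$: a nontrivial central sequence makes $Q:=N'\cap N^\omega$ diffuse, and since the deformation bimodule has spectral gap over $N$ (hence retains it in the ultrapower) while central sequences are transversal to it, a relative spectral-gap argument in the spirit of Popa, combined with the untwisting step, again forces $\theta_t\to\id$ uniformly on $(N)_1$; this case is the most delicate. In all three cases $\zeta_\alpha\to\id$ uniformly on $(L\G)_1$, so $L\G$ is $L^2$-rigid.

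I expect the main obstacle to lie in the first two steps rather than in the case analysis: constructing the s-malleable deformation from a closable real derivation while keeping $L^2(\tilde M)\ominus L^2(M)$ inside a multiple of the coarse correspondence of $N$ (so that the coarseness hypothesis of Definition~\ref{defn:l2rigid} is genuinely usable), and establishing the precise equivalence between uniform convergence of $\theta_t$ on $(B)_1$ and spectral gap of $B$ in the deformation bimodule. Once these are in place, the three cases are standard deformation/rigidity bookkeeping in the style of \cite{popasuperrigid}---with the residual subtleties being the limit-ordinal step in the weakly rigid case and the ultrapower argument in the property $(\G)$ case.
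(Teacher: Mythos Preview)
The paper does not prove this statement; it is quoted as Corollary~4.6 of \cite{petersonl2} and no argument is given here. So there is no ``paper's own proof'' to compare against---your proposal is a sketch of a proof for a result the present paper merely cites.

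That said, your sketch contains a genuine error that would make the argument collapse. You assert a dichotomy: ``for a nonamenable $B\subseteq N$, the deformation $\theta_t$ converges uniformly to $\id$ on $(B)_1$ (equivalently, $B$ has spectral gap in the deformation bimodule, which it automatically does by the preceding sentence).'' This is false. The preceding sentence only gives that the coarse $B$-correspondence has no $B$-central vectors (hence $B'\cap\tilde M\subseteq M$); it does \emph{not} give spectral gap, and nonamenability alone does not force uniform convergence of a coarse deformation. If it did, every nonamenable finite factor would be $L^2$-rigid, which fails already for $L\F_2$ (positive first $\ell^2$-Betti number). The three hypotheses are precisely what supply the missing rigidity, each by a different mechanism: relative property~(T) forces uniform convergence of $\zeta_\alpha$ on $(L\G_0)_1$ directly; in the non-prime case one uses that a derivation into the coarse bimodule restricted to one tensor factor becomes, after conditioning, an inner-type perturbation controlled by the other factor; property~$(\G)$ uses central sequences to produce asymptotically commuting unitaries on which the deformation must converge. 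None of these reduce to ``nonamenable $\Rightarrow$ spectral gap.''

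A secondary point: the route you outline---exponentiating the closable real derivation to an s-malleable deformation $(\tilde M,\theta_t,\rho)$ with $E_M\circ\theta_t|_M=\Phi^{c(t)}$---is not how \cite{petersonl2} proceeds. That paper works directly with the resolvent maps $\zeta_\alpha$ and the derivation, using estimates of the form $\|\overline\dd(\zeta_\alpha(x))\|\le C\sqrt\alpha\|x\|$ together with commutator/normalizer inequalities at the level of $\HH$ itself. Building $\tilde M$ as you describe (a Gaussian/second-quantization dilation of $\HH$) can be done, but controlling $L^2(\tilde M)\ominus L^2(M)$ as an $N$-bimodule is nontrivial and is not needed for the original argument.
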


We give another class of examples below (see also \cite{ozawapopaI}, \cite{ozawapopaII}, or \cite{petersonwe}).  The gap between group von Neumann
algebras which are known to be $L^2$-rigid  and groups with vanishing first $\ell^2$-Betti number is, however, quite large.  
For example, as we mentioned in the introduction, the wreath product $\Z \wr \F_2$ is a group which has vanishing first $\ell^2$-Betti number but for which it is not known whether the group von Neumann algebra is $L^2$-rigid.

\section{Deformations}  

In this section and Section \ref{sec:derivations} we will discuss the interplay between one-parameter groups of automorphisms or, more generally, semigroups of completely-positive maps of finite factors (deformations) and their infinitesimal generators (derivations).  The motivation for studying deformations at the infinitesimal level is that it allows for the creation of other related deformations of the algebra. And while Popa's deformation/rigidity machinery requires uniform convergence of the original deformation on some target subalgebra, it is often more feasible to demonstrate uniform convergence of a related deformation, then transfer those estimates back the the original.  

We begin by recalling Popa's notion of an s-malleable deformation, and give some examples of such deformations that have appeared in the literature.

\begin{defn}[Definition 4.3 in \cite{popasuperrigid}] Let $(M,\tau)$ be a finite von Neumann algebra such that $(M,\tau) \subset (\tilde{M},\tilde{\tau})$.  A pair $(\alpha,\beta)$, consisting of a point-wise strongly continuous one-parameter family $\alpha : \R \rightarrow \Aut(\tilde M, \tilde \tau)$ and an involution $\beta\in\Aut(\tilde{M},\tilde{\tau})$ is called a \textbf{s-malleable deformation} of $M$ if:

\begin{enumerate}
  \item $M\subset\tilde{M}^\beta$;
  
	\item $\alpha_t\circ\beta = \beta\circ\alpha_{-t}$; and 
	
	\item $\alpha_1(M)\perp M$.
\end{enumerate}
\end{defn}

\subsection{Popa's deformation}\label{popasdeformation}  
The following deformation was used by Popa in \cite{popasuperrigid} to obtain cocycle superrigidity for generalized Bernoulli actions of property (T) groups.

Let $(A,\tau)$ be a finite diffuse abelian von Neumann algebra and $u,v\in A\otimes A$ be generating Haar unitaries for $A\otimes 1, 1\otimes A\subset A\otimes A$, respectively.  Set $w=u^*v$. Choose $h\in A\otimes A$ self-adjoint such that $\exp(\pi i h) = w$, and let $w^t = \exp(\pi i t h)$. Since $\{w\}''\perp A\otimes 1, 1\otimes A$, we have that for any $t$, $w^tu$ and $w^tv$ are again Haar unitaries.  Moreover, since $w\in\{w^tu,w^tv\}''$, $\{w^tu, w^tv\}$ is a pair of generating Haar unitaries in $A\otimes A$.  Hence there is a well-defined one-parameter family $\alpha:\R\rightarrow\Aut(A\otimes A,\tau\otimes\tau)$ given by 
$$
\alpha_t(u) = w^tu,\ \alpha_t(v) = w^tv.
$$  
The family $\alpha$, together with the automorphism $\beta$ given by 
$$
\beta(u) = u,\ \beta(v)=u^2v^*,
$$
is seen to be an s-malleable deformation of $A\otimes 1\subset A\otimes A$.

\begin{defn} Let $(P,\tau)$ be a finite von Neumann algebra and $\sigma:\G\rightarrow\Aut(P,\tau)$ a $\G$-action.  $\G\car^\sigma(P,\tau)$ is an \textbf{s-malleable action} if there exists an s-malleable deformation $(\alpha,\beta)$ of $(P,\tau)$ such that $\beta, \alpha_t$ commute with $\sigma_\g\otimes\sigma_\g$ for all $t\in\R,\ \g\in\G$.
\end{defn}

For any countable discrete group there is a canonical example of an s-malleable action, the \textbf{Bernoulli shift}.  Let $(A,\tau) = (L^\infty(\T,\lambda),\int\cdot d\lambda)$, $(X,\mu) = \prod_{g\in\G}(\T,\lambda)$, and $(B,\tau') = \bigotimes_{\g\in\G}(A,\tau)$.  The Bernoulli shift is the natural action $\G\car^\sigma(X,\mu)$ defined by shifting indices: $\sigma_{\g_0}((x_\g)_\g) = (x_\g)_{\g_0\g} = (x_{\g_0^{-1}\g})_\g$.  Defining 
$$
\tilde \alpha_t((\tilde x_\g)_\g) = (\alpha_t( \tilde x_\g))_\g
$$ 
and 
$$
\tilde \beta((\tilde x_\g)_\g) = (\beta(\tilde x_\g))_\g,
$$ for $(\tilde x_\g)_\g \in \tilde B = \bigotimes_{\g\in\G}(A \otimes A) \cong B \otimes B$, we see that $(\tilde \alpha, \tilde \beta)$ is an s-malleable deformation of $B$ which commutes with the Bernoulli $\G$-action.


\subsection{Ioana's deformation}\label{sec:chifanioana}  The deformation described below was first used by Ioana \cite{ioanawreath} in the case when the base space is nonamenable, and later used by Chifan and Ioana \cite{chifanioana} in part to obtain solidity of $L^\infty(X,\mu)\rtimes_\sigma \G$, whenever $L\G$ solid and $\G \car^\sigma (X,\mu)$, the Bernoulli shift.  Their deformation is inspired by the free product deformation used in \cite{ipp}.  A similar deformation has also been previously used by Voiculescu in \cite{voiculescufreeproduct}.

Given a finite von Neumann algebra $(B,\tau)$, let $\tilde{B} = B*L(\Z)$.  If $u\in\UU(L(\Z))$ is a generating Haar unitary, choose an $h\in L(\Z)$ such that $\exp(\pi i h) = u$, and let $u^t = \exp(\pi i t h)$.  Define the deformation $\alpha:\R\rightarrow\Aut(\tilde{B},\tilde \tau)$ by 
$$
\alpha_t = \Ad(u^t).
$$  
Let $\beta\in\Aut(\tilde{B},\tilde{\tau})$ be defined by 
$$
\beta_{|B} = \id \ \ {\rm and} \ \  \beta(u) = u^*.
$$  
It is easy to check that $(\alpha, \beta)$ is a s-malleable deformation of $B$.  

If a countable discrete group $\G$ acts on a countable set $S$ then we may consider the \textbf{generalized Bernoulli shift action} of $\G$ on $\otimes_{s \in S} B$ given
by $\sigma_\g( \otimes_{s \in S} b_s ) = \otimes_{s \in S} b_{\g^{-1}s}$.  We then have that $\otimes_{s \in S} B \subset \otimes_{s \in S} \tilde{B}$ and 
$( \otimes_{s \in S} \alpha, \otimes_{s \in S} \beta)$ gives a s-malleable deformation of $\otimes_{s \in S} B$.


\subsection{Malleable deformations of Gaussian actions}\label{sec:infinitesimalgaussian}
We will now construct the canonical s-malleable deformation of a Gaussian action which is given in Section 4.3 of \cite{furmancsr}, and give an explicit description of its associated derivation.  To begin, let $\pi: \G \rightarrow \OO(\HH)$ be an orthogonal representation, $\tilde \HH = \HH \oplus \HH$, and $\tilde \pi = \pi \oplus \pi$. If $\sigma^\pi: \G \rightarrow \Aut(A,\tau)$ is the Gaussian action associated with $\pi$, then the Gaussian action associated to $\tilde \pi$ is naturally identified with the action $\sigma^\pi \otimes \sigma^\pi$ on $A \otimes A$.  Let $\tilde \sigma^\pi = \sigma^\pi \otimes \sigma^\pi$.\\

Let $J = \bigl(\begin{array}{cc} 0 & 1\\ -1 & 0 \end{array}\bigr)$, the operator which gives $\tilde \HH$ the structure of a complex Hilbert space, and consider the one-parameter unitary group $\theta_t = \exp(\frac{\pi t}{2} J)$.  Since $\theta_t$ commutes with $\tilde \pi$, there is a well-defined one-parameter group $\alpha: \R \rightarrow \Aut(A \otimes A, \tau \otimes \tau)$ which commutes with $\tilde \sigma^\pi$ namely, 
$$
\alpha_t = \sigma_{\theta_t} = {\rm Ad}(\exp(\frac{\pi t}{2}J)^\SSS).
$$  
Let $\rho = \bigl(\begin{array}{cc} 1 & 0\\ 0 & -1 \end{array}\bigr)$, and observe $\rho \circ \theta_{-t} = \theta_t \circ \rho$.  Hence, 
$$
\beta = \sigma_\rho = {\rm Ad}(\rho^\SSS)
$$ 
conjugates $\alpha_t$ and $\alpha_{-t}$.  Finally notice that $\theta_1(\HH \oplus 0) = 0 \oplus \HH$, which gives $\alpha_1(A \otimes 1) = 1 \otimes A$.  The pair $(\alpha, \beta)$ is, thusly, an s-malleable deformation of the action $\sigma^\pi$.\\
 
Let $T \in B(\tilde \HH)$ be skew-adjoint.  Associate to $T$ the unbounded skew-adjoint operator $\partial(T)$ on $\SSS(\HH)$ defined by $$\partial(T)(\Omega) = 0,\ \partial(T)(\xi_1 \dots \xi_n) = \sum_{i=1}^n \xi_1 \dots T(\xi_i) \dots \xi_n.$$  We have that if $U(t) = \exp(t T) \in \OO(\HH)$, then $$\lim_{t \rightarrow 0}\frac{
U(t)^\SSS - I}{t} = \partial(T).$$  Let $\delta : A \otimes A \rightarrow L^2(A \otimes A)$ be the derivation defined by $$\delta(x) = [x, \partial(T)] = \lim_{t \rightarrow 0}\frac{\sigma_{U(t)}(x) - x}{t}.$$  Taking $T$ to be the operator $J$ defined above, gives us the derivation which is the infinitesimal generator of the s-malleable deformation of the Gaussian action described in this section.  From the relation $\delta( \cdot) = [\ \cdot \ , \partial(J)]$, we see that the $*$-algebra generated by the operators $s(\xi)$ forms a core for $\delta$.\\

Letting $\delta_0 = \delta|_{A \otimes 1}$, we have that $$\Phi^t = \exp(-t \delta_0^* \overline \delta_0) = \exp( -t E_{A \otimes 1} \circ \delta^* \overline \delta) = \exp(t E_{A \otimes 1} \circ \delta^2).$$  We compute $$E_{A \otimes 1} \circ \delta^2(s(\xi_1) \dots s(\xi_k)) = -k \, s(\xi_1) \dots s(\xi_k).$$  Hence, $$\Phi^t(s(\xi_1) \dots s(\xi_k)) = (1 - e^{-kt})s(\Omega) + e^{-kt} s(\xi_1) \dots s(\xi_k).$$


\section{Cohomology of Gaussian actions}
In this section, we obtain Theorem \ref{thm:1} and its corollary.  We do so by using a construction (cf. \cite{guichardet}, \cite{parthasarathyschmidt}, \cite{schmidtcohomology}) which, given an orthogonal representation and a cocycle, produces a $\T$-valued cocycles for the associated Gaussian action.  We then show that these cocycles do not untwist by applying the above deformation.\\

Let $b:\G \rightarrow \HH$ be a cocycle for an orthogonal representation $\pi:\G \rightarrow \OO(\HH)$ and $\G\car^{\sigma}(A,\tau) = (L^\infty(X,\mu), \int \cdot \, d\mu)$ be the Gaussian action associated to $\pi$, as described in section \ref{sec:gaussian}.  Viewing $\HH$ as a subset of $L_\R^2(X,\mu)$, Parthasarathy and Schmidt in \cite{parthasarathyschmidt} constructed the cocycle $c:\G \times X \rightarrow \T$ by the rule 
$$
c(\g,x) = \exp(i b(\g^{-1}))(x).
$$  
We write $\omega_\g$ for the element of $\UU(L^\infty(X,\mu))$ given by $\omega_\g(x) = c(\g,\g^{-1}x)$.  The cocycle identity for $c$ then transforms to the formula $\omega_{\g_1\g_2} = \omega_{\g_1}\sigma_{\g_1}(\omega_{\g_2})$, for all $\g_1, \g_2 \in \G$.  Moreover, $c$ is cohomologous to a homomorphism if and only if there is a unitary element $u \in \UU(L^\infty(X, \mu))$ such that
$\g \mapsto u \omega_\g \sigma_\g(u^*) $ is a homomorphism, i.e., each $u \omega_\g \sigma_\g(u^*)$ is fixed by the action of the group.

A routine calculation shows that $\tau(\omega_\g) = \int c(\g,x) d\mu(x) = \exp(-\| b(\g) \|^2/2)$.  In particular, this shows that the representation associated to the positive-definite function $\varphi(\g) = \exp(-\|b(\g)\|^2/2)$ is naturally isomorphic to the twisted Gaussian action $\omega_\g \sigma_\g$.

\begin{thm}  Using the notation above, if $\pi:\G \rightarrow \OO(\HH)$ is weak mixing, (so that $\sigma$ is ergodic) and if $b$ is an unbounded cocycle, then $c$ does not untwist.
\end{thm}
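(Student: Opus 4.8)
The plan is to argue by contradiction, using the s-malleable deformation $(\alpha,\beta)$ of the Gaussian action constructed in Section~\ref{sec:infinitesimalgaussian}. Suppose $c$ untwists. By the reformulation recalled just before the statement (together with ergodicity of $\sigma$, which forces any homomorphism into the fixed subalgebra to be scalar-valued), there are a unitary $u \in \UU(L^\infty(X,\mu)) = \UU(A)$ and a homomorphism $\rho : \G \to \T$ with $u\,\omega_\g\,\sigma_\g(u^*) = \rho(\g)$ for all $\g$; equivalently $\omega_\g = \rho(\g)\,\omega_\g'$ where $\omega_\g' := u^*\sigma_\g(u)$. I would carry out the computation inside $\tilde A = A \otimes A$: here $A = A\otimes 1$ is the Gaussian algebra of $\HH\oplus 0 \subseteq \tilde\HH$, $\tilde\sigma^\pi = \sigma^\pi\otimes\sigma^\pi$ is the Gaussian action of $\pi\oplus\pi$, and $\alpha_t \in \Aut(\tilde A, \tau\otimes\tau)$ is the malleable deformation, an automorphism commuting with $\tilde\sigma^\pi$ and pointwise $\|\cdot\|_2$-continuous in $t$. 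Since $\rho(\g)$ is a scalar of modulus one it cancels from inner products, so $\langle\alpha_t(\omega_\g),\omega_\g\rangle = \langle\alpha_t(\omega_\g'),\omega_\g'\rangle$ for all $t$ and $\g$, and the argument amounts to estimating this number in two ways.

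For the first estimate, the coboundary form of $\omega_\g'$ is precisely what lets us shift the deformation off $\omega_\g$. Because $\alpha_t$ is a trace-preserving automorphism commuting with $\tilde\sigma^\pi$, we have $\alpha_t(\omega_\g') = \alpha_t(u^*)\,\tilde\sigma^\pi_\g(\alpha_t(u))$; setting $w_t := u\,\alpha_t(u^*)\in\UU(\tilde A)$ and using traciality together with $\alpha_t(u)u^* = (u\,\alpha_t(u^*))^*$, a short computation yields
$$
\langle\alpha_t(\omega_\g'),\omega_\g'\rangle \;=\; \tau\bigl(w_t\,\tilde\sigma^\pi_\g(w_t)^*\bigr) \;=\; \langle w_t,\tilde\sigma^\pi_\g(w_t)\rangle .
$$
One checks that $\|w_t\|_2 = 1$ and, by pointwise $\|\cdot\|_2$-continuity of $\alpha$, that $\tau(w_t) \to 1$ as $t \to 0^+$. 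Writing $w_t = \tau(w_t)\,1 + w_t^0$ with $\|w_t^0\|_2^2 = 1 - |\tau(w_t)|^2$, and using that $\tilde\sigma^\pi_\g$ is unitary on $L^2(\tilde A)$ and fixes $1$, we obtain
$$
\operatorname{Re}\langle w_t,\tilde\sigma^\pi_\g(w_t)\rangle \;\geq\; |\tau(w_t)|^2 - \|w_t^0\|_2^2 \;=\; 2\,|\tau(w_t)|^2 - 1
$$
\emph{uniformly} over $\g \in \G$.

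For the second estimate I would compute $\langle\alpha_t(\omega_\g),\omega_\g\rangle$ outright. Recall $\omega_\g = \exp(i\,s(\xi_\g))$ for $\xi_\g = b(\g)\oplus 0 \in \tilde\HH$ (up to a harmless sign in the exponent), and $\alpha_t = \sigma_{\theta_t}$ with $\theta_t = \exp(\tfrac{\pi t}{2}J)$, so $\alpha_t(\omega_\g) = \exp(i\,s(\theta_t\xi_\g))$. Since the $s(\,\cdot\,)$ are commuting Gaussian variables with $\tau(\exp(i\,s(\zeta))) = \exp(-\tfrac12\|\zeta\|^2)$, it follows that
$$
\langle\alpha_t(\omega_\g),\omega_\g\rangle \;=\; \tau\bigl(\exp(i\,s(\theta_t\xi_\g - \xi_\g))\bigr) \;=\; \exp\!\Big(-\tfrac12\|\theta_t\xi_\g - \xi_\g\|^2\Big) \;=\; \varphi(\g)^{\,c_t},
$$
where $c_t := 2\bigl(1 - \cos\tfrac{\pi t}{2}\bigr)$ is $>0$ for $t \in (0,1]$ and $\varphi(\g) = \exp(-\|b(\g)\|^2/2) = \tau(\omega_\g)$. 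In particular $\langle\alpha_t(\omega_\g),\omega_\g\rangle$ is real, so comparison with the first estimate gives $\varphi(\g)^{\,c_t} \geq 2\,|\tau(w_t)|^2 - 1$ for every $\g \in \G$ and every $t \in (0,1]$.

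To conclude I would use that $b$ is unbounded: then $\sup_{\g}\|b(\g)\| = \infty$, which furnishes a sequence $(\g_n)$ with $\varphi(\g_n) \to 0$. Fixing $t \in (0,1]$ and letting $n \to \infty$ in the last inequality forces $2\,|\tau(w_t)|^2 - 1 \leq 0$, i.e.\ $|\tau(w_t)|^2 \leq \tfrac12$ for every $t \in (0,1]$; this contradicts $\tau(w_t) \to 1$ as $t \to 0^+$, and the theorem follows. I expect the only genuinely computational ingredient to be the identity $\langle\alpha_t(\omega_\g),\omega_\g\rangle = \varphi(\g)^{c_t}$, which rests on the coherent-vector (Wick exponential) description of $\SSS(\HH)$ and on $\alpha_t$ acting on $\tilde\HH$ by the rotation $\theta_t$. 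The conceptual heart is the first estimate: untwisting is exactly what allows one to transfer $\alpha_t$ from $\omega_\g$ onto the fixed unitary $u$, making the deformed correlation uniformly close to $1$ — behaviour that, by the second estimate, an unbounded cocycle cannot exhibit.
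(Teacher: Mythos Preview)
Your proposal is correct and follows essentially the same approach as the paper: both compute $\langle\alpha_t(\omega_\g),\omega_\g\rangle = \exp\bigl(-(1-\cos\tfrac{\pi t}{2})\|b(\g)\|^2\bigr)$ directly and contrast this with the uniform-in-$\g$ convergence forced by untwisting. The only difference is cosmetic: where the paper invokes the general principle that ``completely positive deformations become asymptotically $A$-bimodular'' to conclude that $\alpha_t$ must converge uniformly on $\{\omega_\g\}$, you unpack this via the explicit inequality $\operatorname{Re}\langle\alpha_t(\omega_\g),\omega_\g\rangle \geq 2|\tau(u\,\alpha_t(u^*))|^2 - 1$, which is exactly what that principle amounts to here.
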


\begin{proof}
Since $\sigma$ is ergodic, if $c$ were to untwist then there would exist some $u \in \UU(A)$ such that $u \omega_\g \sigma_\g(u) \in \T$, for all $\g \in \G$.  It would then follow that any deformation
of $A$ which commutes with the action of $\G$ must converge uniformly on the set $\{ \omega_\g \ | \ \g \in \G \}$.  Indeed, this is just a consequence of the fact that completely positive deformations become asymptotically $A$-bimodular.

However, if we apply the deformation $\alpha_t$ from Section \ref{sec:infinitesimalgaussian} then we can compute
$$
\langle \alpha_{2t/\pi}(\omega_\g \otimes 1), \omega_\g \otimes 1 \rangle 
$$
$$
= \langle \exp( i (\cos t) b(\g^{-1}) ) \otimes \exp( -i (\sin t) b(\g^{-1}) ), \exp (i b(\g^{-1}) ) \otimes 1 \rangle
$$
$$
=  \exp ( (1- \cos t)^2\| b(\g) \|^2/2 + ( \sin^2 t ) \| b(\g) \|^2/2 )
$$
$$
= \exp (-(1 - \cos t) \| b(\g) \|^2)
$$

This will converge uniformly for $\g \in \G$ if and only if the cocycle $b$ is bounded and hence the result follows.
\end{proof}

\begin{cor}
The exponentiation map described above induces an injective homomorphism $H^1(\G, \pi) \to H^1(\G, \sigma, \T)/ \chi (\G)$, where $\chi( \G)$ is the character group of $\G$.
\end{cor}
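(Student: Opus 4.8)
The exponentiation map in question sends a cocycle $b \in Z^1(\G,\pi)$ to the Parthasarathy--Schmidt cocycle $c(\g,x) = \exp(i b(\g^{-1}))(x)$, where $\HH$ is viewed inside $L_\R^2(X,\mu)$; equivalently, it sends $b$ to the family $\omega_\g = \sigma_\g(\exp(i b(\g^{-1}))) \in \UU(L^\infty(X,\mu))$. The plan has two steps: (i) show that $b \mapsto c$ is a group homomorphism $Z^1(\G,\pi) \to Z^1(\G,\sigma,\T)$ carrying $B^1(\G,\pi)$ into $B^1(\G,\sigma,\T)$, so that it descends to the asserted map $\Theta : H^1(\G,\pi) \to H^1(\G,\sigma,\T)/\chi(\G)$; and (ii) deduce injectivity of $\Theta$ from the preceding theorem. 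Essentially all the content lies in that theorem; step (i) is routine bookkeeping.

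For step (i), I would first record the identity $\omega_\g = \exp(-i b(\g))$. Since the embedding $\HH \subset L_\R^2(X,\mu)$ intertwines $\pi$ with the restriction of the Gaussian action to real-valued $L^2$-functions, each $\sigma_\g$ is a $*$-automorphism acting on $\HH$ as $\pi_\g$ and commuting with continuous functional calculus; together with the cocycle identity (which forces $b(e) = 0$, hence $\pi_\g b(\g^{-1}) = -b(\g)$) this gives $\omega_\g = \sigma_\g(\exp(i b(\g^{-1}))) = \exp(i \pi_\g b(\g^{-1})) = \exp(-i b(\g))$. Because the embedding $\HH \hookrightarrow L_\R^2(X,\mu)$ is $\R$-linear and $\exp(i(f+g)) = \exp(if)\exp(ig)$ pointwise a.e., the assignment $b \mapsto c$ is a homomorphism from the additive group $Z^1(\G,\pi)$ to the multiplicative group $Z^1(\G,\sigma,\T)$. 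If moreover $b$ is a coboundary, say $b(\g) = \pi_\g\eta - \eta$, then setting $v = \exp(i\eta) \in \UU(L^\infty(X,\mu))$ we get $\omega_\g = \exp(i\eta)\exp(-i\pi_\g\eta) = v\,\sigma_\g(v^*)$, which is precisely the statement that $c$ is a coboundary. Hence $b \mapsto c$ descends to a homomorphism $H^1(\G,\pi) \to H^1(\G,\sigma,\T)$, and a fortiori to $\Theta$.

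For step (ii), suppose $\Theta([b]) = 0$. Since $\T$ is abelian, $\chi(\G) = \operatorname{Hom}(\G,\T)$, and the image of $\chi(\G)$ in $H^1(\G,\sigma,\T)$ is exactly the collection of classes of the cocycles $\tilde\rho(\g,x) = \rho(\g)$; thus $\Theta([b]) = 0$ means precisely that $c$ is cohomologous to some such $\tilde\rho$, that is, $c$ untwists. By the preceding theorem --- this is where we use that $\pi$ is weakly mixing, so that $\sigma$ is ergodic --- a cocycle $c$ produced from an unbounded $b$ cannot untwist; therefore $b$ must be bounded, hence a coboundary by the standard fact recalled in Section~1, so $[b] = 0$ in $H^1(\G,\pi)$. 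This gives injectivity of $\Theta$.

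Since the only assertion of real substance, namely that such $c$ do not untwist, has already been proved in the theorem, I do not anticipate a serious obstacle; the steps that require care are the bookkeeping identification $\omega_\g = \exp(-i b(\g))$ (keeping straight that $\sigma_\g|_\HH = \pi_\g$ and that $\sigma_\g$ commutes with $\exp$) and the translation of ``$\Theta([b]) = 0$'' into ``$c$ untwists''.
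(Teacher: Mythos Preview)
Your proposal is correct and follows essentially the same approach as the paper: the paper's proof simply notes that cohomologous cocycles for $\pi$ yield cohomologous cocycles for $\sigma$ (your step (i)) and that injectivity follows from the preceding theorem together with the map being a homomorphism (your step (ii)). You have filled in the routine details the paper omits, including the identification $\omega_\g = \exp(-ib(\g))$ and the explicit untwisting translation, but the underlying argument is identical.
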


\begin{proof}
It is easy to see that if two cocycles in $Z^1(\G, \pi)$ are cohomologous then the resulting cocycles for the Gaussian action will also be cohomologous.  This shows that the map described above is well defined.

The above theorem, together with the fact that this map is a homomorphism, shows that this map is injective.
\end{proof}

Since a nonamenable group has vanishing first $\ell^2$-Betti number if and only if it has vanishing first cohomology into its left regular representation \cite{bekkavalette}, \cite{petersonthom}, we derive the following corollary.

\begin{cor}
Let $\G$ be a nonamenable countable discrete group, and let $\G\car^\sigma(X,\mu)$ be the Bernoulli shift action.
If $\beta_1^{(2)}(\G) \neq 0$ then $H^1(\G, \sigma, \T) \neq \chi(\G)$, where $\chi(\G)$ is the group of characters.  In particular, $\G \car^\sigma (X,\mu)$ is not $\UU_{\rm fin}$-cocycle superrigid.
\end{cor}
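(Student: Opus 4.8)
The plan is to read this off the preceding corollary once the Bernoulli shift is recognized as the Gaussian action of the left-regular representation, and then to feed in the known cohomological characterization of $\beta_1^{(2)}(\G)\neq 0$. First I would recall, as noted in the introduction, that the Bernoulli shift $\G\car^\sigma(X,\mu)$ is conjugate to the Gaussian action $\sigma^{\lambda_\R}$ of the real left-regular representation $\lambda_\R:\G\to\OO(\ell_\R^2(\G))$: writing $\ell_\R^2(\G)=\bigoplus_{g\in\G}\R\delta_g$ with $\G$ permuting the summands, the associated Gaussian algebra factors as $\bigotimes_{g\in\G}A(\R\delta_g)$, so the Gaussian probability space is the shift action on the product $\prod_{g\in\G}(\R,\gamma)$ of the one-dimensional Gaussian spaces, and $(\R,\gamma)$ is a non-atomic standard probability space, hence isomorphic to $(\T,\lambda)$. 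Since $\G$ is nonamenable it is infinite, so $\lambda_\R$ is mixing (its matrix coefficients $\ip{\lambda_g\delta_e}{\delta_e}=\delta_{g,e}$ vanish at infinity), in particular weakly mixing; by the ergodicity criterion for Gaussian actions from Section~\ref{sec:gaussian}, $\sigma$ is then ergodic, so the preceding corollary applies with $\pi=\lambda_\R$ and yields an injective homomorphism $H^1(\G,\lambda_\R)\hookrightarrow H^1(\G,\sigma,\T)/\chi(\G)$.

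Next I would invoke the cited equivalence (Bekka--Valette \cite{bekkavalette}, Peterson--Thom \cite{petersonthom}) that for a nonamenable group $\beta_1^{(2)}(\G)\neq 0$ is equivalent to $H^1(\G,\lambda)\neq\{0\}$ for the left-regular representation; viewing $\lambda$ as a real representation it is $\lambda_\R\oplus\lambda_\R$, so $H^1(\G,\lambda)\neq\{0\}$ is equivalent to $H^1(\G,\lambda_\R)\neq\{0\}$. Thus the hypothesis $\beta_1^{(2)}(\G)\neq 0$ gives $H^1(\G,\lambda_\R)\neq\{0\}$, and injectivity of the exponentiation map forces $H^1(\G,\sigma,\T)/\chi(\G)\neq\{0\}$, i.e. $H^1(\G,\sigma,\T)\neq\chi(\G)$, which is the first assertion. (Since both the Bernoulli shift and $\sigma^{\lambda_\R}$ are conjugate, and $H^1(\,\cdot\,,\sigma,\T)$ is a conjugacy invariant, this transfers to the Bernoulli shift as defined in the paper.)

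For the ``in particular'' clause I would argue by contradiction: $\T$ embeds as a closed subgroup of the unitary group of any $\mathrm{II}_1$ factor (say as the scalars), so $\T\in\UU_{\rm fin}$; were $\sigma$ to be $\UU_{\rm fin}$-cocycle superrigid, every cocycle in $Z^1(\G,\sigma,\T)$ would be cohomologous to the cocycle induced by a homomorphism $\G\to\T$, that is, by a character, which says precisely that $H^1(\G,\sigma,\T)=\chi(\G)$ --- contradicting the previous paragraph --- so $\sigma$ is not $\UU_{\rm fin}$-cocycle superrigid. I do not expect a genuine obstacle here: the whole argument is an assembly of results already in place, and the only points that need a line of care are the real-versus-complex bookkeeping relating $H^1(\G,\lambda)$ to $H^1(\G,\lambda_\R)$, the verification that $\lambda_\R$ is weakly mixing (so the preceding corollary is applicable and $\sigma$ is ergodic), and recording why $\T\in\UU_{\rm fin}$.
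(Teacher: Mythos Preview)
Your proposal is correct and follows essentially the same approach as the paper: identify the Bernoulli shift with the Gaussian action of the left-regular representation, apply the preceding corollary's injection $H^1(\G,\lambda)\hookrightarrow H^1(\G,\sigma,\T)/\chi(\G)$, and invoke the Bekka--Valette/Peterson--Thom characterization of $\beta_1^{(2)}(\G)\neq 0$. The paper states this in a single sentence and omits the routine details you spell out (weak mixing of $\lambda_\R$, real-versus-complex bookkeeping, $\T\in\UU_{\rm fin}$), but the argument is the same.
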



\section{Derivations}\label{sec:derivations}

In this section we continue our investigation of deformations, but this time on the infinitesimal level.  

\subsection{Derivations from s-malleable deformations}

Let $(M, \tau)$ be a finite von Neumann algebra, and let $\alpha : \R \rightarrow \Aut(M, \tau)$ be a point-wise strongly continuous one-parameter group of automorphisms. Let $\delta$ be the infinitesimal generator of $\alpha$, \textit{i.e.}, $\exp(t \delta) = \alpha_t$. For $f \in L^1(\R)$ define the bounded operator $\alpha_f : M \rightarrow M$ by $$\alpha_f(x) = \int_{-\infty}^\infty f(s)\alpha_s(x)ds.$$  It can be checked that if $f \in C^1(\R) \cap L^1(\R)$ and $f' \in L^1(\R)$, then $$\delta \circ \alpha_f(x) = -\alpha_{f'}(x).$$  Also if $x\in M \cap D(\delta)$, then we have that $$\alpha_t(x) - x = \int_0^t \delta \circ \alpha_s(x)ds = \int_0^t \alpha_s(\delta(x))ds.$$

\begin{thm} Suppose that for every $\e > 0$, there exists $f \in C^1(\R) \cap L^1(\R)$ such that $f' \in L^1(\R)$ and $\sup_{x \in (M)_1} \|\alpha_f(x) - x\|_2 \leq \e/4$.  Then $\alpha_t$ converges $\| \cdot\|_2$-uniformly to the identity on $(M)_1$ as $t \rightarrow 0$.
\end{thm}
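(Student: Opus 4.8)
The plan is to control $\sup_{x\in(M)_1}\|\alpha_t(x)-x\|_2$ for small $t$ by interposing the averaged map $\alpha_f$ and exploiting that $\alpha_f$ almost commutes with $\alpha_t$ in a quantitatively controlled way. So I would fix $\e>0$ and invoke the hypothesis to get $f\in C^1(\R)\cap L^1(\R)$ with $f'\in L^1(\R)$ and $\sup_{x\in(M)_1}\|\alpha_f(x)-x\|_2\le\e/4$. The first thing to record is that precomposing the averaging operator with $\alpha_t$ merely translates the kernel: writing $f_t(s)=f(s-t)$, the change of variables $u=s+t$ in the defining integral for $\alpha_f$ together with the group law for $\alpha$ gives $\alpha_t\circ\alpha_f=\alpha_{f_t}$.

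Next I would use the telescoping identity
$$\alpha_t(x)-x=\alpha_t\bigl(x-\alpha_f(x)\bigr)+\bigl(\alpha_{f_t}(x)-\alpha_f(x)\bigr)+\bigl(\alpha_f(x)-x\bigr)$$
and bound the three terms uniformly on $(M)_1$. The last term is $\le\e/4$ by the choice of $f$. The first term is also $\le\e/4$: since $\alpha_t\in\Aut(M,\tau)$ is $\tau$-preserving it is an $L^2$-isometry, so $\|\alpha_t(x-\alpha_f(x))\|_2=\|x-\alpha_f(x)\|_2$. For the middle term I would use $\alpha_{f_t}(x)-\alpha_f(x)=\alpha_{f_t-f}(x)$ together with the elementary estimate $\|\alpha_g(y)\|_2\le\|\alpha_g(y)\|\le\|g\|_1\|y\|$, valid for any $g\in L^1(\R)$ and $y\in M$ because $\|\,\cdot\,\|_2\le\|\,\cdot\,\|$ in a tracial von Neumann algebra; this yields $\|\alpha_{f_t}(x)-\alpha_f(x)\|_2\le\|f_t-f\|_1$ for $x\in(M)_1$.

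It then remains to estimate $\|f_t-f\|_1$, and this is exactly where the regularity assumptions $f\in C^1$ and $f'\in L^1$ enter: from $f(s-t)-f(s)=-\int_0^t f'(s-r)\,dr$ and Fubini one obtains $\|f_t-f\|_1\le|t|\,\|f'\|_1$. Hence, if $|t|<\delta:=\e/\bigl(4(\|f'\|_1+1)\bigr)$ the middle term is $\le\e/4$, so that $\sup_{x\in(M)_1}\|\alpha_t(x)-x\|_2\le 3\e/4<\e$ for all such $t$. Since $\e>0$ was arbitrary, this is the asserted $\|\cdot\|_2$-uniform convergence $\alpha_t\to\id$ on $(M)_1$ as $t\to0$. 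I do not expect a genuine obstacle here — the argument is a soft Dixmier-type averaging trick — the only points that need a line of justification being the identity $\alpha_t\circ\alpha_f=\alpha_{f_t}$ and the continuity-of-translation bound $\|f_t-f\|_1\le|t|\,\|f'\|_1$, both of which are routine.
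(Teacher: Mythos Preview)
Your proof is correct and follows the same overall strategy as the paper: telescope through the smoothed element $\alpha_f(x)$, use that $\alpha_t$ is an $L^2$-isometry for the outer terms, and bound the middle term $\|\alpha_t(\alpha_f(x))-\alpha_f(x)\|_2$ by $|t|\,\|f'\|_{L^1}$. The only genuine difference is in how that last bound is obtained. The paper invokes the infinitesimal generator $\delta$: it uses that $\alpha_f(x)\in D(\delta)$ with $\delta(\alpha_f(x))=-\alpha_{f'}(x)$, and then the integral formula $\alpha_t(\tilde x)-\tilde x=\int_0^t\alpha_s(\delta(\tilde x))\,ds$ immediately gives $\|\alpha_t(\tilde x)-\tilde x\|_2\le t\|f'\|_{L^1}$. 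You instead bypass the generator entirely via the translation identity $\alpha_t\circ\alpha_f=\alpha_{f_t}$ together with the elementary $L^1$ estimate $\|f_t-f\|_1\le|t|\,\|f'\|_1$. The two computations are in fact the same one unwound (differentiating the translation in $t$ is exactly what produces $\delta\circ\alpha_f=-\alpha_{f'}$), but your version is slightly more self-contained since it never needs to discuss the domain of $\delta$; the paper's version, on the other hand, fits more naturally into its surrounding framework where $\delta$ is the central object.
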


\begin{proof} We need only show for every $\e > 0$ that there exists some $\eta > 0$ such that for all $t<\eta$, $\sup_{x \in (M)_1} \|\alpha_t(x) - x\|_2 \leq \e $. Let $\tilde x = \alpha_f(x)$. We have that $\|\alpha_t(x) - x\|_2 \leq \|\alpha_t (\tilde x) - \tilde x\|_2 + \e/2$.  Since $\delta \circ \alpha_f$ is defined everywhere, $\delta \circ \alpha_f: M \rightarrow L^2(M,\tau)$ is bounded. In fact, $\|\delta \circ \alpha_f\| \leq \|f'\|_{L^1}$. Now, since $\tilde x \in D(\delta)$, we have $\alpha_t(\tilde x) - \tilde x = \int_0^t \alpha_s(\delta(\tilde x))ds$.  Hence $\|\alpha_t(\tilde x) - \tilde x\|_2 \leq t\|f'\|_{L^1}$. Choosing $\eta = \e(2\|f'\|_{L^1})^{-1}$ does the job.
\end{proof}

\begin{cor}\label{cor:convergence}  If $\varphi_t = \exp(-t\delta^*\delta)$ converges uniformly to the identity as $t \rightarrow 0$, then so does $\alpha_t$.
\end{cor}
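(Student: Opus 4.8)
The plan is to reduce the statement to the preceding theorem, which guarantees $\alpha_t \to \id$ uniformly on $(M)_1$ as soon as, for every $\e > 0$, one can produce an $f \in C^1(\R)\cap L^1(\R)$ with $f' \in L^1(\R)$ and $\sup_{x\in(M)_1}\|\alpha_f(x) - x\|_2 \le \e/4$. Thus the entire task is to manufacture such an $f$ out of the hypothesis that $\varphi_t = \exp(-t\delta^*\delta)$ converges uniformly to the identity on $(M)_1$ as $t \to 0$.

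The mechanism is that $\alpha$ and $\varphi$ are both functions of one and the same generator, and the right averaging of $\alpha$ reproduces $\varphi$. Since $\alpha$ is $\tau$-preserving it extends to a strongly continuous one-parameter unitary group on $L^2(M,\tau)$, whose generator $\delta$ is skew-adjoint --- in particular already closed, so that $\overline\delta = \delta$ and $\delta^*\delta = -\delta^2 \ge 0$. Writing $\int \lambda\, dE_\lambda$ for the spectral resolution of the self-adjoint operator $i\delta$, we have $\alpha_s = \int e^{-is\lambda}\,dE_\lambda$ and $\varphi_t = \int e^{-t\lambda^2}\,dE_\lambda$. I would then take $f = f_t$ to be the centered Gaussian density of variance $t > 0$, namely $f_t(s) = (2\pi t)^{-1/2}\exp(-s^2/2t)$: it belongs to $C^1(\R)\cap L^1(\R)$ (indeed to $C^\infty$), its derivative $f_t'(s) = -(s/t)\,f_t(s)$ lies in $L^1(\R)$, and, the Fourier transform of a Gaussian being a Gaussian, $\int f_t(s)\,e^{-is\lambda}\,ds = e^{-t\lambda^2/2}$. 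Interchanging this scalar integral with the spectral integral --- justified by the spectral theorem applied to the unitary group $\{\alpha_s\}$, i.e.\ by Fubini against the spectral measure --- yields the operator identity
$$
\alpha_{f_t} \;=\; \int_{-\infty}^\infty f_t(s)\,\alpha_s\, ds \;=\; \int e^{-t\lambda^2/2}\,dE_\lambda \;=\; \varphi_{t/2}
$$
on $L^2(M,\tau)$, hence on $M$; this is nothing but Bochner's subordination of the heat semigroup to the unitary group through its Gaussian kernel.

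Granting the identity, the corollary follows at once: given $\e > 0$, uniform convergence of $\varphi$ supplies a $t > 0$ with $\sup_{x\in(M)_1}\|\varphi_{t/2}(x) - x\|_2 \le \e/4$, and then $f_t$ witnesses the hypothesis of the preceding theorem for this $\e$, so that theorem yields uniform convergence of $\alpha_t$ to the identity on $(M)_1$. (Note that complete positivity of $\varphi_t$ is never used; only that $\exp(-t\delta^*\delta)$ is a well-defined bounded operator, which is automatic.) The one point requiring care --- the ``main obstacle,'' modest as it is --- is the verification of $\alpha_{f_t} = \varphi_{t/2}$: one must check that the generator of the automorphism group, viewed on $L^2(M,\tau)$, is genuinely skew-adjoint, so that $\delta^*\delta = -\delta^2$ and the two families $\alpha$ and $\varphi$ really are functions of a single self-adjoint operator, and that the vector-valued average $\int f_t(s)\,\alpha_s\, ds$ may be evaluated through that spectral calculus. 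Both are routine.
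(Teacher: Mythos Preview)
Your proposal is correct and takes essentially the same approach as the paper: choose $f_t$ to be a Gaussian density, identify $\alpha_{f_t}$ with the heat semigroup $\varphi$ via the spectral calculus (the paper simply says ``by completing the square''), and invoke the preceding theorem. The only difference is a harmless normalization --- the paper takes $f_t(s) = (4\pi t)^{-1/2}\exp(-s^2/4t)$ to get $\alpha_{f_t} = \varphi_t$ directly, whereas your variance-$t$ Gaussian gives $\alpha_{f_t} = \varphi_{t/2}$ --- which is immaterial to the argument.
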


\begin{proof} Let $f_t(s) = \frac{1}{\sqrt{4 \pi t}}\exp(-s^2/4t)$; then, $\varphi_t(x) = \int_{-\infty}^\infty f_t(s)\alpha_s(x)ds$ follows by completing the square.  
\end{proof}


\subsection{Tensor products of derivations}\label{sec:tensorproduct}
We describe here the notion of a tensor product of derivations; see also Section 6 of \cite{petersonl2}.   

Consider $N_i$, $i \in I$ a family of finite von Neumann algebras with normal faithful traces $\tau_i$.  If $\delta_i:N_i \rightarrow \HH_i$ is a family of closable 
real derivations into Hilbert bimodules $\HH_i$ with domains $D(\delta_i)$ then we may consider the dense $*$-subalgebra 
$D(\delta) = \otimes_{i \in I}^{\rm alg} D(\delta_i) \subset N = \overline \otimes_{i \in I} N_i$.

We denote by $\hat{N_j}$ the tensor product of the $N_i$'s obtained by omitting the $j$th index so that we have a natural identification 
$N = \hat{N_j} \overline \otimes N_j$ for each $j \in I$.  Let $\HH = \bigoplus_{j \in I} \HH_j \overline \otimes L^2(\hat{N_j})$ which is naturally 
a Hilbert bimodule because of the identification $N = \hat{N_j} \overline \otimes N_j$.

The tensor product of the derivations $\delta_i$, $i \in I$ is defined to be the derivation $\delta = \bigotimes_{i \in I} \delta_i : D(\delta) \rightarrow \HH$
which satisfies 
$$
\delta( \otimes_{i \in I} x_i ) = \bigoplus_{j \in I} ( \delta_j(x_j) \otimes_{i \in I, i \not= j} x_i ).
$$
This is well defined as only finitely many of the
$x_i$'s are not equal to $1$ and hence the right hand side is a finite sum.

If $\Phi_i^t = \exp(-t\delta_i^*\overline{\delta_i})$ is the semigroup deformation associated to $\delta_i$ then one easily checks that the semigroup 
deformation associated to $\delta$ is $\Phi^t = \bigotimes_{i \in I} \Phi_i^t : N \rightarrow N$.  A similar formula holds for the resolvent deformation.
Note that by viewing the Hilbert bimodule associated with $\Phi^t$ and using the usual ``averaging trick'' (e.g. Theorem 4.2 in \cite{popacorrespondence}) 
it follows that $\Phi^t$ will converge uniformly in $\| \cdot \|_2$ to the identity on $(N)_1$ if and only if each $\Phi_t^i$ converges uniformly in $\| \cdot \|_2$
to the identity on $(N_i)_1$ and moreover this convergence is uniform in $i \in I$.

%

\subsection{Derivations from generalized Bernoulli shifts}

We use here the notation in Section \ref{sec:gaussian} above.  Given a real Hilbert space $\HH$, we consider the new Hilbert space
$\HH' = \R \Omega_0 \oplus \HH$.  If $\xi \in \HH$ is a non-zero element we denote by $P_\xi$ the rank one projection onto $\xi$.
We denote by $\tilde\HH$ the tensor product (complex) Hilbert space  
$\HH \otimes \SSS(\HH')$

Let $N \in \N \cup \{ \infty \}$ be the dimension of $\HH$ and consider an orthonormal basis 
$\beta = \{ \xi_n \}_{i =1}^N$ for $\HH$.  We then define a left action of $A$, the von Neumann
algebra generated by the spectral projections of $s(\xi)$, $\xi \in \HH$, on $\tilde\HH$ such that for each $\xi \in \HH$, $s(\xi)$ acts on the left (as an unbounded operator) by
$$
\ell_\beta(s(\xi)) = {\rm id} \otimes s(\xi).
$$
We also define a right action of $A$ on $\tilde\HH$ such that for each $\xi \in \HH$, $s(\xi)$ acts on the right by extending linearly the formula
\begin{equation}\label{eq:bimodulestructure}
r_\beta(s(\xi)) (\xi_n \otimes \eta) = P_{\xi_n}(\xi) \otimes S(\Omega_0)\eta + \xi_n \otimes s(\xi - P_{\xi_n}(\xi)) \eta, 
\end{equation}
for each $1 \leq n \leq N, \eta \in \SSS(\HH')$.

These formulas define unbounded self-adjoint operators on $\tilde\HH$ in general; however,
by functional calculus they extend to give commuting normal actions of $A$ on $\tilde\HH$.

Moreover, if $T \in \OO(\HH) \subset \OO(\HH')$, then we have that for any $\xi \in \HH$
$$
\ell_{T\beta}( s(T\xi) ) = \ell_{T\beta}(\sigma_T(s(\xi))) = {\rm Ad}(T \otimes T^\SSS) \ell_\beta(s(\xi)).
$$
Also,
$$
r_{T\beta}(s(T\xi) ) = r_{T\beta}(\sigma_T(s(\xi))) = {\rm Ad}(T \otimes T^\SSS) (r_\beta(s(\xi))).
$$
From here on we will denote the left action of $A$ on $\tilde\HH$ by $\ell_\beta(a) x = a \cdot_\beta x$
and the right action by $r_\beta(a) x = x \cdot_\beta a$.
By extending the formulas above to $A$ we have the following lemma.

\begin{lem}\label{lem:consistency}
Using the notation above, consider the inclusion $\OO(\HH) \subset \UU(\tilde\HH)$ given by 
$T \mapsto \tilde T = T \otimes T^\SSS$.
Then for each $T \in \OO(\HH)$, $x, y \in A$, and $\tilde\xi \in \tilde\HH$, we have
$\tilde T( x \cdot_\beta \tilde\xi \cdot_\beta y) = \sigma_T(x) \cdot_{T\beta} ( \tilde T\tilde\xi)  \cdot_{T\beta} \sigma_T(y)$.
\end{lem}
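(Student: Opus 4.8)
The plan is to reduce the identity to the generators, verify it there, and then extend by normality of all the maps involved. Since the $*$-algebra generated by the operators $s(\xi)$, $\xi \in \HH$, is weakly dense in $A$, and since $\sigma_T$ is a normal $*$-automorphism while both $\ell_\beta, r_\beta, \ell_{T\beta}, r_{T\beta}$ extend to normal actions of $A$ on $\tilde\HH$, it suffices to check the formula when $x = s(\xi_1) \dots s(\xi_k)$ and $y = s(\eta_1) \dots s(\eta_m)$ are monomials in creation/annihilation-type self-adjoint operators, and moreover by linearity and the derivation-free multiplicativity of the actions it suffices to treat $x = s(\xi)$, $y = s(\eta)$ for single $\xi, \eta \in \HH$, with $\tilde\xi$ ranging over a spanning set of $\tilde\HH$, e.g. elementary tensors $\xi_n \otimes \zeta$ with $\zeta \in \SSS(\HH')$.

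First I would record the two equivariance identities already displayed in the excerpt, namely $\ell_{T\beta}(s(T\xi)) = \Ad(\tilde T)\, \ell_\beta(s(\xi))$ and $r_{T\beta}(s(T\xi)) = \Ad(\tilde T)\, r_\beta(s(\xi))$, where $\tilde T = T \otimes T^\SSS$; these are the heart of the matter. Rewriting them as $\tilde T\, \ell_\beta(s(\xi)) = \ell_{T\beta}(\sigma_T(s(\xi)))\, \tilde T$ and similarly on the right (using $\sigma_T(s(\xi)) = s(T\xi)$), I then compute, for $\tilde\xi \in \tilde\HH$,
$$
\tilde T\big( s(\xi) \cdot_\beta \tilde\xi \cdot_\beta s(\eta) \big) = \tilde T\, \ell_\beta(s(\xi))\, r_\beta(s(\eta))\, \tilde\xi = \ell_{T\beta}(s(T\xi))\, \tilde T\, r_\beta(s(\eta))\, \tilde\xi,
$$
and since left and right actions commute I may pull $\tilde T$ past $r_\beta(s(\eta))$ using the right-hand equivariance identity, obtaining $\ell_{T\beta}(s(T\xi))\, r_{T\beta}(s(T\eta))\, \tilde T\tilde\xi = \sigma_T(s(\xi)) \cdot_{T\beta} (\tilde T\tilde\xi) \cdot_{T\beta} \sigma_T(s(\eta))$, which is the claim for single generators.

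Passing from single generators to monomials $x = s(\xi_1)\dots s(\xi_k)$ is then immediate: $\tilde T$ conjugates each factor as above, and since conjugation by $\tilde T$ is multiplicative, $\tilde T\, \ell_\beta(x) \tilde T^* = \ell_{T\beta}(\sigma_T(x))$ and likewise for $r_\beta$; the desired identity follows by inserting $\tilde T^* \tilde T = 1$ between the factors. Finally, to reach arbitrary $x, y \in A$ I would invoke that $\ell_\beta$ and $r_\beta$ are \emph{normal} representations of $A$ (as stated in the excerpt just before the lemma), so that both sides of $\tilde T( x \cdot_\beta \tilde\xi \cdot_\beta y) = \sigma_T(x) \cdot_{T\beta} (\tilde T \tilde\xi) \cdot_{T\beta} \sigma_T(y)$ are separately normal (weak-$*$ to weak continuous, after pairing against a fixed vector) in $x$ and in $y$; a standard two-step Kaplansky-density / weak-continuity argument then upgrades the identity from the weakly dense $*$-algebra of monomials to all of $A$.

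I expect the only genuinely delicate point to be the unboundedness of $s(\xi)$ and of the operators $\ell_\beta(s(\xi))$, $r_\beta(s(\xi))$: the manipulations $\tilde T\, \ell_\beta(s(\xi)) = \ell_{T\beta}(s(T\xi))\, \tilde T$ are equalities of unbounded operators and must be read on a common core (e.g. the algebraic symmetric Fock space $\otimes^{\rm alg}$), so strictly the generator-level computation should be phrased for $\tilde\xi$ in that core and the bimodule actions understood as in the excerpt's "by functional calculus they extend to give commuting normal actions." Once one passes to the bounded unitaries $u(\xi_1,\dots,\xi_k)$ generating $A$ — where the equivariance reads $\sigma_T(u(\xi_1,\dots,\xi_k)) = \Ad(T^\SSS) u(\xi_1,\dots,\xi_k)$ and lifts to $\tilde T$ — all the continuity/density arguments are routine and the unboundedness disappears. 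So the real content is the bookkeeping that the left and right $A$-actions on $\tilde\HH$ are intertwined by $\tilde T = T \otimes T^\SSS$ in the manner of the two displayed formulas; everything else is formal.
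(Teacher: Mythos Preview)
Your proposal is correct and follows exactly the route the paper takes: the paper records the two generator-level equivariance identities $\ell_{T\beta}(s(T\xi)) = \Ad(\tilde T)\,\ell_\beta(s(\xi))$ and $r_{T\beta}(s(T\xi)) = \Ad(\tilde T)\,r_\beta(s(\xi))$ and then simply writes ``By extending the formulas above to $A$ we have the following lemma,'' with no further argument. Your write-up supplies precisely the details the paper omits---the passage to monomials by multiplicativity, the extension to $A$ by normality of the actions, and the caveat about unbounded operators handled via a common core or via the bounded generators $u(\xi_1,\dots,\xi_k)$---so there is nothing to correct.
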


\begin{rem}
While we will not use this in the sequel, an alternate way to view the $A$-$A$ Hilbert bimodule structure on $\tilde\HH$ is as follows.
Given our basis $\beta = \{ \xi_n \}_{n=1}^N \subset \HH$,
consider the probability space $(X, \mu) = \Pi_n (\R, g)$ where $g$ is the Gaussian measure on $\R$.  We can identify 
$A = L^\infty(X, \mu)$, and we denote by $\pi_n \in L^2(X, \mu)$ the projection onto the $n$th copy of $(\R, g)$ so that 
the $\pi_n$'s are I.I.D. Gaussian random variables.  

We embed $\HH$ into $L^2(X, \mu)$ linearly  by the map $\eta$ such that $\eta(\xi_n) = \pi_n$ given an orthogonal 
transformation $T \in \OO(\HH)$, we associate to $T$ the unique measure-preserving automorphism $\sigma_T \in {\rm Aut}(A)$
such that $\sigma_T( \eta(\xi) ) = \eta( T\xi )$, for all $\xi \in \HH$.

For each $k$ we denote by 
$$
A_k = ( \bigotimes_{n < k} L^\infty(\R, g)) \otimes ( L^\infty(\R, g) \otimes L^\infty(\R, g) ) \otimes ( \bigotimes_{n > k} L^\infty(\R, g) ),
$$
and we view $L^2(A_k)$ as an $A$-$A$ bimodule so that 
$$
(\otimes_{n} a_n) \cdot x = ( \bigotimes_{n < k} a_n \otimes ( a_k \otimes 1 ) \otimes  \bigotimes_{n > k} a_n ) x
$$
and 
$$
x \cdot (\otimes_{n} a_n) = x( \bigotimes_{n < k} a_n \otimes ( 1 \otimes a_k ) \otimes  \bigotimes_{n > k} a_n ),
$$ 
for $x \in L^2(A_k)$.

Consider the $A$-$A$ Hilbert bimodule $\bigoplus_k L^2(A_k)$, and note that it is canonically identified with the Hilbert space 
$\HH \otimes L^2(A_1)\cong \HH \otimes L^2(\R, g) \otimes L^2(A) \cong \tilde\HH$ in a way which preserves the $A$-$A$ bimodule structure.  
Under this identification the inclusion $\OO(\HH) \subset \UU( \HH \otimes L^2(\R, g) \otimes L^2(A))$ becomes $T \mapsto T \otimes {\rm id} \otimes \sigma_T$.
\end{rem}

We now consider the algebra $A_0 \subset L^2(A)$ of square summable operators generated by $s(\xi)$, $\xi \in \HH$, and define
a derivation $\delta_\beta$ on $A_0$ by setting 
$$
\delta_\beta(s(\xi)) = \xi \otimes \Omega \in \tilde\HH,
$$ for each $\xi \in \HH$.
Note that the formula for $\delta_\beta(s(\xi))$ does not depend on the basis $\beta$, but the bimodule structure that we are imposing
on $\HH$ does depend on $\beta$.
If $\xi_0, \xi_1, \ldots, \xi_k \in \beta$ such that $\xi_0$ is orthogonal to the vectors $\xi_1, \ldots, \xi_k$ then it follows that
$\delta_\beta(s(\xi_1) \cdots s(\xi_k))$ is a $s(\xi_0)$-central vector and hence by induction on $k$ it follows that $\delta_\beta$ is well defined.
Also, since $\delta_\beta$ extends to a bounded operator on $\overline{\rm sp} \{ s(\xi_1) \cdots s(\xi_k) \ | \ \xi_1, \ldots, \xi_k \in \HH \}$ for each $k$ it follows 
that $\delta_\beta$ is a closable operator and if we still denote by $\delta_\beta$ the closure of this operator we have that
$x \mapsto \| \delta_\beta(x) \|^2$ is a quantum Dirichlet form on $L^2(A)$ (see \cite{davieslindsay, sauvageot1, sauvageot2}).

In particular, it follows from \cite{davieslindsay} that $D(\delta_\beta) \cap A$ is a weakly dense $*$-subalgebra and 
${\delta_\beta}_{|D(\delta_\beta) \cap A}$ is a derivation.

Note that if we identify $\tilde\HH$ with $\bigoplus_k L^2(A_k)$ as above then $\delta_\beta$
can also be viewed as the tensor product derivation 
$\delta_\beta = \bigotimes_k \delta_k$ where $\delta_k : L^2(\R, g) \rightarrow L^2(\R, g) \overline \otimes L^2(\R, g)$
is the difference quotient derivation for each $k$, \textit{i.e.}, $\delta_k(f)(x, y) = \frac{f(x) - f(y)}{x - y}$.  

\begin{lem}\label{lem:invariant}
Using the above notation, $\delta_\beta$ is a densely defined closed real derivation, 
$s(\HH) \subset D(\delta_\beta)$, $\delta_\beta \circ s: \HH \rightarrow \tilde\HH$ is an isometry,
and for all $T \in \OO(\HH)$, $\sigma_T(D(\delta_\beta)) = D(\delta_{T\beta})$, and
$\delta_{T\beta}(\sigma_T(a)) = \tilde T(\delta_\beta(a) )$, for all $a \in D(\delta_\beta)$.
\end{lem}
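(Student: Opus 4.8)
The plan is to dispatch the claims in the order stated; the first three are largely prepared in the discussion preceding the statement, so the real work is the $\OO(\HH)$-equivariance at the end.

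That $\delta_\beta$ is a densely defined closable derivation, whose closure restricts to a derivation on $D(\delta_\beta)\cap A$, was recorded above: density of $D(\delta_\beta)\supseteq A_0$ follows from Lemma~\ref{lem:sbasis}, since $\mathrm{span}(\mathcal S(\Xi))$ is dense in $\SSS(\HH)=L^2(A)$, and closability together with the quantum Dirichlet form property were already noted, with $\delta_\beta$ now denoting the closure. To see $\delta_\beta$ is \emph{real} I would produce the conjugation $\JJ$ from the identification $\tilde\HH\cong\bigoplus_k L^2(A_k)$ of the Remark, taking $\JJ=\bigoplus_k J_{A_k}$, the direct sum of the modular conjugations. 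The identity $\JJ(x\xi y)=y^*\JJ(\xi)x^*$ is then immediate from the description there of the $A$-$A$ bimodule structure on each $L^2(A_k)$, since left and right $A$ act by multiplications within $A_k$ and $J_{A_k}(pxq)=q^*J_{A_k}(x)p^*$. For $\JJ(\delta_\beta(z))=\delta_\beta(z^*)$ it suffices, by $\C$-antilinearity, the Leibniz rule, and closedness, to check $z=s(\xi)$ with $\xi$ a vector of the real orthonormal basis $\beta$: then $z^*=z$, and in the $\bigoplus_k L^2(A_k)$ picture $\delta_\beta(s(\xi))$ lies in a single summand as the trace vector $\widehat{1_{A_k}}$ (the difference quotient of the $k$-th coordinate function being the constant $1$), hence is fixed by $\JJ$. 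For the remaining two elementary assertions: $s(\HH)\subset A_0\subseteq D(\delta_\beta)$ by construction, and, with the renormalization on $\SSS(\HH)$, $\|\delta_\beta(s(\xi))\|_{\tilde\HH}=\|\xi\otimes\Omega\|=\|\xi\|\,\|\Omega\|_{\SSS(\HH')}=\|\xi\|=\|s(\xi)\|_2$, so $\delta_\beta\circ s$ is an isometry.

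For the equivariance, fix $T\in\OO(\HH)$ and set $\delta'=\tilde T\circ\delta_\beta\circ\sigma_T^{-1}$, an operator on $L^2(A)$ with domain $\sigma_T(D(\delta_\beta))$; since $\tilde T,\sigma_T$ are unitary, $\delta'$ is closed. I would first verify that $\delta'$ is a derivation of $A$ into $\tilde\HH$ with the \emph{$T\beta$}-bimodule structure: expanding $\delta_\beta\big(\sigma_T^{-1}(a)\sigma_T^{-1}(b)\big)$ by Leibniz for $\delta_\beta$ and then pushing $\tilde T$ through the module actions by Lemma~\ref{lem:consistency}, $\tilde T(x\cdot_\beta\tilde\xi\cdot_\beta y)=\sigma_T(x)\cdot_{T\beta}(\tilde T\tilde\xi)\cdot_{T\beta}\sigma_T(y)$, gives $\delta'(ab)=a\cdot_{T\beta}\delta'(b)+\delta'(a)\cdot_{T\beta}b$. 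On generators, $\delta'(s(\eta))=\tilde T\big(\delta_\beta(s(T^{-1}\eta))\big)=\tilde T\big((T^{-1}\eta)\otimes\Omega\big)=\eta\otimes\Omega=\delta_{T\beta}(s(\eta))$, using $\tilde T=T\otimes T^\SSS$ and $T^\SSS\Omega=\Omega$. Since $\sigma_T$ maps $\{s(\xi):\xi\in\HH\}$ onto itself ($T\HH=\HH$), $\sigma_T(A_0)=A_0$, so $\delta'$ and $\delta_{T\beta}$ are derivations on $A_0$ for the $T\beta$-structure agreeing on the algebra generators, whence $\delta'|_{A_0}=\delta_{T\beta}|_{A_0}$. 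As $\delta'$ is a closed extension of $\delta_{T\beta}|_{A_0}$, and $\delta_{T\beta}$ is by definition the closure of $\delta_{T\beta}|_{A_0}$, we get $\delta_{T\beta}\subseteq\delta'$; applying this with $T$ replaced by $T^{-1}$ and $\beta$ by $T\beta$ yields the reverse inclusion, so $\delta'=\delta_{T\beta}$. This is exactly the two remaining assertions, $D(\delta_{T\beta})=\sigma_T(D(\delta_\beta))$ and $\delta_{T\beta}(\sigma_T(a))=\tilde T(\delta_\beta(a))$.

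The only genuinely substantive step is the equivariance, and within it the delicate point is ensuring the target bimodule structure is correctly transported from $\cdot_\beta$ to $\cdot_{T\beta}$ when $\tilde T$ is moved past the module actions — precisely the content Lemma~\ref{lem:consistency} supplies — together with the short core argument promoting coincidence on $s(\HH)$ to coincidence on all of $D(\delta_{T\beta})$. I expect no difficulty beyond this bookkeeping.
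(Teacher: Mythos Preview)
Your proposal is correct and follows essentially the same approach as the paper: check the equivariance $\delta_{T\beta}\circ\sigma_T=\tilde T\circ\delta_\beta$ on the generators $s(\xi)$, extend to $A_0$ via the Leibniz rule together with Lemma~\ref{lem:consistency}, and then pass to the full domain by the core property and unitarity of $\tilde T$. Your treatment is in fact more thorough than the paper's --- you supply the reality argument via $\JJ=\bigoplus_k J_{A_k}$ in the $\bigoplus_k L^2(A_k)$ picture (which the paper's proof omits entirely, despite being part of the statement) and you make the closure step explicit with the symmetric $T\leftrightarrow T^{-1}$ argument, whereas the paper simply asserts that unitarity of $\tilde T$ and the core property suffice.
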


\begin{proof}
The fact that $s(\HH) \subset D(\delta_\beta)$, and that $\delta_\beta \circ s$ is an isometry follows from 
the formula $\delta_\beta(s(\xi)) = \xi \otimes \Omega$ above.  

Moreover for $\xi \in \HH$ we have 
$$
\delta_{T\beta}( \sigma_T(s(\xi))) = T\xi \otimes \Omega
$$
$$
= (T \otimes T^\SSS)( \xi \otimes \Omega ) = \tilde T \delta_\beta ( s(\xi) ).
$$
By Lemma \ref{lem:consistency} this formula then extends to $A_0$, and since $\tilde T$ acts on $\tilde\HH$ unitarily 
and $A_0$ is a core for $\delta_\beta$ we have that 
$\sigma_T(D(\delta_\beta)) = D(\delta_{T\beta})$ and this formula remains valid for $a \in D(\delta_\beta)$.
\end{proof}

Given an action of a countable discrete group $\G$ on a countable set $S$ we may consider the generalized 
Bernoulli shift action of $\G$ on $(X, \mu) = \Pi_{s \in S} (\R, g)$ given by $\g (r_s)_{s \in S} = (r_{\g^{-1}s})_{s \in S}$.
If we set $\HH = \ell^2S$ and consider the corresponding representation $\pi:\G \rightarrow \UU(\HH)$ 
then the generalized Bernoulli shift can be viewed as the Gaussian action corresponding to $\pi$.
Moreover we have that the canonical basis $\beta = \{ \delta_s \}_{s \in S}$ is invariant to 
the representation, \textit{i.e.}, $\pi_\g\beta = \beta$, for all $\g \in \G$.

In this case by Lemma \ref{lem:invariant} we have that $D(\delta_\beta)$ is $\sigma_\g$ invariant for all $\g \in \G$ 
and $\delta_{\beta}(\sigma_\g(a)) = \tilde\pi_\g( \delta_\beta(a) )$, for all $\g \in \G$, $a \in D(\delta_\beta)$, 
where $\tilde\pi:\G \rightarrow \UU(\tilde\HH)$ is the unitary representation given by $\tilde\pi = \pi \otimes \pi^\SSS$.
If we denote by $N = A \rtimes \G$ the corresponding group-measure space construction
then using Lemma \ref{lem:consistency} we may define an $N$-$N$ Hilbert bimodule structure on $\KK = \tilde\HH \otimes \ell^2\G$ which satisfies
$$
(a u_{\g_1} ) (\xi \otimes \delta_{\g_0}) (b u_{\g_2}) = (a \cdot_\beta (\tilde\pi_{\g_1}\xi) \cdot_\beta \sigma_{\g_1\g_0}(b) ) \otimes \delta{\g_1\g_0\g_2},
$$
for all $a, b \in A$, $\g_0, \g_1, \g_2 \in \G$, and $\xi \in \tilde\HH$.
We may then extend $\delta_\beta$ to a closable derivation
$\delta: \ast$-${\rm Alg}(D(\delta_\beta) \cap A, \G) \rightarrow \KK$ such that
$\delta(au_\g) = \delta_{\beta}(a) \otimes u_\g$, for all $a \in D(\delta_\beta)$, $\g \in \G$.

As above we denote by  $\zeta_\alpha: N \rightarrow N$ the unital, symmetric, c.p. resolvent maps given by
$\zeta_\alpha = ( \alpha / (\alpha + \delta^*\overline\delta) )^{1/2}$, for $\alpha > 0$.

Note that if $M$ is a finite von Neumann algebra then we let $\G$ act on $M$ trivially and we may extend the derivation $\delta$ to 
$(A \overline \otimes M) \rtimes \G \cong (A \rtimes \G) \overline \otimes M$ by considering the tensor product derivation of $\delta$ 
with the trivial derivation (identically $0$) on $M$.  In this case the corresponding deformation of resolvent maps is just $\zeta_\alpha \otimes {\rm id}$.

\begin{lem}\label{lem:deformation}
Consider Ioana's deformation $\alpha_t$ on $A$ corresponding to generalized Bernoulli shift as described above in Section \ref{sec:chifanioana}.
If $M$ is a finite von Neumann algebra and $B \subset (A \overline\otimes M) \rtimes \G$ is a subalgebra such that 
$\zeta_\alpha$ converges uniformly to the identity on $(B)_1$ as $\alpha \rightarrow 0$
then $\alpha_t$ converges uniformly to the identity on $(B)_1$ as $t \rightarrow 0$.
\end{lem}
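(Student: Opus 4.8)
The plan is to transfer the uniform convergence hypothesis from the deformation attached to $\delta$ to Ioana's deformation $\alpha_t$, by reducing everything to a comparison of Dirichlet forms on the operator-norm unit ball. First I would pass from $\zeta_\alpha$ to the semigroup $\Phi^t = \exp(-t\delta^*\overline\delta)$: as recalled in the preliminaries these two deformations converge uniformly on exactly the same sets (the argument is purely spectral, so it applies verbatim over $(B)_1$), so the hypothesis is equivalent to $\Phi^t \to \id$ uniformly on $(B)_1$ as $t \to 0$. On the other side, Ioana's construction provides an s-malleable deformation $(\alpha,\beta)$ of $A$ — extended by the identity over $M$ and $\G$ — with $\widetilde N^\beta = N$ and $\alpha_t\circ\beta = \beta\circ\alpha_{-t}$. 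These two relations give the transversality estimate $\|\alpha_{2t}(x) - x\|_2 \le 2\,\|\alpha_t(x) - E_N(\alpha_t(x))\|_2$ for $x \in N$, and since $\alpha_t$ is trace-preserving, $\|\alpha_t(x) - E_N(\alpha_t(x))\|_2^2 = \|x\|_2^2 - \|\psi_t(x)\|_2^2$ for the symmetric, unital, completely positive maps $\psi_t := E_N\circ\alpha_t|_N$. Hence it suffices to prove that $\psi_t \to \id$ uniformly on $(B)_1$ as $t \to 0$.

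Next I would identify $\psi_t$ explicitly. Using freeness of the adjoined copies of $L(\Z)$ from the base (every reduced word of positive length is killed by the conditional expectation onto the base), $\psi_t$ is the infinite tensor product over $s \in S$ of the maps $f \mapsto \tau(f) + \rho(t)\,(f - \tau(f))$ on $L^\infty(\R,g)$, extended by the identity on $M$ and on $\G$, with $\rho(t) = \bigl(\tfrac{\sin \pi t}{\pi t}\bigr)^2 \to 1$. Equivalently $\psi_t = \exp(-u(t)\,L)$, where $u(t) = -\log\rho(t) \to 0$ and $L$ is the ``length'' operator which acts as the scalar $m$ on the closed span $\LL_m$ of tensors supported on exactly $m$ coordinates (with $M$ and $\G$ contributing length $0$). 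The structural point is that $L = \partial^*\overline\partial$ for the tensor-product derivation $\partial = \bigotimes_s \partial_s$, $\partial_s(f) = \tfrac1{\sqrt2}(f\otimes 1 - 1\otimes f)$; in particular $\partial$ takes values in the \emph{same} $N$-$N$ bimodule $\KK$ as $\delta$ and differs from $\delta$ only by an unbounded, bimodular multiplier $M$ — given on each coordinate by the difference of the left and right actions of the corresponding Gaussian generator — so that $L = \tfrac12\,\delta^* M^2\overline\delta$. Thus the whole lemma reduces to the implication: uniform convergence of $\exp(-t\delta^*\overline\delta)$ on $(B)_1$ $\Longrightarrow$ uniform convergence of $\exp(-v L)$ on $(B)_1$ (for a reparametrisation $v=v(t)\to0$), both deformations being block-diagonal for the grading $L^2(N) = \bigoplus_m \LL_m$.

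For this last step I would argue on the operator-norm unit ball, not at the operator level: on each $\LL_m$ with $m\ge 1$ the spectrum of $\delta^*\overline\delta$ is unbounded and accumulates at $0$, while $L$ is the scalar $m$, so $\delta^*\overline\delta$ and $L$ are not comparable as quadratic forms on $L^2(N)$. What does hold, and what I would establish, is a Poincar\'e-type inequality on contractions: for each coordinate $s$ and every $b \in (N)_1$ one has $\|\overline{\delta_s}\, b\|_2^2 \ge \Theta\bigl(\|b - E^{(s)}b\|_2^2\bigr)$ for a fixed concave modulus $\Theta$ with $\Theta(0)=0$ and $\Theta>0$ on $(0,\infty)$, obtained by comparing the Gaussian difference-quotient form of a one-variable contraction with its $L^2$-variance through a fractional Rellich/Poincar\'e estimate at a scale $R$ balanced against the Gaussian tail $e^{-R^2/2}$, and then applied fibrewise in the $s$-th variable. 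Summing over $s$ and invoking the algebra structure of $B$ (the Leibniz rule for $\delta$ on products of unitaries of $B$, to control terms with several coordinates simultaneously active) converts the hypothesis into uniform convergence of $\psi_t$ on $(B)_1$; together with the first two steps this yields $\alpha_t \to \id$ uniformly on $(B)_1$. I expect this final step to be the main obstacle: $\delta$ and $\partial$ govern the same bimodule $\KK$ but differ by the unbounded multiplier $M$, so the passage between their Dirichlet forms must be made uniform over the \emph{algebra} $B$ rather than over a mere subspace, and in particular the per-coordinate tail errors created by truncating $M$ have to be arranged to sum.
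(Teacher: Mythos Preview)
Your route diverges substantially from the paper's and, as you yourself flag, the last step is where the argument is not complete.

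The paper does not pass through transversality or an explicit identification of $\psi_t$. Instead it observes that both $\delta$ and the infinitesimal generator of Ioana's $\alpha_t$ are tensor products over $S$ of single-coordinate derivations, and it compares the two directly at the level of resolvents: one obtains an inequality of the form $\tau(\zeta_\alpha(a)a^*)\le 2\,\tau(\zeta_\alpha^0(a)a^*)$ (with $\zeta_\alpha^0$ the resolvent for $\alpha_t$), then invokes Lemma~2.1 of \cite{petersonl2} to pass from one resolvent family to the other, and finally Corollary~\ref{cor:convergence} to go from the semigroup $\exp(-t\delta_0^*\overline{\delta_0})$ back to $\alpha_t$. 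No Poincar\'e-type estimate is needed because the comparison happens between the resolvents themselves, where the tensor-product structure lets one reduce to a single coordinate and check a numerical inequality.

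Your first three reductions (resolvent $\Leftrightarrow$ semigroup; s-malleability $\Rightarrow$ transversality $\Rightarrow$ it suffices to control $\psi_t=E_N\circ\alpha_t|_N$; identification of $\psi_t$ as $\exp(-u(t)L)$ with $L$ the support-length operator) are correct and give a clean alternative front end. The difficulty is entirely in your last paragraph, and it is a genuine gap rather than a routine detail. The single-coordinate inequality you sketch is in fact provable essentially as you indicate: for $\|f\|_\infty\le 1$ one has
\[
\|\delta_s f\|_2^2 \;\ge\; \frac{1}{R^2}\Bigl(2\,\mathrm{Var}_s(f)-4\,\mathbb P(|X-Y|>R)\Bigr),
\]
and optimizing in $R$ against the Gaussian tail yields $\|\delta_s f\|_2^2\ge \Theta(\mathrm{Var}_s(f))$ with $\Theta(t)\sim t/\log(1/t)$. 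But this $\Theta$ is concave with $\Theta(t)/t\to 0$ as $t\to 0$, and that is exactly what prevents the summation you need. If $b$ spreads its mass over many coordinates, say $a_s:=\|b-E^{(s)}b\|_2^2$ equals some small $a$ on $n\sim 1/a$ coordinates, then $\sum_s a_s=\langle Lb,b\rangle$ is of order $1$ while $\sum_s\Theta(a_s)=n\,\Theta(a)\sim 1/\log(1/a)\to 0$. So controlling $\sum_s\|\delta_s b\|_2^2$ (or even the full semigroup $\Phi^t$) via your inequality gives no control on $\langle Lb,b\rangle$. Your appeal to ``the Leibniz rule for $\delta$ on products of unitaries of $B$'' does not repair this: Leibniz gives only $\|\delta(b^n)\|\le n\|\delta(b)\|\cdot\|b\|^{n-1}$, the wrong direction, and there is no assumption that $B$ is generated by a group of unitaries on which some stronger bound holds. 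To make your approach go through you would need a comparison that does not degenerate as the number of active coordinates grows, and that is precisely what a direct resolvent (rather than Dirichlet-form) comparison, as in the paper, provides.
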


\begin{proof}
The infinitesimal generator of Ioana's deformation cannot be identified with $\delta$ as the $\alpha_t$'s will converge uniformly on 
the algebra generated by $s(\xi)$ for each $\xi \in \beta$, and $\zeta_\alpha$ will not have this property.  However, it is not hard to check using the fact that
both derivations arise as tensor product derivations that
if $\zeta_\alpha^0$ are the resolvent maps corresponding to the infinitesimal generator of $\alpha_t$ then we have the inequality
$\tau(\zeta_\alpha(a)a^*) \leq 2\tau(\zeta_\alpha^0(a)a^*)$, for all $a \in A$.  
Hence the lemma follows from Lemma 2.1 in \cite{petersonl2} and Corollary \ref{cor:convergence} above.
\end{proof}

\begin{rem}
It can be shown in fact that the deformation coming from the derivation above, 
Ioana's deformation, and the s-malleable deformation from the Gaussian action, are successively weaker deformations.
That is to say that one deformation converging uniformly on a subset of the unit ball implies that the next deformation must also 
converge uniformly.
\end{rem}

When we restrict the bimodule structure on $\KK$ to the subalgebra $L\G$ we see that this is exactly the bimodule
structure coming from the representation $\tilde\pi = \pi \otimes \pi^\SSS$, this give rise to the following lemma:

\begin{lem}\label{lem:repbimodule}
Using the notation above, given $H < \G$ we have the following:

$1$.  ${}_{LH}\KK_{LH}$ embeds into a direct sum of coarse bimodules if and only if $\pi_{|H}$ embeds into a direct sum of left regular representations.

$2$.   ${}_{LH}\KK_{LH}$ weakly embeds into a direct sum of coarse bimodules if and only if $\pi_{|H}$ weakly embeds into a direct sum of left regular representations.

$3$.   ${}_{LH}\KK_{LH}$ has stable spectral gap if and only if $\pi_{|H}$ has stable spectral gap.

$4$.   ${}_{LH}\KK_{LH}$ is a compact correspondence if and only if $\pi_{|H}$ is a $c_0$-representation.

$5$.   ${}_{LH}\KK_{LH}$ is weakly mixing if and only if $\pi_{|H}$ is weakly mixing. 

\end{lem}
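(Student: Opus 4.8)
The strategy is to exploit the explicit identification, noted just before the statement, that restricting the $A \rtimes \G$--$A \rtimes \G$ bimodule $\KK = \tilde\HH \otimes \ell^2\G$ to $L\G$ on both sides recovers precisely the $L\G$--$L\G$ bimodule associated to the unitary representation $\tilde\pi = \pi \otimes \pi^\SSS$. Recall the standard dictionary: to a unitary representation $\rho:\G \to \UU(\LL)$ one associates the $L\G$--$L\G$ bimodule $\LL \otimes \ell^2\G$, and under this correspondence (i) containment in a sum of coarse bimodules corresponds to $\rho$ being contained in a sum of copies of $\lambda$; (ii) weak containment in the coarse bimodule corresponds to weak containment of $\rho$ in $\lambda$; (iii) stable spectral gap of the bimodule corresponds to stable spectral gap of $\rho$; (iv) compactness of the bimodule corresponds to $\rho$ being a $c_0$-representation; and (v) weak mixing of the bimodule corresponds to weak mixing of $\rho$. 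Applying this dictionary to $\rho = \tilde\pi_{|H}$, the five assertions reduce to showing that each of the five properties holds for $\tilde\pi_{|H} = (\pi \otimes \pi^\SSS)_{|H}$ if and only if it holds for $\pi_{|H}$.

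The point, then, is that $\pi^\SSS$ is a benign perturbation that cannot create or destroy any of these properties once one tensors. First I would observe that $\SSS(\HH) = \C\Omega \oplus \bigoplus_{n \geq 1} (\HH \otimes \C)^{\odot n}$ contains the trivial representation as a direct summand (the line $\C\Omega$), so $\pi$ itself is contained in $\pi^\SSS$; hence $\pi \otimes \pi^\SSS$ contains $\pi \otimes 1 \cong \pi$. For the reverse direction one uses that each summand $(\HH \otimes \C)^{\odot n}$ of $\SSS(\HH)$ is a subrepresentation of $(\HH \otimes \C)^{\otimes n}$, which is a tensor power of $\pi$; thus $\pi \otimes \pi^\SSS$ is a subrepresentation of $\bigoplus_{n \geq 0} \pi^{\otimes (n+1)}$. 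Now for (3) (stable spectral gap), (5) (weak mixing), (1)--(2) (weak and strong containment in $\lambda$), one checks the relevant property is inherited by and stable under tensoring: for instance if $\pi_{|H}$ has stable spectral gap then so does $\pi_{|H}^{\otimes k}$ for every $k$, with a uniform bound, and hence so does the direct sum $\bigoplus_k \pi_{|H}^{\otimes k}$ and any subrepresentation thereof; combined with Proposition \ref{prop:ergodic} applied to the Gaussian picture, or with the analogous direct computation, this yields the equivalence. The $c_0$ case (4) is similar: a tensor product of a $c_0$-representation with any representation is $c_0$, and a direct sum of $c_0$-representations with a common modulus of decay is $c_0$; a subrepresentation of a $c_0$-representation is $c_0$; this gives one direction, and containment of $\pi$ in $\pi^\SSS$ gives the other.

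I would organize the write-up by first stating and proving (or citing) the correspondence dictionary relating the five bimodule properties of $\LL \otimes \ell^2\G$ to the five representation-theoretic properties of $\rho$ — much of this parallels Proposition \ref{prop:ergodic} and the discussion of bimodules in the Preliminaries — and then dispatching the five equivalences $\tilde\pi_{|H} \leftrightarrow \pi_{|H}$ in turn using the sandwich $\pi \prec \pi \otimes \pi^\SSS \prec \bigoplus_{n\geq 0}\pi^{\otimes(n+1)}$ together with stability of each property under tensor products, direct sums (with uniform constants where needed), and passage to subrepresentations. The main obstacle I anticipate is bookkeeping the uniformity of constants in (1), (2), and (3): for stable spectral gap one must ensure the spectral-gap constant for $\bigoplus_k \pi^{\otimes k}$ does not blow up, and for containment in a sum of copies of $\lambda$ one needs the Fell-absorption-type fact that $\lambda \otimes \rho \cong \lambda^{\oplus \dim\rho}$ to control the tensor powers; neither is deep, but both require care to state correctly. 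Everything else is a routine translation through the Fock-space construction of Section \ref{sec:gaussian} and the bimodule formalism already set up.
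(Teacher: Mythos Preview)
Your proposal is correct and is exactly the approach the paper intends: the paper does not give a proof of this lemma at all, merely prefacing it with the observation that the $L\G$--$L\G$ bimodule structure on $\KK$ coincides with the one induced by the representation $\tilde\pi = \pi \otimes \pi^\SSS$, and leaving the rest to the reader. Your argument via the representation--bimodule dictionary together with the sandwich $\pi \subset \tilde\pi \subset \bigoplus_{n\geq 0}\pi^{\otimes(n+1)}$ is precisely what is needed. One small point of bookkeeping: the Fock space in question is $\SSS(\HH')$ with $\HH' = \R\Omega_0 \oplus \HH$ rather than $\SSS(\HH)$, but since $\SSS(\HH') \cong \SSS(\R\Omega_0) \otimes \SSS(\HH)$ with $\G$ acting trivially on the first tensor factor, this only introduces a trivial multiplicity and does not affect any of the five equivalences.
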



\section{$L^2$-rigidity and $\UU_{\mathrm {fin}}$-cocycle superrigidity}

In this section we use the tools developed above to prove $\UU_{\mathrm {fin}}$-cocycle superrigidity for the Bernoulli shift action which we view as 
the Gaussian action corresponding to the left-regular representation.  

To prove that a cocycle untwists we use the same general 
setup as Popa in \cite{popasuperrigid}.  In particular, we use the fact that for a weakly-mixing action, in order to show 
that a cocycle untwists it is enough to show that the corresponding s-malleable deformation converges uniformly on 
the ``twisted'' subalgebra of the crossed product algebra.  The main difference in our approach is that to 
show that the s-malleable deformation converges uniformly it is enough by Lemma \ref{lem:deformation} to show that
the deformation coming from the Bernoulli shift derivation converges uniformly.  This allows us to use the techniques developed
in \cite{petersonT}, \cite{petersonl2}, \cite{ozawapopaII}, and \cite{petersonwe} to analyze the 
cocycle on the level of the base space itself rather than the exponential of the space where the properties can be somewhat hidden.

\begin{thm}\label{thm:l2bernoulli}
Let $\G$ be a countable discrete group.  If $L\G$ is $L^2$-rigid then the Bernoulli shift action with diffuse core of $\G$ is $\UU_{\mathrm {fin}}$-cocycle superrigid.
\end{thm}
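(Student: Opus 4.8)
The plan is to run Popa's untwisting scheme, recalled at the start of this section, but to transfer the needed uniform convergence down from the s-malleable deformation of the Gaussian action to the resolvent deformation of the Bernoulli derivation, where the hypothesis on $L\G$ can be used. First I would realize the Bernoulli shift with diffuse core as the Gaussian action $\sigma = \sigma^\pi$ of the left-regular representation $\pi = \lambda$ of $\G$ on $\HH = \ell^2\G$ (this is no loss, since all diffuse standard base spaces are isomorphic), and put $M = A \rtimes_\sigma \G$. We may assume $\G$ is infinite, otherwise the Bernoulli shift is not ergodic and there is nothing to prove; then $\pi = \lambda$, hence $\sigma$, is weakly mixing. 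Given a cocycle $c : \G \times X \to \A$ valued in some $\A \in \UU_{\rm fin}$, I would realize $\A$ as a closed subgroup of $\UU(Q)$ for a $\mathrm{II}_1$ factor $Q$, produce the unitaries $w_\g \in \UU(A \overline\otimes Q)$ associated to $c$, form the twisted crossed product $\tilde M = (A \overline\otimes Q) \rtimes_{\sigma \otimes \id} \G$, and set $\tilde u_\g = w_\g u_\g$; the cocycle identity makes $\g \mapsto \tilde u_\g$ a group homomorphism with $\tau(\tilde u_\g) = 0$ for $\g \neq e$, so $P := \{ \tilde u_\g : \g \in \G \}'' \subset \tilde M$ is a copy of $L\G$. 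By the weak-mixing principle recalled at the start of this section (following \cite{popasuperrigid}), it then suffices to prove that the s-malleable deformation of $\sigma$ constructed in Section \ref{sec:infinitesimalgaussian}, extended to $\tilde M$ by acting trivially on the $Q$-coordinate, converges $\| \cdot \|_2$-uniformly to the identity on $(P)_1$.

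Secondly, I would replace this s-malleable deformation by the resolvent deformation of the Bernoulli derivation. Take the derivation $\delta$ of Section \ref{sec:derivations} attached to the generalized Bernoulli shift with index set $S = \G$, extend it to $\tilde M \cong M \overline\otimes Q$ as the tensor product of $\delta$ with the zero derivation on $Q$ (its resolvent deformation is then $\zeta_\alpha \otimes \id$), and invoke Lemma \ref{lem:deformation}, with its finite von Neumann algebra taken to be $Q$ and its subalgebra taken to be $P$, together with the remark following it: uniform convergence of $\zeta_\alpha \otimes \id$ to the identity on $(P)_1$ forces uniform convergence of Ioana's deformation on $(P)_1$, which in turn forces uniform convergence of the s-malleable deformation of $\sigma$ on $(P)_1$. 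Hence it is enough to show that $\zeta_\alpha \otimes \id$ converges uniformly to the identity on $(P)_1$.

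Thirdly, I would extract this from the $L^2$-rigidity of $L\G \cong P$, applied to the inclusion $P \subset \tilde M$ and the derivation $\delta$, which takes values in the $\tilde M$-$\tilde M$ correspondence $\HH = \KK \overline\otimes L^2(Q)$, with $\KK$ as in Section \ref{sec:derivations}. By Definition \ref{defn:l2rigid} it suffices to check that $\HH$, viewed as a $P$-$P$ correspondence, embeds into $(L^2 P \overline\otimes L^2 P)^{\oplus \infty}$. Since $\pi = \lambda$ embeds (trivially) into a direct sum of copies of the left-regular representation, Lemma \ref{lem:repbimodule} gives that, over the untwisted copy of $L\G \subset M$, the correspondence ${}_{L\G} \KK _{L\G}$ embeds into a direct sum of coarse $L\G$-$L\G$ correspondences. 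I would then upgrade this to the twisted copy $P$ by a direct examination of how $\tilde u_\g = w_\g u_\g$ acts on $\HH = \tilde\HH \otimes \ell^2\G \otimes L^2(Q)$: the $\ell^2\G$-tensorand still carries the regular representation of $\G$ on both sides and the $\tilde\HH$-tensorand retains a copy of the left-regular representation (precisely because $\pi = \lambda$), while $w_\g$ acts only on the $\tilde\HH \otimes L^2(Q)$ tensorands, so a Fell-absorption argument forces ${}_P \HH _P$ to embed into a direct sum of coarse $P$-$P$ correspondences. Granting this, $L^2$-rigidity yields that $\zeta_\alpha \otimes \id$ converges uniformly to the identity on $(P)_1$; unwinding the chain of implications, the s-malleable deformation of $\sigma$ converges uniformly on $(P)_1$, so $c$ untwists, and since $c$ was an arbitrary $\UU_{\rm fin}$-valued cocycle, $\sigma$ is $\UU_{\rm fin}$-cocycle superrigid.

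The step I expect to be the main obstacle is this last correspondence computation: verifying that after twisting the regular unitaries by the $\UU(Q)$-valued cocycle and tensoring on $L^2(Q)$, the $P$-$P$ correspondence $\KK \overline\otimes L^2(Q)$ still embeds into a direct sum of coarse $P$-$P$ correspondences. This is exactly where the assumption that $\pi$ is the left-regular representation (equivalently, that the action is a plain Bernoulli shift rather than a general Gaussian action) is essential, and also where it matters that Definition \ref{defn:l2rigid} requires coarseness only as an $N$-$N$ bimodule: over $\tilde M$ the correspondence $\HH$ is not coarse. A secondary, more routine point is to confirm that all three deformations and the derivation $\delta$ really do extend to $\tilde M = (A \overline\otimes Q) \rtimes \G$ and that Lemma \ref{lem:deformation} together with the successively-weaker remark apply there without change; this follows from the tensor-product structure of the derivations involved.
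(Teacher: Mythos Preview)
Your proposal is correct and follows essentially the same line as the paper's proof: realize the Bernoulli shift as the Gaussian action of $\lambda$, form the twisted copy $P \cong L\G$ inside $(A \overline\otimes Q)\rtimes\G$, use Fell absorption (exploiting the $\ell^2\G$-tensorand of $\tilde\HH$) to show that the relevant correspondence over $P$ embeds into a direct sum of coarse bimodules, apply $L^2$-rigidity to get uniform convergence of $\zeta_\alpha$ on $(P)_1$, pass via Lemma~\ref{lem:deformation} (and the remark following it) to the s-malleable deformation, and conclude with Popa's untwisting theorem. The paper carries out exactly these steps, and your identification of the Fell-absorption verification as the crux is accurate; the only cosmetic differences are notational ($Q$ versus $M$, $P$ versus $\widetilde{L\G}$).
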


\begin{proof}
Let $\GG \in \UU_{\mathrm {fin}}$, then $\GG \subset \UU(M)$ as a closed subgroup where $M$ is a finite separable von Neumann algebra.
Let $c:\G \times X \rightarrow \GG$ be a cocycle where $X$ is the probability space of the Gaussian action.
Consider $A = L^\infty(X)$, and $\omega: \G \rightarrow \UU(A \overline \otimes M)$ 
given by $\omega_\g(x) = c(\g, \g^{-1}x)$ the corresponding unitary cocycle for the action $\tilde\sigma_\g = \sigma_\g \otimes {\rm id}$.
Note that $\omega_{\g_1\g_2} = \omega_{\g_1} \tilde\sigma_{\g_1}(\omega_{\g_2})$, for all $\g_1, \g_2 \in \G$.
Here we view a unitary element in $A \overline \otimes M$ as map from $X$ to $\UU(M)$ (see \cite{popasuperrigid} for a detailed explanation).

As noted above, the Bernoulli shift action with diffuse core is precisely the Gaussian action corresponding to the left-regular representation; 
hence, by Lemma \ref{lem:repbimodule} we have that as an $L\G$-$L\G$ Hilbert bimodule $\KK$ embeds into a direct sum of coarse correspondences.
If we denote by $\widetilde{L\G}$ the von Neumann algebra generated by $\{ \tilde u_\g \} = \{ \omega_\g u_\g \}$ then the bimodule structure
of $\widetilde{L\G}$ ($\cong L\G$) on $\KK$ is the same as the bimodule structure of $L\G$ on the correspondence coming from the representation
$\g \mapsto {\rm Ad}(\omega_\g) \circ \tilde\pi_\g$ on $\tilde\HH \otimes L^2M$.  
The $A \overline \otimes M$ bimodule structure on $\tilde\HH \otimes L^2M = \HH \otimes \SSS(\HH') \otimes L^2M$ decomposes
as a direct sum of bimodules $\HH \otimes \SSS(\HH') \otimes L^2M = \oplus_{\xi \in \beta} \SSS(\HH') \otimes L^2M$ where the bimodule structure
on each copy of $\SSS(\HH') \otimes L^2M$ is given by Equation (\ref{eq:bimodulestructure}), and under this decomposition
we have ${\rm Ad}(\omega_\g)  \circ \tilde\pi_\g = \pi_\g \otimes ({\rm Ad}(\omega_\g) \circ \pi^\SSS_\g )$.
Therefore by Fell's absorption principle this representation is an infinite direct sum
of left-regular representations; hence, we have that $\KK$ also embeds into a direct sum of coarse correspondences when $\KK$ is viewed as an $\widetilde{L\G}$-$\widetilde{L\G}$ Hilbert bimodule.

Since $L\G$ is $L^2$-rigid we have that the corresponding deformation $\zeta_\alpha$ converges uniformly to the identity map 
on $(\widetilde{L\G})_1$, by Lemma \ref{lem:deformation} we have that a corresponding s-malleable deformation 
also converges uniformly to the identity on $(\widetilde{L\G})_1$.  Thus, by Theorem 3.2 in \cite{popasuperrigid} the cocycle $\omega$ is cohomologous to a homomorphism.
\end{proof}

We end this paper with some examples of groups for which the hypothesis of the Theorem \ref{thm:l2bernoulli} is satisfied.

It follows from \cite{petersonl2} that if $N$ is a nonamenable II$_1$ factor which is non-prime, has property ($\Gamma$), or is $w$-rigid,
then $N$ is $L^2$-rigid.  We include here another class of $L^2$-rigid finite von Neumann algebras, this class
includes the group von Neumann algebras of 
all generalized wreath product groups $A_0 \wr_X \G_0$ where $A_0$ is an infinite abelian group and $\G_0$ does not have the Haagerup property, or $\G_0$ is a non-amenable direct products of infinite groups.  
This is a special case of a more general result which can be found in \cite{petersonwe}.

\begin{thm}
Let $\G$ be a countable discrete group which contains an infinite normal abelian subgroup and either does not have the Haagerup property or contains an infinite subgroup $\G_0$ such that $L\G_0$ is $L^2$-rigid,
then $L\G$ is $L^2$-rigid.
\end{thm}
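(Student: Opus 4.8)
The plan is to produce a closable real derivation on $\widetilde{M} = L\Gamma$ (or rather on a larger algebra containing it) whose associated Hilbert bimodule, viewed over a suitable diffuse subalgebra, is coarse, and whose deformation does not converge uniformly on the unit ball; this will witness the failure of $L^2$-rigidity being \emph{circumvented} — that is, we want the opposite: we must show every such derivation on any inclusion $L\Gamma \subset M$ has its resolvent deformation $\zeta_\alpha$ converging uniformly to the identity on $(L\Gamma)_1$. Write $\Gamma$ with an infinite normal abelian subgroup $\Sigma \lhd \Gamma$. The key structural fact is that $L\Sigma$ is a diffuse abelian (hence regular, by normality) subalgebra of $L\Gamma$, and conjugation by $\Gamma$ gives a measure-preserving action on $L\Sigma$; so $L\Gamma$ has the form $L\Sigma \rtimes (\Gamma/\Sigma)$, or more usefully contains a copy of a generalized Bernoulli-type situation through $\Sigma$.

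First I would fix an inclusion $(L\Gamma,\tau)\subset(M,\tilde\tau)$ and a closable real derivation $\delta:M\to\HH$ with $\HH$, as an $L\Gamma$-$L\Gamma$ correspondence, embedding into $(L^2L\Gamma\,\overline\otimes\,L^2L\Gamma)^{\oplus\infty}$. The restriction $\delta|_{L\Sigma}$ is then a closable real derivation on the diffuse abelian algebra $L\Sigma$, and the relevant bimodule restricted to $L\Sigma$-$L\Sigma$ still embeds in a sum of coarse $L\Sigma$-bimodules. Over an abelian von Neumann algebra there is essentially no obstruction: the cocycle-type argument of Popa–Sinclair-style, or more directly the computation that on a diffuse abelian algebra a closable real derivation into a coarse bimodule corresponds (via the $(\xi|\eta)$ pairing and the operators $\widehat\phi_\xi$) to a measurable field of Dirichlet forms, lets us conclude that $\zeta_\alpha$ restricted to $L\Sigma$ converges uniformly. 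The point is that $L^2$-rigidity is automatic for abelian algebras. The substance of the argument is then to \emph{propagate} this convergence from $L\Sigma$ to all of $L\Gamma$ using the two hypotheses.

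The propagation step is where the two alternative hypotheses enter, and this is the main obstacle. In the case where $\Gamma$ does not have the Haagerup property: since $\Sigma$ is infinite abelian and normal, the deformation $\zeta_\alpha$ (which converges uniformly on $(L\Sigma)_1$) together with the commuting action of $\Gamma$ by conjugation produces, if $\zeta_\alpha$ failed to converge uniformly on all of $(L\Gamma)_1$, a $\Gamma$-invariant "relative weak mixing / malleability" configuration. The failure of uniform convergence on $(L\Gamma)_1$ combined with uniform convergence on the regular subalgebra $(L\Sigma)_1$ is exactly the setup where one applies a spectral-gap / convergence-transfer argument (as in \cite{petersonl2}, using that the bimodule embeds into coarse): one shows the ``bad'' part of $L\Gamma$ must, after a compactness argument using finiteness, give rise to a sequence of almost-invariant vectors for $\Gamma$ exhibiting the Haagerup property, a contradiction. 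In the case where $\Gamma$ contains an infinite $\Gamma_0$ with $L\Gamma_0$ $L^2$-rigid: here one uses $L^2$-rigidity of $L\Gamma_0$ directly to get uniform convergence of $\zeta_\alpha$ on $(L\Gamma_0)_1$, then combines the two regular-or-almost-regular subalgebras $L\Sigma$ and $L\Gamma_0$ — which together generate a subalgebra with spectral gap in $L\Gamma$ — and again transfers convergence to all of $(L\Gamma)_1$ by a spectral-gap argument.

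In both cases the technical heart is the transfer lemma: if a deformation $\zeta_\alpha$ of unital symmetric c.p. maps on $L\Gamma$ converges uniformly on the unit ball of a subalgebra $Q$ with the property that $Q'\cap L\Gamma$ is small (e.g. $Q$ contains a regular diffuse subalgebra, or $Q$ has spectral gap inside $L\Gamma$) and the associated bimodule is coarse, then $\zeta_\alpha$ converges uniformly on all of $(L\Gamma)_1$. This is implicit in the machinery of \cite{petersonl2} and \cite{petersonwe}; I expect the bulk of the write-up to consist in checking that the hypotheses ``infinite normal abelian subgroup'' and ``non-Haagerup or contains $L^2$-rigid infinite subgroup'' give exactly the regularity/spectral-gap input that this transfer lemma requires, and then quoting the general result from \cite{petersonwe} of which this is stated to be a special case. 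I would therefore structure the proof as: (1) reduce to studying $\delta|_{L\Sigma}$ and observe $L^2$-rigidity is automatic there; (2) invoke normality of $\Sigma$ to get that $\{u_\sigma : \sigma\in\Sigma\}''$ is regular in $L\Gamma$; (3) in case (a) run the Haagerup-property obstruction, in case (b) add the $L^2$-rigid subalgebra $L\Gamma_0$ and use spectral gap; (4) conclude uniform convergence on $(L\Gamma)_1$, i.e. $L\Gamma$ is $L^2$-rigid; and finally remark that this is subsumed by the general theorem of \cite{petersonwe}.
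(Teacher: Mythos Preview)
Your proposal contains a fundamental error: the claim that ``$L^2$-rigidity is automatic for abelian algebras'' is false. For instance, $L\Z$ is not $L^2$-rigid: there are unbounded $1$-cocycles for $\Z$ into $\lambda_\Z^{\oplus\infty}$, and the associated closable real derivation on $L\Z$ lands in a direct sum of coarse bimodules while its resolvent deformation fails to converge uniformly on $(L\Z)_1$. More to the point, if your claim were correct then the non-Haagerup / $L^2$-rigid-subgroup hypothesis would be superfluous---normality of $\Sigma$ alone would give the result---and the paper explicitly notes that for $\Z \wr \F_2$ (which has an infinite normal abelian subgroup but satisfies neither extra hypothesis) the question is open. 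So step (1) of your outline does not go through, and the rest of the argument, which is built on propagating from $(L\Sigma)_1$, collapses.

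The paper's proof runs in essentially the \emph{opposite} direction. The extra hypothesis is what supplies the initial convergence: if $\eta_\alpha$ failed to converge uniformly on every infinite subset of $\G$, one could splice the deformations into a proper conditionally negative-definite function on $\G$, forcing the Haagerup property; so non-Haagerup yields an infinite $X\subset\G$ on which $\eta_\alpha$ converges uniformly (and in the other case one simply takes $X=\G_0$). The normal abelian subgroup $A$ is then used as a \emph{transfer mechanism}, not as a source of rigidity: either some $a\in A$ has infinite $X$-conjugacy class $a^X\subset A$ (so convergence holds on an infinite subset of $A$), or every $a^X$ is finite, in which case a diagonal argument produces a sequence in $X^{-1}X$ asymptotically commuting with $A$. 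In both cases the results of \cite{petersonl2} upgrade this to uniform convergence on $(LA)_1$, and only then does normality of $A$ push convergence to $(L\G)_1$. Your outline has the roles of $\Sigma$ and the auxiliary hypothesis exactly reversed.
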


\begin{proof}
We will use the same notation as in \cite{petersonl2}.
Suppose $(M, \tau)$ is a finite von Neumann algebra with $L\G \subset M$, and $\delta: M \rightarrow L^2M \overline \otimes L^2M$ 
is a densely defined closable real derivation.  

Since the maps $\eta_\alpha$ converge point-wise to the identity we may take an appropriate sequence $\alpha_n$ such that the 
map $\phi: \G \rightarrow \R$ given by $\phi(\gamma) = \Sigma_n 1 - \tau( \eta_{\alpha_n}(u_\gamma) u_\gamma^*)$ is well defined.
If the deformation $\eta_\alpha$ does not converge uniformly on any infinite subset of $\G$ then the map $\phi$ is not bounded 
on any infinite subset and hence defines a proper, conditionally negative-definite function on $\G$ showing that $\G$ has the
Haagerup property.

Therefore if $\G$ does not have the Haagerup property then there must exist an infinite set $X \subset \G$ on which the deformation $\eta_\alpha$ converges uniformly.  Similarly, if $\G_0 \subset \G$ is an infinite subgroup such that $L\G_0$ is $L^2$-rigid then we have that
the deformation $\eta_\alpha$ converges uniformly on the infinite set $X = \G_0$.

Let $A \subset \G$ be an infinite normal abelian subgroup.  If there exists an $a \in A$ such that
$a^X = \{ xax^{-1} | x \in X \}$ is infinite, then we have that the deformation $\eta_\alpha$ converges uniformly 
on this set, and by applying the results in \cite{petersonl2} it follows that $\eta_\alpha$ converges uniformly 
on $A \subset L(A)$.  Since $A$ is a subgroup in $\UU(LA)$ which generates $LA$ it then follows 
that $\eta_\alpha$ converges uniformly on $(LA)_1$
and hence also on $(L\G)_1$ since $A$ is normal in $\G$.

If $a \in A$ and $a^X$ is finite then there exists an infinite sequence $\gamma_n \in X^{-1}X$ such that
$[\gamma_n, x] = e$, for each $n$.  Thus if $a^X$ is finite for each $a \in A$ then by taking a diagonal subsequence
we construct a new sequence $\gamma_n \in X^{-1}X$ such that $\lim_{n \rightarrow \infty} [\gamma_n, a ] = e$.
Since $\eta_\alpha$ also converges uniformly on $X^{-1}X$ we may again apply the results in \cite{petersonl2}
to conclude that $\eta_\alpha$ converges uniformly on $A$ and hence on $(L\G)_1$ as above.
\end{proof}

It has been pointed out to us by Adrian Ioana that in light of Corollary 1.3 in \cite{chifanioanaII} the above argument is sufficient to show that for a lattice $\G$ in a connected Lie group which does not have the Haagerup property, we must have that $L\G$ is $L^2$-rigid.

We also show that $L^2$-rigidity is stable under orbit equivalence.  The proof of this uses the diagonal embedding argument of Popa and Vaes \cite{popavaesII}.

\begin{thm}\label{thm:l2rigidityoe}
Let $\G_i \curvearrowright (X_i, \mu_i)$ be free ergodic measure preserving actions for $i = 1,2$.  If the two actions are orbit equivalent and $L\G_1$ is $L^2$-rigid then
$L\G_2$ is also $L^2$-rigid.
\end{thm}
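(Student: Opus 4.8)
The strategy is to route $L^2$-rigidity through the Cartan pair supplied by the orbit equivalence. First I would invoke the standard correspondence between orbit equivalence and Cartan subalgebras: an orbit equivalence between $\G_1 \car (X_1,\mu_1)$ and $\G_2 \car (X_2,\mu_2)$ produces a trace-preserving isomorphism $L^\infty(X_1) \rtimes \G_1 \cong L^\infty(X_2) \rtimes \G_2$ carrying $L^\infty(X_1)$ onto $L^\infty(X_2)$. Identifying the two sides, write $N$ for the resulting $\mathrm{II}_1$ factor and $A$ for the common Cartan subalgebra, so that $L\G_1, L\G_2 \subset N$ and $N = A \rtimes \G_i$ for each $i$.

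I would then isolate an intrinsic property of the pair $(N,A)$ -- call it \emph{$L^2$-rigidity of $N$ relative to $A$} -- namely: for every inclusion $(N,\tau) \subset (\widehat M, \widehat\tau)$ and every closable real derivation $\delta : \widehat M \to \HH$ with $\delta_{|A} = 0$ and such that $\HH$, viewed as an $N$-$N$ correspondence, embeds into $(L^2 N \overline\otimes_A L^2 N)^{\oplus \infty}$, the associated deformation $\zeta_\alpha$ converges uniformly to the identity on $(N)_1$. Since this property is formulated purely in terms of $N$ and $A$, it is an invariant of the Cartan pair, hence of the orbit equivalence relation. The theorem therefore reduces to two implications, each for an arbitrary group measure space decomposition $N = A \rtimes \G$: (a) if $L\G$ is $L^2$-rigid, then $(N,A)$ is relatively $L^2$-rigid; (b) if $(N,A)$ is relatively $L^2$-rigid, then $L\G$ is $L^2$-rigid. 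Granting these, $L^2$-rigidity of $L\G_1$ gives relative $L^2$-rigidity of $(N,A)$ by (a), which gives $L^2$-rigidity of $L\G_2$ by (b).

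For (a) I would restrict the ambient derivation: given $(N \subset \widehat M, \delta, \HH)$ as above, the inclusion $L\G \subset N \subset \widehat M$ together with a direct computation using freeness of the action -- identifying $(L^2 N \overline\otimes_A L^2 N)_{|L\G}$ with an infinite amplification of the coarse $L\G$-$L\G$ bimodule -- shows that $(L\G \subset \widehat M, \delta, \HH)$ satisfies the hypotheses in the definition of $L^2$-rigidity of $L\G$, so $\zeta_\alpha \to \mathrm{id}$ uniformly on $(L\G)_1$. One then uses that $\delta_{|A} = 0$, which makes $\zeta_\alpha$ an $A$-bimodular deformation, together with the fact that $A \subset N$ is a Cartan subalgebra, to propagate this uniform convergence to $(N)_1$. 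For (b), given an inclusion $L\G \subset M$ and a closable real derivation $\delta : M \to \HH$ with $\HH$ embedding into an infinite amplification of the coarse $L\G$-$L\G$ bimodule, I would pass to the diagonal embedding of Popa and Vaes: inside $M \overline\otimes N$ one has the copy $\Delta(N)$ generated by $1 \otimes A$ and $\{u_\g \otimes u_\g\}_{\g \in \G}$, the tensor product derivation $\widehat\delta = \delta \otimes 0$ vanishes on $\Delta(A) = 1 \otimes A$, its coefficient bimodule is -- by the diagonal embedding lemma -- carried into $(L^2 N \overline\otimes_A L^2 N)^{\oplus \infty}$ over $N \cong \Delta(N)$, and $\widehat\delta$ restricts to $\delta$ on the common $L\G$. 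Relative $L^2$-rigidity of $(N,A)$ then yields uniform convergence of $\widehat\zeta_\alpha$ on $(N)_1 \supset (L\G)_1$, from which one reads off uniform convergence of $\zeta_\alpha$ on $(L\G)_1$.

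The step I expect to be the main obstacle is the passage back from the amplified deformation to the original one in (b): the diagonal embedding naturally transports only the ``diagonal part'' of $\zeta_\alpha$ along the unitaries $u_\g$, so one must argue -- using the derivation/Dirichlet-form structure of $\zeta_\alpha$ rather than mere complete positivity, and the comparison of diagonal forms in the spirit of Lemma 2.1 of \cite{petersonl2} -- that this suffices to force uniform convergence of $\zeta_\alpha$ on all of $(L\G)_1$. A secondary technical point is the verification of the diagonal embedding lemma in the generality needed here, i.e.\ that the $\Delta$-twisted amplification of any $L\G$-$L\G$ bimodule (weakly) contained in the coarse one is (weakly) contained in $(L^2 N \overline\otimes_A L^2 N)^{\oplus \infty}$ as an $N$-$N$ bimodule; this is where the specific form of the diagonal embedding, together with Fell's absorption principle, enters.
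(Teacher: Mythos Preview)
Your approach is essentially the paper's, repackaged through an intermediate ``relative $L^2$-rigidity'' of the Cartan pair $(N,A)$. The paper carries out your (a) and (b) in a single pass: it diagonally embeds $N$ into $N\overline\otimes M$ via $\G_2$, identifies $L^2N\overline\otimes\HH$ as an $\alpha(N)$-bimodule inside copies of $L^2\langle N,A\rangle$ (your ``secondary technical point'', handled there by computing the completely positive maps attached to bounded vectors $1\otimes\xi$), restricts to $\alpha(L\G_1)$ where $L^2\langle N,A\rangle$ is a multiple of the coarse bimodule, applies $L^2$-rigidity of $L\G_1$, propagates to $(\alpha(N))_1$ using that the deformation is the identity on $\alpha(A)$, and then reads off the conclusion on $\{u_\g:\g\in\G_2\}$. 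Your two-step formulation is a pleasant conceptual reorganization of the same argument.

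Your anticipated ``main obstacle'' is not one. Since $\widehat\zeta_\alpha=\zeta_\alpha\otimes\id$ and $\Delta(u_\g)=u_\g\otimes u_\g$, one has the exact equality
\[
\|\widehat\zeta_\alpha(\Delta(u_\g))-\Delta(u_\g)\|_2=\|(\zeta_\alpha(u_\g)-u_\g)\otimes u_\g\|_2=\|\zeta_\alpha(u_\g)-u_\g\|_2,
\]
so uniform convergence of $\widehat\zeta_\alpha$ on $\{\Delta(u_\g)\}$ \emph{is} uniform convergence of $\zeta_\alpha$ on $\{u_\g\}$; the standard averaging argument (Theorem~4.1.7 in \cite{popacorrespondence}) then gives $(L\G)_1$. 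No Dirichlet-form comparison in the spirit of Lemma~2.1 of \cite{petersonl2} is needed here. The step that does require a word is the propagation in your (a) from $(L\G)_1$ to $(N)_1$: since $\delta_{|A}=0$ forces $\zeta_\alpha$ to be $A$-bimodular, one has $\|\zeta_\alpha(au_h)-au_h\|_2=\|\zeta_\alpha(u_h)-u_h\|_2$ for every $a\in\UU(A)$ and $h\in\G$, so $\zeta_\alpha$ converges uniformly on the generating group $\UU(A)\cdot\{u_h\}_{h\in\G}$ and the same averaging argument finishes. This is exactly what the paper compresses into its parenthetical remark that $\id\otimes\eta_\alpha$ is the identity on $\alpha(A)$.
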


\begin{proof}
Suppose $L\G_2 \subset M$ and $\delta: M \rightarrow \HH$ is a closable real derivation such that $\HH$ as an $L\G_2$ bimodule 
embeds into a direct sum of coarse bimodules.  Let $N = L^\infty(X_1, \mu_1) \rtimes L\G_1 = L^\infty(X_2, \mu_2) \rtimes L\G_2$ and consider the
$N \overline \otimes M$ bimodule $\tilde \HH = L^2N \overline \otimes \HH$.
If we embed $N$ into $N \overline \otimes M$ by the linear map 
$\alpha$ which satisfies $\alpha( a u_\g) = a u_\g \otimes u_\g$ for all $a \in L^\infty(X_2, \mu_2)$, and $\g \in \G_2$, then
when we consider the $\alpha(N)$-$\alpha(N)$ bimodule $\tilde \HH$ we see that this bimodule is contained in a direct sum of the bimodule $L^2\langle \alpha(N), \alpha(L^\infty(X_1, \mu_1)) \rangle$ coming from the basic construction of $(\alpha(L^\infty(X, \mu)) \subset \alpha(N) )$.  Indeed, this follows because the completely positive maps corresponding to left and right bounded vectors of the form $1 \otimes \xi \in L^2N \overline \otimes \HH$ are easily seen to live in $L^2\langle \alpha(N), \alpha(L^\infty(X_1, \mu_1)) \rangle$.

The $\alpha(N)$-$\alpha(N)$ bimodule $L^2\langle \alpha(N), \alpha(L^\infty(X_1, \mu_1)) \rangle$ is an orbit equivalence invariant and is canonically isomorphic to the bimodule coming from the left regular representation of $\G_1$ (see for example Section 1.1.4 in \cite{popabetti}).
It therefore follows that $\tilde \HH$ when viewed as an $\alpha( L\G_1)$ bimodule embeds into a direct sum of coarse bimodules.

We consider the closable derivation $0 \otimes \delta: N \overline \otimes M \rightarrow \tilde \HH$ as defined in Section \ref{sec:tensorproduct} 
and use the fact that $L\G_1$ is $L^2$-rigid to conclude that the corresponding deformation ${\rm id} \otimes \eta_\alpha$ converges uniformly
on the unit ball of $\alpha(N)$, (note that ${\rm id} \otimes \eta_\alpha$ is the identity on $\alpha( L^\infty(X_1, \mu_1) ) = \alpha (L^\infty(X_2, \mu_2) )$).
In particular, ${\rm id} \otimes \eta_\alpha$ converges uniformly on $\{ \alpha(u_\g) \ | \ \g \in \G_2 \}$ which shows that
$\eta_\alpha$ converges uniformly on $\{ u_\g \ | \ \g \in \G_2 \}$.  As this is a group which generates $L\G_2$ we may then use a standard averaging argument to conclude that $\eta_\alpha$ converges uniformly on the unit ball of $L\G_2$, (see for example Theorem 4.1.7  in \cite{popacorrespondence}).
\end{proof}

\begin{rem}
The above argument will also work to show that the ``$L^2$-Haagerup property'' (see \cite{petersonl2}) is preserved by orbit equivalence.  In particular,  this gives a new way to show that the von Neumann algebras of groups which are orbit equivalent to free groups are solid in the sense of Ozawa \cite{ozawasolid}.  Solidity of group von Neumann algebras for groups which are orbit equivalent to free groups was first shown by Sako in \cite{sako}.  

We also note that by \cite{cowlingzimmer} any group which is orbit equivalent to a free group will have the complete metric approximation property.  It will no doubt follow by using the techniques in \cite{ozawapopaII} that the von Neumann algebra of a group which is orbit equivalent to a free group will be strongly solid.

Examples of groups which are orbit equivalent to a free group can be found in \cite{gaboriaufree}, and \cite{bridsontweedalewilton}.
\end{rem}



\begin{thebibliography}{10}

\bibitem{bekka} B. Bekka: \textit{Amenable unitary representations of locally compact groups},
Invent. Math. \textbf{100} (1990), no. 2, 383--401. 

\bibitem{bdhv} B. Bekka, P. de la Harpe, A. Valette: \textit{Kazhdan's Property (T)}.  New Math. Monographs, no. \textbf{11}.  CUP, 2008.

\bibitem{bekkavalette} M. E. B. Bekka, A. Valette: \textit{Group cohomology, harmonic functions and the first $L\sp 2$-Betti number}.  
Potential Anal.  6  (1997),  no. \textbf{4}, 313-326.

\bibitem{bridsontweedalewilton} M. R. Bridson, M. Tweedale, H. Wilton: \textit{Limit groups, positive-genus towers and measure-equivalence.}  Ergodic Theory Dynam. Systems  {\bf 27}  (2007),  no. 3, 703--712.

\bibitem{cheegergromov} J. Cheeger, M. Gromov: \textit{$L_2$-cohomology and group cohomology},
Topology  \textbf{25}  (1986),  no. 2, 189-215. 

\bibitem{chifanioana} I. Chifan, A. Ioana: \textit{Ergodic subequivalence relations induced by a Bernoulli action },
Preprint 2008, math.OA/0802.2353.

\bibitem{chifanioanaII} I. Chifan, A. Ioana: \textit{On relative property (T) and Haagerup's property},
Preprint 2009, math.GR/0906.5363.

\bibitem{connescorrespondence} A. Connes: \textit{Noncommutative Geometry}. Academic Press, 1994.

\bibitem{connesfeldmanweiss} A. Connes, J. Feldman, B. Weiss:  \textit{An amenable equivalence relation is generated by a single transformation},
Ergodic Theory Dynamical Systems  \textbf{1}  (1981), no. 4, 431-450. 

\bibitem{connesweiss} A. Connes, B. Weiss: \textit{Property (T) and asymptotically invariant sequences}, Israel J. Math.  37 (1980), no. \textbf{3}, 209--10.

\bibitem{cowlingzimmer} M. Cowling, R. J. Zimmer: \textit{Actions of lattices in ${\rm Sp}(1,n)$}.
Ergodic Theory Dynam. Systems {\bf 9} (1989), no. 2, 221--237.

\bibitem{davieslindsay} E.B. Davies, J.M. Lindsay: \textit{Non-commutative symmetric Markov semigroups}, Math. Z {\bf 210}, 379-411 (1992).

\bibitem{dyeI} H. A. Dye: \textit{On groups of measure preserving transformation. I.}  Amer. J. Math.  {\bf 81}  (1959) 119--159.

\bibitem{dye} H. A. Dye: \textit{On groups of measure preserving transformations. II},
Amer. J. Math.  \textbf{85}  (1963) 551--576.

\bibitem{ioanawreath} A. Ioana: \textit{Rigidity results for wreath product ${\rm II}\sb 1$ factors}.  
J. Funct. Anal.  252  (2007),  no. \textbf{2}, 763--791.

\bibitem{ipp} A. Ioana, J. Peterson, S. Popa: \textit{Amalgamated free products of weakly rigid factors and calculation of their symmetry groups}.  
Acta Math.  200  (2008),  no. \textbf{1}, 85--153.

\bibitem{furmancsr} A. Furman: \textit{On Popa's cocycle superrigidity theorem}.  Int. Math. Res. Not.  2007.

\bibitem{gaboriaufree} D. Gaboriau: \textit{Examples of groups that are measure equivalent to the free group},  
Ergodic Theory Dynam. Systems  {\bf 25}  (2005),  no. 6, 1809--1827.

\bibitem{guichardet} A. Guichardet: \textit{Symmetric Hilbert Spaces and Related Topics}. Lecture Notes in Math., {\bf 261}, 1972.

\bibitem{mvn4} F. J. Murray, J. von Neumann: \textit{On rings of operators. IV},
Ann. of Math. (2)  \textbf{44},  (1943) 716-808.

\bibitem{ornsteinweiss} D. S. Ornstein, B. Weiss:  \textit{Ergodic theory of amenable group actions. I. The Rohlin lemma},
 Bull. Amer. Math. Soc. (N.S.)  \textbf{2}  (1980), no. 1, 161-164.

\bibitem{ozawasolid} N. Ozawa \textit{Solid von Neumann algebras}  
Acta Math.  {\bf 192}  (2004),  no. 1, 111--117.

\bibitem{ozawapopaI} N. Ozawa, S. Popa: \textit{On a class of II$_1$ factors with at most one Cartan subalgebra I},
Preprint 2008, math.OA/0706.3623.  To appear in Annals of Mathematics.

\bibitem{ozawapopaII} N. Ozawa, S. Popa: \textit{On a class of II$_1$ factors with at most one Cartan subalgebra II},
Preprint 2008, math.OA/0807.4270.  To appear in Amer. Journal of Math.

\bibitem{parthasarathyschmidt} K.R. Parthasarathy, K. Schmidt:  \textit{Infinitely divisible projective representations, cocycles and Levy-Khinchine formula on locally compact groups}, Preprint (1970).

\bibitem{petersonT} J. Peterson: \textit{A $1$-cohomology characterization of property (T) in von Neumann algebras},
Pacific J. of Math. {\bf 243}, no. 1, 181-199, (2009).

\bibitem{petersonl2} J. Peterson: \textit{$L^2$-rigidity in von Neumann algebras},
Invent. Math. {\bf 175}, 417-433 (2009).

\bibitem{petersonwe} J. Peterson: \textit{Examples of group actions which are virtually $W^*E$-superrigid},
Preprint 2009.

\bibitem{petersonthom} J. Peterson, A. Thom: \textit{Group cocycles and the ring of affiliated operators},
Preprint 2007, math.OA/0708.4327.

\bibitem{popacorrespondence} S. Popa: \textit{Correspondences},
INCREST Preprint (1986).  unpublished.

\bibitem{popabernoulli} S. Popa: \textit{Some rigidity results for non-commutative Bernoulli shifts}.  
J. Funct. Anal.  230  (2006),  no. \textbf{2}, 273--328.

\bibitem{popabetti} S. Popa: \textit{On a class of type ${\rm II}\sb 1$ factors with Betti numbers invariants},
 Ann. of Math. (2)  \textbf{163}  (2006),  no. 3, 809--899. 

\bibitem{popacohomology} S. Popa: \textit{Some computations of 1-cohomology groups and construction of non-orbit-equivalent actions}.  
J. Inst. Math. Jussieu  5  (2006),  no. \textbf{2}, 309--332. 

\bibitem{popamalleableI} S. Popa: \textit{Strong rigidity of $\rm II\sb 1$ factors arising from malleable actions of $w$-rigid groups. I}.  
Invent. Math.  165  (2006),  no. \textbf{2}, 369--408.

\bibitem{popamalleableII} S. Popa: \textit{Strong rigidity of $\rm II\sb 1$ factors arising from malleable actions of $w$-rigid groups. II}.  
Invent. Math.  165  (2006),  no. \textit{2}, 409--451.

\bibitem{popasuperrigid} S. Popa: \textit{Cocycle and orbit equivalence superrigidity for malleable actions of $w$-rigid groups}.  
Invent. Math.  170  (2007),  no. \textbf{2}, 243--295.

\bibitem{popadefrigid} S. Popa: \textit{Deformation and rigidity for group actions and von Neumann algebras}.  
International Congress of Mathematicians. Vol. I,  445--477, Eur. Math. Soc., ZŸrich, 2007. 

\bibitem{popaspectralgap} S. Popa: \textit{On the superrigidity of malleable actions with spectral gap}.  
J. Amer. Math. Soc.  21  (2008),  no. \textbf{4}, 981--1000.

\bibitem{popasasyk} S. Popa, R. Sasyk: \textit{On the cohomology of Bernoulli actions}.  
Ergodic Theory Dynam. Systems  27  (2007),  no. \textbf{1}, 241--251.

\bibitem{popavaes} S. Popa, S. Vaes: \textit{Cocycle and orbit superrigidity for lattices in SL(n,R) acting on homogeneous spaces},
preprint 2008, math.OA/0810.3630.  To appear in the proceedings of the Conference in Honor of Bob Zimmer's 60th birthday,
``\textit{Geometry, Rigidity and Group Actions}'', Sept. 6-9 2007, University of Chicago.

\bibitem{popavaesII} S. Popa, S. Vaes: \textit{Group measure space decomposition of II$_1$ factors and $W^*$-superrigidity},
preprint 2009, math.OA/0906.2765.

\bibitem{sako} H. Sako: \textit{The class $S$ as an ME invariant},
Int. Math. Res. Not. IMRN  2009,  no. 15, 2749--2759.

\bibitem{sauvageot1} J.-L. Sauvageot:  Tangent bimodules and locality for dissipative operators on $C^*$-algebras.  In:  Quantum Probability and Applications, IV.  Lect. Notes Math., vol. 1396, pp. 322-338.  Springer, Berlin (1989).

\bibitem{sauvageot2} J.-L. Sauvageot:  Quantum Dirichlet forms, differential calculus and semigroups.  In:  In:  Quantum Probability and Applications, V.  Lect. Notes Math., vol. 1442, pp. 334-346.  Springer, Berlin (1990).

\bibitem{schmidtamen}  K. Schmidt: \textit{Amenability, Kazhdan's property (T), strong ergodicity and invariant means for ergodic groups actions}.  Ergodic Th. \& Dynam. Sys. 1 (1981), 223--36.

\bibitem{schmidtcohomology}K. Schmidt: \textit{From infinitely divisible representations to cohomological rigidity},
Analysis, geometry and probability, 173--197, Texts Read. Math., \textbf{10}, Hindustan Book Agency, Delhi, 1996. 

\bibitem{shalom}Y. Shalom:  Measurable group theory, in \textit{European Congress of Mathematics, Stockholm, 2004}.
Editor A. Laptev, European Math. Soc. (2005), 391-424. 

\bibitem{vaes}S. Vaes: \textit{Rigidity results for Bernoulli shifts and their von Neumann algebras (after Sorin Popa)}, 
S«em. Bourbaki, exp. no. 961. Ast«erisque, No. 311 (2007), 237Ð294. 

\bibitem{voiculescusymmetries}D. Voiculescu: Symmetries of some reduced free product $C^\ast$-algebras.  Operator algebras and their connections with topology and ergodic theory (Bu\c{s}teni, 1983),  556--588,
Lecture Notes in Math., \textbf{1132}, Springer, Berlin, 1985.  

\bibitem{voiculescu}D. Voiculescu: \textit{The analogues of entropy and of Fisher's information
measure in free probability theory, V: Noncommutative Hilbert Transforms},  
Invent. Math.   132  (1998),  no.  , 189--227.

\bibitem{voiculescufreeproduct}D. Voiculescu: \textit{The analogues of entropy and of Fisher's information measure in free probability theory. VI. 
Liberation and mutual free information},  
Adv. Math.  146  (1999),  no. 2, 101--166.

\end{thebibliography}
\end{document}